\title{
  Strict Complementarity in MaxCut SDP%
}
\author{Marcel K.\ de Carli Silva}
\address[Marcel K.\ de Carli Silva]{%
  Instituto de Matemática e Estatística, Universidade de São Paulo%
}
\email{mksilva@ime.usp.br}
\thanks{%
  Research of the first author was supported in part by Discovery
  Grants from NSERC and by U.S.~Office of Naval Research under award
  number N00014-12-1-0049, while at the Department of Combinatorics
  and Optimization, University of Waterloo, and by FAPESP
  (Proc.~2013/03447-6), CNPq (Proc.~477203/2012-4), CNPq
  (Proc.~456792/2014-7), and CAPES, while at the Institute of
  Mathematics and Statistics, University of São Paulo.%
}
\author{Levent Tunçel}
\address[Levent Tunçel]{%
  Department of Combinatorics and Optimization, University of Waterloo%
}
\email{ltuncel@uwaterloo.ca}
\thanks{%
  Research of the second author was supported in part by Discovery
  Grants from NSERC and by U.S.~Office of Naval Research under award
  numbers N00014-12-1-0049, N00014-15-1-2171, and~N00014-18-1-2078.
  Part of this work was done while the second author was visiting the
  Simons Institute for the Theory of Computing, supported in part by
  the DIMACS/Simons Collaboration on Bridging Continuous and Discrete
  Optimization through NSF grant \#CCF-1740425.%
}
\date{June 2, 2018}
\begin{document}

\begin{abstract}
  The MaxCut SDP is one of the most well-known semidefinite programs,
  and it has many favorable properties.  One of its nicest
  geometric/duality properties is the fact that the vertices of its
  feasible region correspond exactly to the cuts of a graph, as proved
  by Laurent and Poljak in~1995.  Recall that a boundary point~\(x\) of a
  convex set~\(\Cscr\) is called a vertex of~\(\Cscr\) if the normal
  cone of~\(\Cscr\) at~\(x\) is full-dimensional.

  We study how often strict complementarity holds or fails for the
  MaxCut SDP when a vertex of the feasible region is optimal, i.e.,
  when the SDP relaxation is tight.  While strict complementarity is
  known to hold when the objective function is in the interior of the
  normal cone at any vertex, we prove that it fails generically at the
  boundary of such normal cone.  In this regard, the MaxCut SDP
  displays the nastiest behavior possible for a convex optimization problem.

  We also study strict complementarity with respect to two classes of
  objective functions.  We~show~that, when the objective functions are
  sampled uniformly from the negative semidefinite rank-one matrices
  in the boundary of the normal cone at any vertex, the probability
  that strict complementarity holds lies in \((0,1)\).  We~also extend
  a construction due to Laurent and Poljak of weighted Laplacian
  matrices for which strict complementarity fails.  Their construction
  works for complete graphs, and we extend it to cosums of graphs under
  some mild conditions.
\end{abstract}

\maketitle

\section{Introduction}

Complementary slackness is a fundamental optimality condition, and
hence ubiquitous in optimization.  In~the most general setting of
nonlinear programming (formulated below in the classical language of
nonlinear optimization), it requires a pair \(\pdpair{\xb}{\yb}\) of
primal-dual feasible solutions to an optimization problem
\begin{equation}
  \label{opt-general}
  \min\setst[\big]{
    f(x)
  }{
    g_i(x) \geq 0\,
    \forall i \in \set{1,\dotsc,m}
  }
\end{equation}
and its (Lagrangean) dual to satisfy \(\yb_i g_i(\xb) = 0\) for every
\(i \in [m] \coloneqq \set{1,\dotsc,m}\); that~is, at least one of the
feasibility conditions \(g_i(\xb) \geq 0\) (in the primal) and \(\yb_i
\geq 0\) (in the dual) must be tight, i.e., they cannot both have a
slack.  In this case, we say that \(\pdpair{\xb}{\yb}\) is
\emph{complementary}.  This condition can be stated very conveniently
in structured convex optimization.  For a linear program (LP)
\begin{equation}
  \label{eq:generic-lp}
  \max\setst{
    \iprodt{c}{x}
  }{
    Ax = b,\,
    x \geq 0
  }
\end{equation}
and its dual
{\(
  \min\setst{
    \iprodt{b}{y}
  }{
    s = A^{\transp} y - c,\,
    s \geq 0
  }\),
}
where \(A \in \Reals^{m \times n}\) is a matrix, and \(b \in
\Reals^m\) and \(c \in \Reals^n\) are vectors, a pair
\(\pdpair{\xb}{\dualpair{\yb}{\sbar}}\) of primal-dual feasible
solutions is \emph{complementary} if \(\xb_i\sbar_i = 0\) for every
\(i \in [n]\).  One can similarly consider a semidefinite program
(SDP)
\begin{equation}
  \label{eq:generic-sdp}
  \max\setst{
    \Tr(CX)
  }{
    \Acal(X) = b,\,
    X \succeq 0
  };
\end{equation}
here, as usual, we equip the space~\(\Sym{n}\) of symmetric \(n\)-by-\(n\)
 matrices with the trace inner-product \(\iprod{C}{X} \coloneqq
\Tr(CX^{\transp}) = \sum_{i,j} C_{ij} X_{ij}\), the map \(\Acal \colon
\Sym{n} \to \Reals^m\) is linear, and \(X \succeq 0\) denotes that
\(X \in \Sym{n}\) is positive semidefinite; most of our notation can
be found
in~\Cref{tbl:special-sets,tbl:linear-algebra,tbl:convex,tbl:measure,tbl:graphs}.
The dual SDP is
{\(\min\setst{
    \iprodt{b}{y}
  }{
    S = \Acal^*(y) - C,\,
    S \succeq 0
  }\),
}
where \(\Acal^* \colon \Reals^m \to \Sym{n}\) is the adjoint
of~\(\Acal\), and a pair \(\pdpair{\Xb}{\dualpair{\yb}{\Sb}}\) of primal-dual
feasible solutions is called \emph{complementary} if
\(\Tr(\Xb\Sb) = 0\); equivalently, if \(\Xb\Sb = 0\), since
\(\Xb,\Sb \succeq 0\).

Strict complementarity is a refinement of the notion of complementary
slackness where we require \emph{precisely~one} of the feasibility
conditions involved to be tight, which forces the other one to have a
slack.  A pair \(\pdpair{\xb}{\yb}\) of primal-dual feasible solutions
for the optimization problem in~\eqref{opt-general} and its dual is
\emph{strictly complementary} if \(\yb_i g_i(\xb) = 0\) and \(\yb_i +
g_i(\xb) > 0\) for every \(i \in [m]\).  A pair
\(\pdpair{\xb}{\dualpair{\yb}{\sbar}}\) of primal-dual feasible
solutions for the LP in~\eqref{eq:generic-lp} and its dual is
\emph{strictly complementary} if \(\xb_i \sbar_i = 0\) and \(\xb_i +
\sbar_i > 0\) for every \(i \in [n]\).  Finally, a pair
\(\pdpair{\Xb}{\dualpair{\yb}{\Sb}}\) of primal-dual feasible
solutions for the SDP in~\eqref{eq:generic-sdp} and its dual is
\emph{strictly complementary} if \(\Xb\Sb = 0\) and \(\Xb + \Sb \succ
0\), i.e., \(\Xb + \Sb\) is positive definite.  The latter two notions
can be neatly unified in the context of convex conic optimization via
the concept of faces (see~\cite{Pataki00a}).

Complementary slackness characterizes optimality whenever Strong
Duality holds, in both LPs and SDPs: a~primal-dual pair of feasible
solutions is optimal if and only if it is complementary.  This is
sometimes described by saying that \emph{complementary slackness
  holds} for the (primal-dual pair of) programs.  In the case of LPs,
whenever primal and dual are both feasible, there exists a primal-dual
pair of optimal solutions that is strictly
complementary~\cite{GoldmanT56a}; i.e., \emph{strict complementarity
  holds} for every primal-dual pair of feasible~LPs.  However, there
exist primal-dual pairs of SDPs (which satisfy strong regularity
conditions sufficient for SDP Strong Duality) that have no strictly
complementary primal-dual pair of optimal solutions
(see~\cite{ShapiroS00a}); in~such~cases, we say that \emph{strict
  complementarity fails} for said primal-dual pair of SDPs.  In fact,
failure of strict complementarity is deeply related to failure of
Strong Duality in the context of convex conic
optimization~\cite{TuncelW12a}.

Existence of a strictly complementary pair of optimal solutions for an SDP is
crucial for some key properties of interior-point methods used to
solve such an optimization problem; see, e.g.,
\cite{AlizadehHO98a,JiPS99a,LuoSZ98a,KojimaSS98a} for superlinear convergence
and~\cite{HalickaKR02a} for convergence of the central path to the analytic center of the
optimal face.  Strict complementarity is also very useful in the
identification of optimal faces (in the primal and dual problems), for
detection of infeasibility and unboundedness as well as efficient
recovery of certificates of these \cite{YeTM94a,NesterovTY99a}. Hence,
it is important to determine whether strict complementarity holds for
a given SDP.

It is known that strict complementarity holds generically for
SDPs~\cite{AlizadehHO97a}; for a generalization to convex optimization
problems, see~\cite{PatakiT01a}.  However, there are some generic
properties of LPs that fail in some natural, highly structured
formulations arising in combinatorial optimization.  For instance,
whereas systems of linear inequalities are well-known to be
generically nondegenerate, the natural description of many classical
polytopes is degenerate (e.g., for the matching polytope,
see~\cite[Theorem~25.4]{Schrijver03a}), and ``\ldots most real-world
LP problems are degenerate'' according to~\cite{YeGTZ93a}.  Thus, one
ought to be careful about strict complementarity when approaching
combinatorial optimization problems via SDP relaxations.

In this paper, we study how often strict complementarity holds or
fails for the MaxCut SDP and its dual, when an optimal solution of the
primal occurs at a vertex of its feasible region.  Recall that the
\emph{MaxCut problem} for a given graph \(G = (V,E)\) on \(V = [n]\)
and weight function \(w \colon E \to \Reals\) can be cast as the
optimization problem
{\(
  \max\setst{
    \qform{C}{x}
  }{
    x \in \set{\pm1}^n
  }\),
}
where \(C \in \Sym{n}\) is defined as
\begin{equation}
  \label{eq:1}
  4C
  \coloneqq
  \Lcal_G(w)
  \coloneqq
  \sum_{\set{i,j} \in E} w_{\set{i,j}} \oprodsym{(e_i-e_j)}
\end{equation}
and \(\set{e_1,\dotsc,e_n}\) is the standard basis of~\(\Reals^n\).
The matrix \(\Lcal_G(w)\) is known as (a weighted) \emph{Laplacian}
matrix of~\(G\), and it is simple to check that \(\Lcal_G(w) \succeq
0\) if \(w \geq 0\).  The natural SDP relaxation for this problem is
the following \emph{MaxCut SDP}, which we write along with its dual:
\begin{equation}
  \label{eq:maxcut-sdp}
  \begin{array}[!h]{rl}
    \max & \Tr(CX)           \\
         & \diag(X) = \ones, \\
         & X \succeq 0,
  \end{array}
  \begin{array}[!h]{lrl}
    = & \min & \iprodt{\ones}{y} \\
      &      & S = \Diag(y) - C, \\
      &      & S \succeq 0;
  \end{array}
\end{equation}
here, \(\diag \colon \Sym{n} \to \Reals^n\) extracts the diagonal,
\(\Diag \colon \Reals^n \to \Sym{n}\) is the adjoint of~\(\diag\), and
\(\ones\) is the vector of all-ones.  Strong Duality holds for every
\(C \in \Sym{n}\) since both SDPs have \emph{Slater points}, i.e.,
feasible solutions that are positive definite.

The feasible region of the MaxCut SDP, called the \emph{elliptope} and
denoted by \(\Elliptope{n}\), is a compact convex set in~\(\Sym{n}\)
and its vertices are precisely its elements that are rank-one
matrices~\cite{LaurentP95a}, i.e., matrices of the form
\(\oprodsym{x}\) with \(x \in \set{\pm 1}^n\).  Thus, they correspond
precisely to the \emph{exact} solutions of the MaxCut problem,
for~which the SDP is a relaxation.  The vertices of~\(\Elliptope{n}\)
are by definition the points of~\(\Elliptope{n}\) whose normal cones
are full-dimensional (we postpone the definition of normal cone to
\Cref{sec:vertices}).  It~is known~\cite{CarliT15a} that strict
complementarity holds in~\eqref{eq:maxcut-sdp} precisely when \(C\)
lies in the relative interior of the normal cone of \emph{some} \(X
\in \Elliptope{n}\).  In~particular, if~\(\Xb\) is a vertex
of~\(\Elliptope{n}\), then strict complementarity holds
for~\eqref{eq:maxcut-sdp} whenever \(C\) is in the interior of the
normal cone of~\(\Elliptope{n}\) at~\(\Xb\).  However, when \(C\) lies
in the boundary of this normal cone, it is not clear whether strict
complementarity holds.

In this paper, we prove that, when \(C\) is chosen from the boundary
of the normal cone at a vertex of the elliptope, strict
complementarity almost always fails for~\eqref{eq:maxcut-sdp}; in this
regard, surprisingly, the MaxCut SDP displays the worst possible
behavior for a convex optimization problem.  In~order to make the statement ``almost always fails''
rigorous, we~shall make use of Hausdorff measures.  However, our
treatment is self-contained and it does not require in-depth knowledge of the theory of
Hausdorff measures.

We also focus on two classes of objective functions
for~\eqref{eq:maxcut-sdp}.  We prove that, when \(C\) is sampled
uniformly from (a normalization of) the negative semidefinite rank-one
matrices in the normal cone at a vertex of the elliptope, the
probability that strict complementarity fails
for~\eqref{eq:maxcut-sdp} is in~\((0,1)\).  Naturally, we shall also
use Hausdorff measures to achieve this.  Finally, we also extend a
construction due to Laurent and Poljak~\cite{LaurentP96a}, who proved
that strict complementarity may fail for~\eqref{eq:maxcut-sdp} when
\(C\) is a weighted Laplacian matrix.  Their construction works for
complete graphs, and we extend it to graphs which are cosums where one
of the summands is connected and with some mild condition relating the
maximum eigenvalues of their Laplacians.

The order in which our results are presented is different from what we
described above.  Since the weighted Laplacian construction
generalized from Laurent and Poljak involves only matrix analysis and
spectral graph theory, and no measure theory, we start with that
result.  Only then we shall delve into measure theory tools to prove
the other results.  Hence, the rest of this paper is organized as
follows.  \Cref{sec:prelim} contains some preliminaries, such as
notation and background results about the MaxCut
SDP~\eqref{eq:maxcut-sdp}.  In \Cref{sec:maxcut-sc-failure} we discuss
failure of strict complementarity for~\eqref{eq:maxcut-sdp} using
previous results by Laurent and Poljak and we extend their Laplacian
construction to cosums of graphs.  In~\Cref{sec:generic-sc-failure}, we
develop some Hausdorff measure basics and use them to prove that
strict complementarity fails generically (``almost everywhere'') for
the MaxCut~SDP~\eqref{eq:maxcut-sdp} when the objective function is in
the boundary of the normal cone of a vertex of the~elliptope.
Finally, in \Cref{sec:rank-one2}, we zoom into the set of negative
semidefinite rank-one matrices in the latter boundary, and prove that
in this case the probability that strict complementarity holds is
in~\((0,1)\).

\section{Preliminaries}
\label{sec:prelim}

We refer the reader to
\Cref{tbl:special-sets,tbl:linear-algebra,tbl:convex,tbl:measure,tbl:graphs} for
our mostly standard notation and terminology.  In order to treat
\(\Reals^n\) and \(\Sym{n}\) uniformly, we adopt the language of
Euclidean spaces, i.e., finite-dimensional real vectors spaces
equipped with an inner product.  We denote arbitrary Euclidean spaces
by~\(\Ebb\) and~\(\Ybb\).  We adopt Minkowski's notation; for
instance,
{\(
  \Cscr + \Lambda \Dscr
  \coloneqq
  \setst{
    x + \lambda y
  }{
    x \in \Cscr,\,
    \lambda \in \Lambda,\,
    y \in \Dscr
  }\)
}
for \(\Cscr,\Dscr \subseteq \Ebb\) and \(\Lambda \subseteq \Reals\).
Also, whenever possible we shorten singletons to their single
elements, e.g., we write \(\Reals_+(1 \oplus \Cscr)\) to denote the
conic homogenization of the set \(\Cscr\) in one higher dimensional space.

\bgroup
\renewcommand{\arraystretch}{1.2}
\begin{table}[!ht]
  \caption{Notation for special sets.}
  \centering
  \begin{tabular}{r c p{12cm} }
    \toprule
    \([n]\)
    & \(\coloneqq\) &
    \(\set{1,\dotsc,n}\) for each \(n \in \Naturals\)
    \\
    \(\Powerset{X}\)
    & \(\coloneqq\) &
    the power set of~\(X\)
    \\
    \(\Reals_+\)
    & \(\coloneqq\) &
    \(\setst{x \in \Reals}{x \geq 0}\), the set of nonnegative reals
    \\
    \(\Reals_{++}\)
    & \(\coloneqq\) &
    \(\setst{x \in \Reals}{x > 0}\), the set of positive reals
    \\
    \(\Reals^{n \times n}\)
    & \(\coloneqq\) &
    the space of \(n \times n\) real-valued matrices
    \\
    \(\Sym{n}\)
    & \(\coloneqq\) &
    \(\setst{X \in \Reals^{n \times n}}{X = X^{\transp}}\),
    the space of symmetric \(n \times n\) matrices
    \\
    \(\Psd{n}\)
    & \(\coloneqq\) &
    \(\setst{X \in \Sym{n}}{\qform{X}{h} \geq 0\,\forall h \in \Reals^n}\),
    the cone of \emph{positive semidefinite} matrices
    \\
    \(\Pd{n}\)
    & \(\coloneqq\) &
    \(\setst{X \in \Sym{n}}{\qform{X}{h} > 0\,\forall h \in \Reals^n \setminus \set{0}}\),
    the cone of \emph{positive definite} matrices
    \\
    \(\Elliptope{n}\)
    & \(\coloneqq\) &
    the \emph{elliptope}; see~\eqref{eq:4}
    \\
    \bottomrule
  \end{tabular}
  \label{tbl:special-sets}
\end{table}
\egroup                         

\bgroup
\renewcommand{\arraystretch}{1.2}
\begin{table}[!ht]
  \caption{Notation for linear algebra.}
  \centering
  \begin{tabular}{r c p{12cm} }
    \toprule
    \(\Acal^*\)
    & \(\coloneqq\) &
    the \emph{adjoint} of a linear map~\(\Acal\) between Euclidean spaces
    \\
    \(\Tr(X)\)
    & \(\coloneqq\) &
    \(\sum_{i=1}^n X_{ii}\), the \emph{trace} of \(X \in \Reals^{n \times n}\)
    \\
    \(I\)
    & \(\coloneqq\) &
    the identity matrix in the appropriate space
    \\
    \(\ones\)
    & \(\coloneqq\) &
    the vector of all-ones in the appropriate space
    \\
    \(\set{e_1,\dotsc,e_n}\)
    & \(\coloneqq\) &
    the set of canonical basis vectors of~\(\Reals^n\)
    \\
    \(\Image(A)\)
    & \(\coloneqq\) &
    the range of \(A \in \Reals^{n \times n}\)
    \\
    \(\Null(A)\)
    & \(\coloneqq\) &
    the nullspace of \(A \in \Reals^{n \times n}\)
    \\
    \(\supp(x)\)
    & \(\coloneqq\) &
    \(\setst{i \in [n]}{x_i \neq 0}\),
    the \emph{support} of \(x \in \Reals^n\)
    \\
    \(\diag(X)\)
    & \(\coloneqq\) &
    \(\sum_{i=1}^n X_{ii}e_i\) for each \(X \in \Reals^{n \times n}\)
    so \(\diag \colon \Reals^{n \times n} \to \Reals^n\) extracts the diagonal
    \\
    \(\Diag(x)\)
    & \(\coloneqq\) &
    \(\sum_{i=1}^n x_i \oprodsym{e_i} \in \Reals^{n \times n}\)
    for each \(x \in \Reals^n\),
    so \(\Diag\) is the adjoint of \(\diag\)
    \\
    \(\Cscr^{\perp}\)
    & \(\coloneqq\) &
    \(\setst{x \in \Ebb}{\iprod{x}{s} = 0\,\forall s \in \Cscr}\)
    for each subset \(\Cscr\) of an Euclidean space~\(\Ebb\)
    \\
    \(\oplus\)
    & \(\coloneqq\) &
    the direct sum of two vectors or two sets of vectors
    \\
    \(x \perp y\)
    & \(\coloneqq\) &
    denotes that \(x,y \in \Ebb\) are orthogonal, i.e., \(\iprod{x}{y} = 0\)
    \\
    \(\succeq\)
    & \(\coloneqq\) &
    the \emph{Löwner partial order} on \(\Sym{n}\), i.e., \(A \succeq B \iff A-B \in \Psd{n}\) for \(A,B \in \Sym{n}\)
    \\
    \(\succ\)
    & \(\coloneqq\) &
    the partial order on~\(\Sym{n}\) defined as \(A \succ B \iff A-B \in \Pd{n}\) for \(A,B \in \Sym{n}\)
    \\
    \(\lambda_{\max}(A)\)
    & \(\coloneqq\) &
    the largest eigenvalue of \(A \in \Sym{n}\)
    \\
    \(A^{\MPinv}\)
    & \(\coloneqq\) &
    the Moore-Penrose pseudoinverse of \(A \in \Reals^{m \times n}\);
    see~\cite{HornJ90a}
    \\
    \(\matvec\)
    & \(\coloneqq\) &
    the map that sends a matrix in \(\Reals^{n \times n}\) to a vector
    indexed by \([n] \times [n]\)
    \\
    \bottomrule
  \end{tabular}
  \label{tbl:linear-algebra}
\end{table}
\egroup                         

\bgroup
\renewcommand{\arraystretch}{1.2}
\begin{table}[!ht]
  \caption{Notation for convex analysis on an Euclidean space~\(\Ebb\).}
  \centering
  \begin{tabular}{r c p{12cm} }
    \toprule
    \(\cl(\Cscr)\)
    & \(\coloneqq\) &
    the \emph{closure} of \(\Cscr \subseteq \Ebb\)
    \\
    \(\interior(\Cscr)\)
    & \(\coloneqq\) &
    the \emph{interior} of \(\Cscr \subseteq \Ebb\)
    \\
    \(\ri(\Cscr)\)
    & \(\coloneqq\) &
    the \emph{relative interior} of a convex set \(\Cscr \subseteq \Ebb\)
    \\
    \(\bd(\Cscr)\)
    & \(\coloneqq\) &
    \(\cl(\Cscr) \setminus \interior(\Cscr)\), the \emph{boundary} of \(\Cscr \subseteq \Ebb\)
    \\
    \(\rbd(\Cscr)\)
    & \(\coloneqq\) &
    \(\cl(\Cscr) \setminus \ri(\Cscr)\),
    the \emph{relative boundary} of a convex set \(\Cscr \subseteq \Ebb\)
    \\
    \(\Fscr \faceeq \Cscr\)
    & \(\coloneqq\) &
    denotes that \(\Fscr\) is a face of a convex set \(\Cscr \subseteq \Ebb\); see \Cref{sec:bdstruct}
    \\
    \(\Fscr \faceneq \Cscr\)
    & \(\coloneqq\) &
    denotes that \(\Fscr\) is a proper face of a convex set \(\Cscr \subseteq \Ebb\); see \Cref{sec:bdstruct}
    \\
    \(\Faces(\Cscr)\)
    & \(\coloneqq\) &
    the set of faces of a convex set \(\Cscr \subseteq \Ebb\); see \Cref{sec:bdstruct}
    \\
    \(\Normal{\Cscr}{x}\)
    & \(\coloneqq\) &
    the normal cone of a convex set \(\Cscr \subseteq \Ebb\) at \(x \in \Cscr\); see \eqref{eq:3}
    \\
    \(\Ball\)
    & \(\coloneqq\) &
    the unit ball in the appropriate Euclidean space
    \\
    \(\Ball_{\infty}\)
    & \(\coloneqq\) &
    the unit ball for the \(\infty\)-norm in the appropriate \(\Reals^n\)
    \\
    \bottomrule
  \end{tabular}
  \label{tbl:convex}
\end{table}
\egroup                         

\bgroup
\renewcommand{\arraystretch}{1.2}
\begin{table}[!ht]
  \caption{Notation for the theory of Hausdorff measures in a normed space~\(\Vscr\).}
  \centering
  \begin{tabular}{r c p{12cm} }
    \toprule
    \(H_d(\Xscr)\)
    & \(\coloneqq\) &
    the \(d\)-dimensional Hausdorff outer measure of \(\Xscr \subseteq \Vscr\); see \eqref{eq:15}
    \\
    \(\lambda_d(\Xscr)\)
    & \(\coloneqq\) &
    the \(d\)-dimensional Lebesgue outer measure of \(\Xscr \subseteq \Reals^d\)
    \\
    \(\dim_H(\Xscr)\)
    & \(\coloneqq\) &
    the Hausdorff dimension of \(\Xscr \subseteq \Vscr\); see~\eqref{eq:17}
    \\
    \bottomrule
  \end{tabular}
  \label{tbl:measure}
\end{table}
\egroup                         

\bgroup
\renewcommand{\arraystretch}{1.2}
\begin{table}[!ht]
  \caption{Notation for a graph \(G = (V,E)\).}
  \centering
  \begin{tabular}{r c p{12cm} }
    \toprule
    \(V(G)\)
    & \(\coloneqq\) &
    the vertex set of~\(G\)
    \\
    \(E(G)\)
    & \(\coloneqq\) &
    the edge set of~\(G\)
    \\
    \(\Lcal_G(w)\)
    & \(\coloneqq\) &
    the weighted Laplacian matrix of~\(G\) with weights \(w \in \Reals^E\); see~\eqref{eq:1}
    \\
    \(G \cosum H\)
    & \(\coloneqq\) &
    the cosum of graphs \(G\) and~\(H\); see~\eqref{eq:6}
    \\
    \bottomrule
  \end{tabular}
  \label{tbl:graphs}
\end{table}
\egroup                         

\subsection{Uniqueness of Dual Optimal Solutions}

Delorme and Poljak~\cite{DelormeP93b} proved that the dual SDP
in~\eqref{eq:maxcut-sdp} has a unique optimal solution.  We shall
state a slightly generalized version of their result with some changes
and include a proof for the sake of completeness.
\begin{proposition}[{\cite[Theorem~2]{DelormeP93b}}]
  \label{prop:unique}
  Consider the primal-dual pair of SDPs
  \begin{align}
    \label{eq:unique-primal}
    & \max\setst{
      \Tr(CX)
    }{
      \Acal(X) = b,\,
      X \succeq 0
    }
    \qquad\text{and}
    \\
    \label{eq:unique-dual}
    & \min\setst{
      \iprodt{b}{y}
    }{
      S = \Acal^*(y) - C,\,
      S \succeq 0
    },
  \end{align}
  where \(\Acal \colon \Sym{n} \to \Reals^m\) is a surjective linear
  map, \(C \in \Sym{n}\), and \(b \in \Reals^m\).  Assume there exist
  \(\Xcirc \in \Pd{n}\) and \(\ycirc \in \Reals^m\) such that
  \(\Acal(\Xcirc) = b\) and \(\Acal^*(\ycirc) \in \Pd{n}\).  Suppose
  that, for every nonzero \(y \in \Reals^m\), there exists \(z \in
  \Reals^m\) such that \(\iprodt{b}{z} \neq 0\) and
  \(\Null(\Acal^*(y)) \subseteq \Null(\Acal^*(z))\).  Then
  \eqref{eq:unique-dual} has a unique optimal solution.
\end{proposition}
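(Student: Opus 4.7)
My plan is to argue by contradiction, leveraging Strong Duality (which we get from the primal Slater point $\Xcirc$ and the dual Slater point $\ycirc$ via the assumption that $\Acal^*(\ycirc) \in \Pd{n}$), together with complementary slackness, to turn two distinct dual optima into a violation of the key hypothesis on kernels.

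Suppose for contradiction that $(\yb_1, \Sb_1)$ and $(\yb_2, \Sb_2)$ are two distinct optimal solutions of~\eqref{eq:unique-dual}, and set $d \coloneqq \yb_1 - \yb_2 \neq 0$. Since both are optimal, $\iprodt{b}{d} = 0$, and of course $\Acal^*(d) = \Sb_1 - \Sb_2$. Strong Duality (guaranteed by the Slater assumptions) yields a primal optimal solution $\Xb \succeq 0$ with $\Acal(\Xb) = b$, and complementary slackness gives $\Xb \Sb_i = 0$ for $i = 1,2$ (this uses $\Xb, \Sb_i \succeq 0$ and $\iprod{\Xb}{\Sb_i} = 0$).

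From $\Xb \Sb_i = 0$ we obtain $\Image(\Xb) \subseteq \Null(\Sb_i)$ for $i = 1,2$; consequently, $\Image(\Xb) \subseteq \Null(\Sb_1) \cap \Null(\Sb_2) \subseteq \Null(\Sb_1 - \Sb_2) = \Null(\Acal^*(d))$. Now I invoke the crucial hypothesis: since $d \neq 0$, there exists $z \in \Reals^m$ with $\iprodt{b}{z} \neq 0$ and $\Null(\Acal^*(d)) \subseteq \Null(\Acal^*(z))$. Chaining inclusions, $\Image(\Xb) \subseteq \Null(\Acal^*(z))$, so $\Acal^*(z)\Xb = 0$, and therefore $0 = \iprod{\Acal^*(z)}{\Xb} = \iprod{z}{\Acal(\Xb)} = \iprodt{z}{b}$, contradicting $\iprodt{b}{z} \neq 0$.

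The main work is really bookkeeping: verifying that Slater on both sides gives both Strong Duality and attainment in the primal (so a genuine $\Xb$ exists to plug into the complementary slackness argument), and then carefully chaining the kernel inclusions. There is no hard step, but one should not overlook that $\iprod{\Xb}{\Sb_i} = 0$ together with positive semidefiniteness is genuinely needed to upgrade orthogonality in trace inner product to the matrix product identity $\Xb \Sb_i = 0$, which is what makes the image/kernel inclusion work.
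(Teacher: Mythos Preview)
Your proof is correct, and it takes a genuinely different route from the paper's.

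The paper's argument works entirely on the dual side.  Given two distinct dual optima, it forms their midpoint \((\yb,\Sb)\), uses the kernel hypothesis to find \(\zb\) with \(\Null(\Sb)\subseteq\Null(\Acal^*(\zb))\), and then builds a perturbation direction \(d=\ycirc+\beta\zb\) satisfying \(\iprodt{b}{d}=0\).  A careful eigenvalue estimate shows that \(\St\coloneqq\Acal^*(\yb+\eps d)-C\succeq \eps\Acal^*(\ycirc)\succ 0\) for suitable \(\eps>0\), yielding a strictly feasible dual point at the optimal objective value; one more perturbation then beats the optimum.  No primal optimal solution is ever invoked.

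Your approach instead passes through a primal optimum \(\Xb\) and complementary slackness to get \(\Image(\Xb)\subseteq\Null(\Acal^*(d))\subseteq\Null(\Acal^*(z))\), and then the adjoint identity forces \(\iprodt{b}{z}=0\).  This is shorter and more conceptual, but it needs primal \emph{attainment}, for which you need a dual Slater point.  One small imprecision: \(\ycirc\) itself need not be dual-feasible, since the condition is \(\Acal^*(y)-C\succ 0\), not \(\Acal^*(y)\succ 0\).  The fix is immediate---take \(t\ycirc\) for \(t\) large enough that \(t\Acal^*(\ycirc)-C\succ 0\)---so the argument goes through, but you should state it.  Apart from that, the bookkeeping (in particular the use of \(\Xb,\Sb_i\succeq 0\) to upgrade \(\Tr(\Xb\Sb_i)=0\) to \(\Xb\Sb_i=0\)) is handled correctly.
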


\begin{proof}
  Since \(\Xcirc\) is a Slater point for~\eqref{eq:unique-primal},
  there exists an optimal solution for~\eqref{eq:unique-dual}.
  Suppose for the sake of contradiction that \(\dualpair{y_1}{S_1}\)
  and \(\dualpair{y_2}{S_2}\) are distinct optimal solutions
  for~\eqref{eq:unique-dual}.  Set \(\yb \coloneqq
  \tfrac{1}{2}(y_1+y_2)\) and \(\Sb \coloneqq \Acal^*(\yb) - C =
  \tfrac{1}{2}(S_1+S_2) \succeq 0\).  We have \(\Sb \neq 0\) since \(\Acal\) is
  surjective.  Then \(\dualpair{\yb}{\Sb}\) is also optimal
  in~\eqref{eq:unique-dual}.  Let \(\zb \in \Reals^m\) such that
  \(\iprodt{b}{\zb} \neq 0\) and \(\Null(\Acal^*(y_1-y_2)) \subseteq
  \Null(\Acal^*(\zb))\), which exists by assumption.  Then
  \begin{equation}
    \label{eq:2}
    \Null(\Sb) \subseteq \Null(\Acal^*(\zb));
  \end{equation}
  indeed, if \(h\) lies in \(\Null(\Sb) = \Null(S_1) \cap
  \Null(S_2)\), then we get \(\Acal^*(y_1) h = C h = \Acal^*(y_2) h\),
  whence \(h \in \Null(\Acal^*(y_1-y_2)) \subseteq
  \Null(\Acal^*(\zb))\).

  Define
  \begin{equation*}
    \beta
    \coloneqq
    -
    \dfrac{
      \iprodt{b}{\ycirc}
    }{
      \iprodt{b}{\zb}
    },
    \qquad
    d
    \coloneqq
    \ycirc + \beta \zb,
  \end{equation*}
  and note that \(\iprodt{b}{d} = 0\).  Let \(\mu > 0\) be the
  smallest positive eigenvalue of \(\Sb \in \Psd{n} \setminus
  \set{0}\).  Let \(\norm{\cdot}_2\) denote the operator \(2\)-norm.
  If \(\beta\norm{\Acal^*(\zb)}_2 = 0\), set \(\eps \coloneqq 1\);
  otherwise set
  \begin{gather*}
    \eps \coloneqq \frac{\mu}{\abs{\beta}\norm{\Acal^*(\zb)}_2} > 0.
  \end{gather*}
  Also, set \(\yt \coloneqq \yb + \eps d\) and \(\St \coloneqq
  \Acal^*(\yt) - C\).  Let \(h \in \Reals^n\).  Write \(h = h_1 +
  h_2\) with \(h_1 \in \Null(\Sb)\) and \(h_2 \in
  [\Null(\Sb)]^{\perp}\).  By~\eqref{eq:2} we have
  \begin{equation*}
    \begin{split}
      \qform{\St}{h}
      & =
      \qform{\Sb}{h}
      +
      \eps\qform{\Acal^*(d)}{h}
      \\
      & \geq
      \mu \norm{h_2}^2
      +
      \eps\qform{\Acal^*(\ycirc)}{h}
      +
      \eps\beta\qform{\Acal^*(\zb)}{h}
      \\
      & \geq
      \mu \norm{h_2}^2
      +
      \eps\qform{\Acal^*(\ycirc)}{h}
      -
      \eps\abs{\beta}\norm{\Acal^*(\zb)}_2\norm{h_2}^2
      \\
      & \geq
      \eps\qform{\Acal^*(\ycirc)}{h}.
    \end{split}
  \end{equation*}
  Thus, \(\St \succeq \eps\Acal^*(\ycirc) \succ 0\), so there exists a
  feasible solution for~\eqref{eq:unique-dual} with objective value
  strictly smaller than \(\iprodt{b}{\yt} = \iprodt{b}{\yb}\), a
  contradiction.
\end{proof}

\begin{corollary}[{\cite[Theorem~2]{DelormeP93b}}]
  \label{cor:unique-maxcut}
  The dual SDP in~\eqref{eq:maxcut-sdp} has a unique optimal solution.
\end{corollary}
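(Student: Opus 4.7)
The plan is to apply \Cref{prop:unique} with $\Acal \coloneqq \diag$, $b \coloneqq \ones$, and $C$ as in~\eqref{eq:maxcut-sdp}. Three hypotheses must be checked: surjectivity of $\Acal$, existence of the primal/dual Slater pair $(\Xcirc,\ycirc)$, and the null-space condition on~$\Acal^* = \Diag$.

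Surjectivity of $\diag\colon\Sym{n}\to\Reals^n$ is immediate, since $\diag(\Diag(v)) = v$ for every $v \in \Reals^n$. For the Slater assumption, I would take $\Xcirc \coloneqq I$ and $\ycirc \coloneqq \ones$: then $\Acal(\Xcirc) = \diag(I) = \ones = b$ and $\Acal^*(\ycirc) = \Diag(\ones) = I \in \Pd{n}$.

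The only piece that takes a moment is the null-space condition. Given nonzero $y \in \Reals^n$, I would choose any index $i \in \supp(y)$ and set $z \coloneqq e_i$. Because $\Diag(y)$ is diagonal, $\Null(\Diag(y)) = \Span\setst{e_j}{y_j = 0}$, and since $y_i \neq 0$ this span is contained in $\Span\setst{e_j}{j \neq i} = \Null(\Diag(e_i)) = \Null(\Acal^*(z))$. Moreover $\iprodt{b}{z} = \iprodt{\ones}{e_i} = 1 \neq 0$, so $z$ satisfies both requirements in \Cref{prop:unique}.

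With these three items verified, the proposition delivers uniqueness of the dual optimal solution, which is exactly the corollary. There is no real obstacle here: the MaxCut SDP's special structure — diagonal $\Acal^*$ and strict positivity of $b = \ones$ in every coordinate — makes the null-space hypothesis trivial to witness with a standard basis vector.
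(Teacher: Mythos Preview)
Your proof is correct and follows the same approach as the paper: apply \Cref{prop:unique} with \(\Xcirc = I\), \(\ycirc = \ones\), and verify the null-space hypothesis. The only cosmetic difference is the witness~\(z\): the paper takes \(z_i \coloneqq \abs{y_i}\) (so that \(\Null(\Diag(y)) = \Null(\Diag(z))\) exactly and \(\iprodt{\ones}{z} > 0\)), whereas you take \(z = e_i\) for some \(i \in \supp(y)\); both choices work equally well.
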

\begin{proof}
  We shall apply Proposition~\ref{prop:unique}
  to~\eqref{eq:maxcut-sdp}.  Let us see that the map \(\Acal \coloneqq
  \diag\) satisfies the required properties.  Take \(\Xcirc \coloneqq
  I\) and \(\ycirc \coloneqq \ones\).  Let \(y \in \Reals^n\) be
  nonzero.  Define \(z \in \Reals^n\) as \(z_i \coloneqq \abs{y_i}\)
  for every \(i \in [n]\), and note that \(\Null(\Diag(y)) =
  \Null(\Diag(z))\) and that \(\iprodt{\ones}{z} > 0\) since \(y \neq
  0\).
\end{proof}

\subsection{Vertices of the Elliptope}
\label{sec:vertices}

Let \(\Cscr\) be a convex set in an Euclidean space~\(\Ebb\).  The
\emph{normal cone} of~\(\Cscr\) at \(\xb \in \Cscr\) is
\begin{equation}
  \label{eq:3}
  \Normal{\Cscr}{\xb}
  \coloneqq
  \setst{
    a \in \Ebb
  }{
    \iprod{a}{x} \leq \iprod{a}{\xb}\,
    \forall x \in \Cscr
  },
\end{equation}
i.e., it is the set of all normals to supporting halfspaces
of~\(\Cscr\) at~\(\xb\).  Note that we are identifying the dual
space~\(\Ebb^*\) of~\(\Ebb\) with~\(\Ebb\).  We say that \(\xb \in
\Cscr\) is a \emph{vertex} of~\(\Cscr\) if \(\Normal{\Cscr}{\xb}\) is
full-dimensional.  The set of vertices of the \emph{elliptope}
\begin{equation}
  \label{eq:4}
  \Elliptope{n}
  \coloneqq
  \setst{X \in \Psd{n}}{\diag(X) = \ones}
\end{equation}
was determined by Laurent and Poljak~\cite{LaurentP95a}:
\begin{theorem}[{\cite[Theorem~2.5]{LaurentP95a}}]
  \label{thm:vertices-elliptope}
  The set vertices of~\(\Elliptope{n}\) is
  \(\setst[\big]{\oprodsym{x}}{x \in \set{\pm1}^n}\).
\end{theorem}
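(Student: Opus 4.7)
The plan is to first characterize $\Normal{\Elliptope{n}}{\bar{X}}$ for an arbitrary $\bar{X} \in \Elliptope{n}$ using SDP duality, and then decide full-dimensionality by a dimension count together with a small perturbation argument. For the duality step, a matrix $A \in \Sym{n}$ lies in $\Normal{\Elliptope{n}}{\bar{X}}$ iff $\bar{X}$ is optimal for $\max\setst{\iprod{A}{X}}{X \in \Elliptope{n}}$. Since the identity is a primal Slater point, strong duality applies and yields $y \in \Reals^n$ and $S \in \Psd{n}$ with $A = \Diag(y) - S$ and $S\bar{X} = 0$. Conversely, any such $A$ satisfies $\iprod{A}{X} = \iprodt{\ones}{y} - \iprod{S}{X} \leq \iprodt{\ones}{y}$ for every $X \in \Elliptope{n}$, with equality at $\bar{X}$. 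Hence $\Normal{\Elliptope{n}}{\bar{X}} = \Diag(\Reals^n) + \setst{-S}{S \in \Psd{n},\, S\bar{X} = 0}$.

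For the forward direction, a rank-one element of $\Elliptope{n}$ is necessarily of the form $\oprodsym{x}$ with $x \in \set{\pm 1}^n$, since $\diag(\oprodsym{x}) = \ones$ forces $x_i^2 = 1$, and the kernel condition becomes $Sx = 0$. The crucial observation is the direct-sum decomposition $\Sym{n} = \Diag(\Reals^n) \oplus \setst{T \in \Sym{n}}{Tx = 0}$: the intersection is trivial because $\Diag(y)x = 0$ together with $x_i \in \set{\pm 1}$ forces $y = 0$, and the summands have complementary dimensions $n$ and $\binom{n}{2}$. I would then take the witness $S_0 \coloneqq I - \tfrac{1}{n}\oprodsym{x}$, which is PSD with $S_0 x = 0$ and positive definite on $x^\perp$. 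For any small $B \in \Sym{n}$, uniquely decomposing $B = \Diag(y_B) + T_B$ with $T_B x = 0$ rewrites $-S_0 + B = \Diag(y_B) - (S_0 - T_B)$; since $(S_0 - T_B)x = 0$ and $T_B$ lives on $x^\perp$ where $S_0$ is positive definite, $S_0 - T_B \succeq 0$ for all sufficiently small $B$. Hence $-S_0$ is an interior point of $\Normal{\Elliptope{n}}{\bar{X}}$, and $\bar{X}$ is a vertex.

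For the converse direction, suppose $\bar{X} \in \Elliptope{n}$ has rank $r \geq 2$; the plan is a pure dimension count. By the first step, $\Normal{\Elliptope{n}}{\bar{X}} \subseteq \Diag(\Reals^n) + \setst{T \in \Sym{n}}{T\bar{X} = 0}$. A symmetric $T$ satisfies $T\bar{X} = 0$ iff $\Image(T) \subseteq \Null(\bar{X})$, so this subspace has dimension $\binom{n-r+1}{2}$. The intersection with $\Diag(\Reals^n)$ is again trivial because $\Diag(y)\bar{X} = 0$ together with $\bar{X}_{ii} = 1$ forces $y = 0$. Thus the enclosing subspace has dimension $n + \binom{n-r+1}{2}$, which is strictly smaller than $\binom{n+1}{2} = \dim \Sym{n}$ whenever $r \geq 2$, so $\Normal{\Elliptope{n}}{\bar{X}}$ cannot be full-dimensional and $\bar{X}$ is not a vertex.

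The main delicate step is the perturbation argument for the rank-one case: one must simultaneously maintain positive semidefiniteness of $S_0 - T_B$ and the linear equation $(S_0 - T_B)x = 0$ under perturbation of $A$. The direct-sum decomposition of $\Sym{n}$ is what makes this possible, because it converts a perturbation of $A$ into independent perturbations of $y$ and of $S$ within $\setst{T \in \Sym{n}}{Tx = 0}$, and the positive definiteness of $S_0$ on $x^\perp$ provides the slack needed to stay inside $\Psd{n}$.
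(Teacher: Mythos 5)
Your proof is correct, but there is nothing in the paper to compare it against: Theorem~\ref{thm:vertices-elliptope} is imported from Laurent and Poljak~\cite{LaurentP95a}, and the paper states it without proof. That said, your argument is self-contained and meshes well with machinery the paper \emph{does} develop elsewhere: your first step re-derives the normal-cone formula~\eqref{eq:normal} via conic duality and complementary slackness (using the Slater point \(I\) for dual attainment, and \(\Tr(S\Xb)=0 \iff S\Xb=0\) for \(S,\Xb \succeq 0\)), and your direct-sum decomposition \(\Sym{n} = \Diag(\Reals^n) \oplus \setst{T \in \Sym{n}}{Tx = 0}\) at a rank-one point is precisely the content of Lemma~\ref{le:unique}. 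Both halves check out. For \(\Xb = \oprodsym{x}\) with \(x \in \set{\pm1}^n\), the projector \(S_0 = I - \tfrac{1}{n}\oprodsym{x}\) is positive definite on \(x^{\perp}\); the decomposition \(B = \Diag(y_B) + T_B\) is a linear isomorphism, so \(T_B\) is small when \(B\) is, and since \(T_B\) is symmetric with \(T_B x = 0\) it leaves \(x^{\perp}\) invariant, giving \(S_0 - T_B \succeq 0\) once \(\norm{T_B}_2 < 1\); hence \(-S_0\) is interior to the normal cone. For \(\rank(\Xb) = r \geq 2\), the containment of \(\Normal{\Elliptope{n}}{\Xb}\) in the subspace \(\Diag(\Reals^n) + \setst{T \in \Sym{n}}{\Image(T) \subseteq \Null(\Xb)}\), of dimension at most \(n + \binom{n-r+1}{2} < \binom{n+1}{2}\), settles the converse (the directness of that sum is not even needed for the upper bound). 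The only point I would make fully explicit in a write-up is the surjectivity of \(T \mapsto Tx\) on \(\Sym{n}\), which justifies the dimension count \(\binom{n}{2}\) for \(\setst{T \in \Sym{n}}{Tx=0}\).
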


An \emph{automorphism} of~\(\Elliptope{n}\) is a nonsingular linear
operator~\(\Tcal\) on~\(\Sym{n}\) that preserves~\(\Elliptope{n}\),
i.e., \(\Tcal(\Elliptope{n}) = \Elliptope{n}\).  For \(s \in
\set{\pm1}^n\), the map \(X \in \Sym{n} \mapsto \Diag(s) X \Diag(s)\)
is easily checked to be an automorphism of~\(\Elliptope{n}\).  If
\(x,y \in \set{\pm1}^n\), then \(y = \Diag(s) x\) for \(s \in
\set{\pm1}^n\) defined by \(s_i \coloneqq x_i y_i\) for each \(i \in
[n]\).  Hence, any vertex of~\(\Elliptope{n}\) can be mapped into the
vertex \(\oprodsym{\ones}\) by an automorphism of~\(\Elliptope{n}\);
i.e., the automorphism group of~\(\Elliptope{n}\) acts transitively on
the vertices of~\(\Elliptope{n}\).  This allows us to prove many
linear properties about the vertices of~\(\Elliptope{n}\) by just
proving them for the vertex~\(\oprodsym{\ones}\).  \emph{We shall make
  extensive use of this fact without further mention.}

Laurent and Poljak~\cite{LaurentP95a} also provided formulas for the
normal cones of the elliptope.  Here we shall use slightly different
formulas from~\cite[Proposition~2.1]{CarliT15a}:
\begin{equation}
  \label{eq:normal}
  \begin{split}
    \Normal{\Elliptope{n}}{X}
    & =
    \Image(\Diag) - (\Psd{n} \cap \set{X}^{\perp})
    \\
    & =
    \Image(\Diag) - \setst{Y \in \Psd{n}}{\Image(Y) \subseteq \Null(X)}
  \end{split}
  \qquad
  \forall X \in \Elliptope{n}.
\end{equation}
When \(\Xb\) is a vertex of~\(\Elliptope{n}\), every element of
\(\Normal{\Elliptope{n}}{\Xb}\) can be described in a unique way as an
element of the Minkowski sum at the RHS of~\eqref{eq:normal}:
\begin{lemma}
  \label{le:unique}
  Let \(\Xb\) be a vertex of~\(\Elliptope{n}\).  Let \(\yb,\yt \in
  \Reals^n\) and \(\Sb,\St \in \Psd{n} \cap \set{\Xb}^{\perp}\) be
  such that \(\Diag(\yb) + \Sb = \Diag(\yt) + \St\).  Then \(\yb =
  \yt\) and \(\Sb = \St\).
\end{lemma}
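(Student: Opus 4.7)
The plan is to exploit the transitive action of the automorphism group of $\Elliptope{n}$ on its vertices to reduce to the vertex $\Xb = \oprodsym{\ones}$, and then to pin down the structure of elements of $\Psd{n} \cap \set{\Xb}^\perp$.

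First I would observe that the conclusion is purely linear and is preserved by the automorphisms $X \mapsto \Diag(s)X\Diag(s)$ with $s \in \set{\pm1}^n$: such an automorphism sends $\Diag(y)$ to $\Diag(y)$ (since $s_i^2 = 1$) and preserves $\Psd{n}$ and the orthogonal complement condition $\iprod{X}{\cdot} = 0$. Hence, by the paragraph preceding the lemma, it suffices to treat $\Xb = \oprodsym{\ones}$.

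Next I would pass to the difference, setting $y \coloneqq \yb - \yt \in \Reals^n$ and $S \coloneqq \St - \Sb \in \Sym{n}$, so that the hypothesis becomes $\Diag(y) = S$. The key structural fact is that, for the specific vertex $\Xb = \oprodsym{\ones}$, the condition $M \in \Psd{n} \cap \set{\Xb}^\perp$ is equivalent to $M \succeq 0$ together with $\iprodt{\ones}{M\ones} = 0$, which for a PSD matrix forces $M\ones = 0$. Applying this to $\Sb$ and $\St$ gives $\Sb\ones = \St\ones = 0$, hence $S\ones = 0$.

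Finally, plugging $S = \Diag(y)$ into $S\ones = 0$ yields $y = \Diag(y)\ones = 0$, and therefore $S = 0$, i.e., $\yb = \yt$ and $\Sb = \St$. There is no real obstacle here; the only subtlety is verifying cleanly that the conditions involved are invariant under the relevant automorphisms so that the reduction to $\Xb = \oprodsym{\ones}$ is legitimate, after which the argument reduces to the elementary fact that a diagonal matrix whose row sums vanish must be zero.
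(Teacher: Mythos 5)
Your proof is correct and follows essentially the same route as the paper's: reduce to $\Xb = \oprodsym{\ones}$ via the automorphism group, deduce $\Sb\ones = \St\ones = 0$ from positive semidefiniteness together with orthogonality to $\oprodsym{\ones}$, and then apply the identity to $\ones$ to conclude $\yb = \yt$ and hence $\Sb = \St$. The only cosmetic difference is that you pass to the differences $y$ and $S$ first, which changes nothing of substance.
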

\begin{proof}
  We may assume that \(\Xb = \oprodsym{\ones}\).  Then \(\Sb \in
  \Psd{n} \cap \set{\oprodsym{\ones}}^{\perp}\) implies that \(\Sb
  \ones = 0\).  Analogously, \(\St \ones = 0\).  Thus \(\yb =
  \Diag(\yb)\ones = (\Diag(\yb)+\Sb)\ones = (\Diag(\yt)+\St)\ones =
  \Diag(\yt)\ones = \yt\), so \(\Sb = \St\).
\end{proof}

\section{Failure of Strict Complementarity with Laplacian Objectives}
\label{sec:maxcut-sc-failure}

Existence of strictly complementary optimal solutions is known to be
equivalent to membership of the objective vector in the relative
interior of some normal cone:
\begin{proposition}[{\cite[Proposition~4.2]{CarliT15a}}]
  \label{prop:sc-ri}
  If the feasible region \(\Cscr\) of the SDP~\eqref{eq:generic-sdp} has a
  positive definite matrix, then strict complementarity holds
  for~\eqref{eq:generic-sdp} and its dual if and only if \(C \in
  \ri(\Normal{\Cscr}{X})\) for some \(X \in \Cscr\).
\end{proposition}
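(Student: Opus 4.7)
The plan is to characterize both sides explicitly via the structure of the normal cone $\Normal{\Cscr}{X}$ and then match them. First I would establish the decomposition
\[
\Normal{\Cscr}{\Xb} = \Image(\Acal^*) - \bigl(\Psd{n} \cap \set{\Xb}^{\perp}\bigr)
\]
for every $\Xb \in \Cscr$, directly generalizing~\eqref{eq:normal}. This follows from the sum rule $\Normal{\Cscr}{\Xb} = \Normal{\Acal^{-1}(b)}{\Xb} + \Normal{\Psd{n}}{\Xb}$, whose applicability is precisely ensured by the primal Slater hypothesis; the first summand equals the subspace $\Image(\Acal^*)$, and $\Normal{\Psd{n}}{\Xb} = -(\Psd{n} \cap \set{\Xb}^{\perp})$ is standard.

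Next I would apply the identity $\ri(A+B) = \ri(A) + \ri(B)$ for convex sets together with $\ri(\Image(\Acal^*)) = \Image(\Acal^*)$ to get
\[
\ri(\Normal{\Cscr}{\Xb}) = \Image(\Acal^*) - \ri\bigl(\Psd{n} \cap \set{\Xb}^{\perp}\bigr).
\]
The relative interior of the face $\Psd{n} \cap \set{\Xb}^{\perp}$ consists of those $S$ of maximal rank inside it, equivalently of $S \succeq 0$ with $\Image(S) = \Null(\Xb)$, or equivalently still $\Xb + S \succ 0$.

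Combining the two descriptions, $C \in \ri(\Normal{\Cscr}{\Xb})$ unpacks to the existence of $y \in \Reals^m$ and $S \succeq 0$ with $C = \Acal^*(y) - S$, $\Xb S = 0$, and $\Xb + S \succ 0$; that is, to the existence of a dual feasible pair $(y,S)$ strictly complementary to the primal feasible~$\Xb$. To close the equivalence I would use the Slater-enabled strong duality: any primal feasible $\Xb$ together with a dual feasible $(y,S)$ satisfying $\Xb S = 0$ automatically forms an optimal pair, and conversely every strictly complementary primal-dual optimal pair fits the template above.

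The main obstacle is justifying the normal-cone sum rule, which is where the Slater hypothesis is used; once the decomposition is in place, the rest reduces to routine bookkeeping with relative interiors and the face structure of~$\Psd{n}$.
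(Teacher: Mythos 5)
Your proposal is correct, and it is essentially the intended argument: the paper does not prove this proposition itself but cites \cite[Proposition~4.2]{CarliT15a}, and your decomposition \(\Normal{\Cscr}{\Xb} = \Image(\Acal^*) - (\Psd{n} \cap \set{\Xb}^{\perp})\) via the normal-cone sum rule under the Slater hypothesis, followed by \(\ri\) of a Minkowski sum and the identification \(\ri(\Psd{n} \cap \set{\Xb}^{\perp}) = \setst{S \in \Psd{n}}{\Image(S) = \Null(\Xb)}\), is exactly the machinery the paper records for the elliptope special case in~\eqref{eq:normal} and~\eqref{eq:5}. The only steps worth spelling out in a full write-up are the two directions of the final equivalence: a feasible complementary pair is automatically optimal by weak duality (no constraint qualification needed), and for \(S \succeq 0\) with \(\Xb S = 0\) one has \(\Xb + S \succ 0 \iff \Image(S) = \Null(\Xb)\); both are routine and you have them right.
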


Hence, strict complementarity is locally generic when the objective
function is chosen in the normal cone of a given feasible solution;
see~\cite[Corollary~4.3]{CarliT15a}.

By~\eqref{eq:normal} and standard convex analysis,
\begin{equation}
  \label{eq:5}
  \begin{split}
    \ri(\Normal{\Elliptope{n}}{X})
    & =
    \Image(\Diag) - \ri(\Psd{n} \cap \set{X}^{\perp})
    \\
    & =
    \Image(\Diag) - \setst{Y \in \Psd{n}}{\Image(Y) = \Null(X)}
  \end{split}
  \qquad
  \forall X \in \Elliptope{n}.
\end{equation}
When \(\Xb\) is a vertex of~\(\Elliptope{n}\), we may
combine~\eqref{eq:normal} with \eqref{eq:5} and Lemma~\ref{le:unique}
to conclude that
\begin{equation}
  \label{eq:bd-normal-vx}
  \begin{split}
    \bd(\Normal{\Elliptope{n}}{\Xb})
    & =
    \Image(\Diag) - \rbd(\Psd{n} \cap \set{\Xb}^{\perp})
    \\
    & =
    \Image(\Diag) - \setst{Y \in \Psd{n}}{\Image(Y) \subsetneq \Null(\Xb)}.
  \end{split}
\end{equation}

In~\cite{CarliT15a}, we noted that strict complementarity holds
for~\eqref{eq:maxcut-sdp} for every \(C\) of the form \(C =
\tfrac{1}{4} \Lcal_G(w)\) with \(w \geq 0\) provided that the
polar~\(\Elliptope{n}^{\circ} \coloneqq \setst{Y \in \Sym{n}}{\Tr(YX)
  \leq 1 \,\forall X \in \Elliptope{n}}\) of the elliptope is facially
exposed, and we (implicitly) asked whether the latter holds.  It turns
out, Laurent and Poljak~\cite[Example~5.10]{LaurentP96a} showed, even
before we raised the question, in a different context and using a
slightly different terminology, that strict complementarity may fail
for~\eqref{eq:maxcut-sdp} for every \(n \geq 3\), hence answering the
question in the negative.  For each complete graph \(G = K_n\) with
\(n \geq 3\), they provided a weight function \(w \geq 0\) for which
strict complementarity fails for~\eqref{eq:maxcut-sdp} with \(C =
\tfrac{1}{4} \Lcal_G(w)\).

We generalize their construction showing that strict complementarity
may fail with a weighted Laplacian objective for graphs which are
cosums, with mild conditions on the (co-)summands.  Recall that, if
\(G = (V,E)\) and \(H = (U,F)\) are graphs such that \(V \cap U =
\emptyset\), the \emph{cosum} of~\(G\) and~\(H\) is the graph
\begin{equation}
  \label{eq:6}
  G \cosum H
  \coloneqq
  (V \cup U, E \cup F \cup \setst{\set{v,u}}{(v,u) \in V \times U}).
\end{equation}
We shall use a characterization of positive semidefinite matrices
partitioned in blocks using Schur complements and the Moore-Penrose
pseudoinverse:
\begin{lemma}[{see~\cite[Theorem~4.3]{Gallier10a}}]
  For \(A \in \Sym{m}\), \(C \in \Sym{n}\), and \(B \in \Reals^{m
    \times n}\), we have
  \begin{equation}
    \label{eq:7}
    \begin{bmatrix}
      A           & B \\
      B^{\transp} & C \\
    \end{bmatrix}
    \succeq 0
    \iff
    A \succeq 0,
    \quad
    (I-AA^{\MPinv})B = 0,
    \quad
    \text{and }
    C \succeq B^{\transp}A^{\MPinv}B.
  \end{equation}
\end{lemma}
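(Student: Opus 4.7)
My plan is to establish both directions through a single block congruence identity, extracting the three conditions in the forward direction before invoking the factorization. Specifically, whenever the range condition $(I - AA^{\MPinv})B = 0$ holds, equivalently $\Image(B) \subseteq \Image(A)$, one has $AA^{\MPinv}B = B$ and hence
\[
\begin{bmatrix} A & B \\ B^{\transp} & C \end{bmatrix}
=
\begin{bmatrix} I & 0 \\ B^{\transp}A^{\MPinv} & I \end{bmatrix}
\begin{bmatrix} A & 0 \\ 0 & C - B^{\transp}A^{\MPinv}B \end{bmatrix}
\begin{bmatrix} I & A^{\MPinv}B \\ 0 & I \end{bmatrix}.
\]
Since this is a congruence by a nonsingular matrix, positive semidefiniteness is preserved, so once the range condition is in hand, the block matrix is PSD if and only if both $A$ and the Schur complement $C - B^{\transp}A^{\MPinv}B$ are PSD.

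For the forward direction, I would first derive $A \succeq 0$ and the range condition independently. Plugging test vectors of the form $h \oplus 0$ into the quadratic form of the block matrix immediately gives $A \succeq 0$. To obtain the range condition, I would pick an arbitrary $v \in \Null(A)$ and evaluate the quadratic form at $(tv) \oplus w$ for scalar $t \in \Reals$ and arbitrary $w$; since $\qform{A}{v} = 0$, the quadratic form reduces to $2t\,v^{\transp}Bw + \qform{C}{w}$, which is affine in~$t$ and must remain nonnegative for all $t \in \Reals$, forcing $v^{\transp}Bw = 0$. Since $w$ is arbitrary, $B^{\transp}v = 0$, giving $\Null(A) \subseteq \Null(B^{\transp})$, equivalently $\Image(B) \subseteq \Image(A)$. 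Applying the block factorization above, the block-diagonal matrix is congruent to a PSD matrix and therefore PSD, which forces its bottom block $C - B^{\transp}A^{\MPinv}B$ to be PSD.

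The backward direction is then immediate: granted $A \succeq 0$, the range condition, and $C \succeq B^{\transp}A^{\MPinv}B$, the same factorization expresses the block matrix as a congruence applied to a PSD block-diagonal matrix, so it is PSD.

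The main subtlety is handling the Moore--Penrose pseudoinverse cleanly in the singular case. One must use the standard facts (see~\cite{HornJ90a}) that $AA^{\MPinv}$ is the orthogonal projector onto $\Image(A)$ so $(I - AA^{\MPinv})B = 0$ really is equivalent to $\Image(B) \subseteq \Image(A)$, that $A^{\MPinv}$ is symmetric, and that $A A^{\MPinv} A = A$, which together make the three matrix products appearing in the factorization collapse correctly to reproduce the original block matrix. Once these identities are verified, the argument above goes through without ever needing $A$ to be invertible.
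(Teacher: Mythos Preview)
Your argument is correct and is the standard Schur-complement congruence proof. The paper itself does not prove this lemma at all; it merely states it with a citation to~\cite[Theorem~4.3]{Gallier10a}, so there is nothing to compare against beyond noting that your proof is precisely the classical one found in that reference and elsewhere.
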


\begin{theorem}
  \label{thm:cosum}
  Let \(G\) and~\(H\) be graphs with \(n_G \geq 1\) and \(n_H \geq 1\)
  vertices, respectively.  Let \(w_G \colon E(G) \to \Reals_{++}\) and
  \(w_H \colon E(H) \to \Reals_{++}\) be weight functions, and denote
  the respective weighted Laplacians by \(L_G \coloneqq \Lcal_G(w_G)\)
  and \(L_H \coloneqq \Lcal_H(w_H)\).  Set \(\mu_G \coloneqq
  \lambda_{\max}(L_G)\) and \(\mu_H \coloneqq \lambda_{\max}(L_H)\).
  Suppose that \(n_G \mu_G > n_H \mu_H\) and that \(H\) is connected.
  Define \(\wb \colon E(G \cosum H) \to \Reals_{++}\) as \(\wb
  \coloneqq w_G \oplus w_H \oplus \alpha \ones\) where \(\alpha
  \coloneqq \mu_G/n_H\).  For enhanced clarity denote the vectors of
  all-ones in \(\Reals^{V(G)}\) and~\(\Reals^{V(H)}\) by \(\ones_G\)
  and \(\ones_H\), respectively.  Then the unique pair of primal-dual
  optimal solutions for~\eqref{eq:maxcut-sdp} with \(4C \coloneqq
  \Lcal_{G \cosum H}(\wb)\) is \(\pdpair{X^*}{\dualpair{y^*}{S^*}}\)
  where
  \begin{equation}
    X^* \coloneqq
    \oprodsym{
      \begin{bmatrix*}[r]
        -\ones_G\, \\
        \ones_H\,  \\
      \end{bmatrix*}
    }
    \quad
    y^* \coloneqq 2\alpha
    \begin{bmatrix*}[r]
      \,n_H \ones_G\, \\
      \,n_G \ones_H\, \\
    \end{bmatrix*},
    \quad
    S^* \coloneqq
    \begin{bmatrix}
      \mu_G I - L_G                   & \alpha \oprod{\ones_G}{\ones_H} \\
      \alpha \oprod{\ones_H}{\ones_G} & \alpha n_G I - L_H              \\
    \end{bmatrix}.
  \end{equation}
  In particular, since \((X^*+S^*)(h \oplus 0) = 0\) for any
  \(\mu_G\)-eigenvector~\(h\) of~\(L_G\), there is no strictly
  complementary pair of primal-dual optimal solutions
  for~\eqref{eq:maxcut-sdp}.
\end{theorem}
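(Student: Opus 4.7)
The plan is to verify directly that $\pdpair{X^*}{\dualpair{y^*}{S^*}}$ is a primal-dual feasible complementary pair, which by strong duality makes both optimal; then handle uniqueness and the failure of strict complementarity. Primal feasibility of $X^*$ is immediate since it is $\oprodsym{x}$ for $x \in \set{\pm 1}^{n_G+n_H}$. For dual feasibility, a block-wise computation verifies the required identity among $S^*$, $\Diag(y^*)$, and $\Lcal_{G \cosum H}(\wb)$: the $V(G)$-diagonal of $\Lcal_{G \cosum H}(\wb)$ is $\diag(L_G) + \alpha n_H \ones_G$ (the $\alpha n_H$ term coming from the bipartite edges), which combined with the top-left block $\mu_G I - L_G$ of $S^*$ gives $(\mu_G + \alpha n_H)\ones_G = 2\mu_G \ones_G = y^*|_{V(G)}$; the $V(G) \times V(H)$ off-diagonal of $\Lcal_{G \cosum H}(\wb)$ is $-\alpha \oprod{\ones_G}{\ones_H}$, matching the corresponding block of $S^*$.

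The main analytic point is $S^* \succeq 0$, for which I would invoke the Schur complement characterization in~\eqref{eq:7}. The top-left block $\mu_G I - L_G$ is positive semidefinite by definition of $\mu_G$. Since $L_G \ones_G = 0$, we have $(\mu_G I - L_G)\ones_G = \mu_G \ones_G$; the hypothesis $n_G \mu_G > n_H \mu_H \geq 0$ forces $\mu_G > 0$, so $\ones_G \in \Image(\mu_G I - L_G)$ (yielding the range condition) and $(\mu_G I - L_G)^{\MPinv}\ones_G = \ones_G/\mu_G$. The Schur complement then reduces to $\alpha n_G I - L_H - (\alpha n_G/n_H) \oprodsym{\ones_H} \succeq 0$; decomposing $v \in \Reals^{V(H)}$ as $v = c\ones_H + v_\perp$ with $v_\perp \perp \ones_H$ and using $L_H \ones_H = 0$, the associated quadratic form simplifies to $\alpha n_G \norm{v_\perp}^2 - \qform{L_H}{v_\perp}$, which is nonnegative because $H$ connected gives $\lambda_{\max}(L_H|_{\ones_H^\perp}) = \mu_H$ while $\alpha n_G = \mu_G n_G/n_H > \mu_H$.

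Complementarity $X^* S^* = 0$ reduces to $S^*\,[-\ones_G;\ones_H] = 0$, a direct block computation using $L_G\ones_G = L_H\ones_H = 0$. Dual uniqueness is \Cref{cor:unique-maxcut}. For primal uniqueness, I would first characterize $\Null(S^*)$: any $(u,w) \in \Null(S^*)$, upon decomposing $w = c\ones_H + w_\perp$ and invoking positive definiteness of $\alpha n_G I - L_H$ on $\ones_H^\perp$ (from the same strict inequality as above), is forced into the form $c[\ones_G;-\ones_H] + [h;0]$ with $L_G h = \mu_G h$; such $h$ satisfy $\ones_G^T h = 0$ automatically since $\mu_G > 0$. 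Any primal optimal $X$ satisfies $\Image(X) \subseteq \Null(S^*)$, so writing $X = \sum_i \lambda_i v_i v_i^T$ with $v_i = c_i[\ones_G;-\ones_H] + [h_i;0]$, the $V(H)\times V(H)$ block of $X$ equals $(\sum_i \lambda_i c_i^2)\oprodsym{\ones_H}$ and the diagonal constraint pins $\sum_i \lambda_i c_i^2 = 1$. A pointwise inspection of the $V(G)$-diagonal constraint yields $H(v,v) = -2b(v)$ for all $v \in V(G)$, where $b := \sum_i \lambda_i c_i h_i$ is a $\mu_G$-eigenvector (hence $b \perp \ones_G$) and $H := \sum_i \lambda_i \oprodsym{h_i} \succeq 0$; combining $b(v) \leq 0$ pointwise with $\sum_v b(v) = 0$ forces $b = 0$, hence $H$ has zero diagonal and thus $H = 0$, whence $X = X^*$.

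Finally, for the failure of strict complementarity, a direct check shows $X^*(h \oplus 0) = 0$ (as $\ones_G^T h = 0$) and $S^*(h \oplus 0) = 0$ (top block $(\mu_G I - L_G)h = 0$, bottom block $\alpha(\ones_G^T h)\ones_H = 0$) for any $\mu_G$-eigenvector $h$ of $L_G$; thus $h \oplus 0 \in \Null(X^* + S^*)$ and $X^* + S^*$ is singular. I expect the most delicate step to be the primal uniqueness argument, which requires carefully combining the kernel structure of $S^*$ with the diagonal feasibility constraints and the orthogonality of every $\mu_G$-eigenvector of $L_G$ to $\ones_G$.
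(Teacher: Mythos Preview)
Your argument is correct, and for feasibility, complementarity, dual uniqueness, and the Schur-complement verification of $S^*\succeq 0$ it is essentially identical to the paper's proof. The one genuine difference is the primal-uniqueness step. The paper works directly with the block equation $XS^*=0$: testing the bottom-right block against eigenvectors of $L_H$ (and using that $H$ is connected) forces $X_H=\oprodsym{\ones_H}$; then sandwiching the off-diagonal blocks with $\ones_G,\ones_H$ yields $\qform{X_G}{\ones_G}=n_G^2$, which together with $X_G\in\Elliptope{n_G}$ pins $X_G=\oprodsym{\ones_G}$, and finally $B=-\oprod{\ones_G}{\ones_H}$. Your route instead first computes $\Null(S^*)$ in full, then pushes the spectral decomposition of $X$ through that description and finishes with the neat sign argument $b\le 0$ pointwise, $\iprodt{\ones_G}{b}=0$, hence $b=0$ and $H=0$. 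Your approach has the advantage that the explicit description of $\Null(S^*)$ is exactly what you reuse for the strict-complementarity failure, so the two parts of the theorem are tied together more tightly; the paper's approach is a bit shorter and avoids the intermediate kernel computation by leaning on the extremality of $\oprodsym{\ones_G}$ in the elliptope. Both are clean; your anticipated ``most delicate step'' is indeed the only place where the two proofs diverge.
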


\begin{proof}
  It is easy to check that \(X^*\) is feasible in the primal.  We have
  \[
  S^*
  =
  2
  \begin{bmatrix}
    \mu_G I & 0^{\transp}\, \\
    0       & \alpha n_G I  \\
  \end{bmatrix}
  -
  \begin{bmatrix}
    L_G + \mu_G I                   & -\alpha\oprod{\ones_G}{\ones_H} \\
    -\alpha\oprod{\ones_H}{\ones_H} & L_H + \alpha n_G I              \\
  \end{bmatrix}
  =
  \Diag(y^*) - L_{G \cosum H}(\wb),
  \]
  and the condition \(S^* \succeq 0\) is equivalent to the conditions
  \begin{subequations}
    \label{eq:8}
    \begin{gather}
      \label{eq:9}
      A \coloneqq \mu_G I - L_G \succeq 0,
      \\
      \label{eq:10}
      (I-AA^{\MPinv})\ones_G = 0,
      \\
      \label{eq:11}
      \alpha n_G I \succeq L_H + \alpha^2 \qform{A^{\MPinv}}{\ones_G} \oprodsym{\ones_H}.
    \end{gather}
  \end{subequations}
  Note that~\eqref{eq:9} holds trivially.  Also \(A\ones_G =
  \mu_G\ones_G\), so \(\ones_G \in \Image(A)\) and \eqref{eq:10} holds
  since \(I-AA^{\MPinv}\) is the orthogonal projector onto \(\Null(A)
  = \Image(A)^{\perp}\).  Finally, \(A^{\MPinv}\ones_G =
  \mu_G^{-1}\ones_G\) so \eqref{eq:11} is equivalent to \(\alpha n_G I
  \succeq L_H + \alpha n_G \frac{1}{n_H} \oprodsym{\ones_H}\), which
  holds since \(\alpha n_G > \mu_H\) by assumption.  It follows that
  \(\dualpair{y^*}{S^*}\) is feasible in the dual.  It is easy to
  check that \(\Tr(X^*S^*) = 0\), so~\(X^*\) and
  \(\dualpair{y^*}{S^*}\) are optimal solutions.  By
  Corollary~\ref{cor:unique-maxcut}, \(\dualpair{y^*}{S^*}\) is the
  unique optimal solution for the dual.

  It remains to show that \(X^*\) is the unique optimal solution for
  the primal.  Let
  \begin{equation*}
    X =
    \begin{bmatrix}
      X_G         & B   \\
      B^{\transp} & X_H \\
    \end{bmatrix}
  \end{equation*}
  be an optimal solution for the primal.  Complementary slackness
  yields
  \begin{equation}
    \label{eq:12}
    0
    =
    X S^*
    =
    \begin{bmatrix}
      X_G(\mu_{G}I-L_G)+{\alpha}B\oprod{\ones_H}{\ones_G}           & {\alpha}X_G\oprod{\ones_G}{\ones_H}+B({\alpha}n_{G}I-L_H)           \\
      B^{\transp}(\mu_{G}I-L_G)+{\alpha}X_H\oprod{\ones_H}{\ones_G} & {\alpha}B^{\transp}\oprod{\ones_G}{\ones_H}+X_H({\alpha}n_{G}I-L_H) \\
    \end{bmatrix}.
  \end{equation}
  If \(h \perp \ones_H\) is an eigenvector of~\(L_H\),
  (left-)multiplying~\(h\) by the bottom right block in~\eqref{eq:12} yields
  \(X_{H}h = 0\), where we used the assumption that \({\alpha}n_G >
  \mu_H\).  Since \(H\) is connected, this implies that \(X_H\) is a
  nonnegative scalar multiple of~\(\oprodsym{\ones_H}\), and so
  \begin{equation*}
    X_H = \oprodsym{\ones_H}.
  \end{equation*}
  Next apply \(\iprodt{\ones_G}{\cdot\ones_H}\) and
  \(\iprodt{\ones_H}{\cdot\ones_G}\) to the top right block and bottom
  left block of~\eqref{eq:12}, respectively, to get
  \begin{gather}
    \label{eq:13}
    0 = n_H\qform{X_G}{\ones_G} + n_G\iprodt{\ones_G}{B\ones_H},
    \\
    \label{eq:14}
    0 = n_H\iprodt{\ones_H}{B^{\transp}\ones_G} + n_G\qform{X_H}{\ones_H}.
  \end{gather}
  Hence,
  \begin{equation*}
    \frac{
      \qform{X_G}{\ones_G}
    }{
      n_G^2
    }
    =
    \frac{
      \qform{X_H}{\ones_H}
    }{
      n_H^2
    }
    \qquad\text{and}\qquad
    X_G = \oprodsym{\ones_G}.
  \end{equation*}
  Finally, by~\eqref{eq:13} we get \(\iprodt{\ones_G}{B\ones_H} =
  -n_{G}n_{H}\), and so \(B = -\oprod{\ones_G}{\ones_H}\).  Hence, \(X = X^*\).
\end{proof}

Note that the dimension of the \(\lambda_{\max}(\Lcal_G(w))\)-eigenspace
controls the ``degree'' to which strict complementarity fails
in~\Cref{thm:cosum}.  In~particular, when \(G\) is the complete graph
and \(w_G = \ones\), we have \(\rank(X^*) + \rank(S^*) = 1 + n_H\).

\Cref{thm:cosum} shows that, if \(F\) is a graph which is a cosum
(i.e., the complement of~\(F\) is not connected) \(F = G \cosum H\),
where \(G\) has at least one edge and \(H\) is connected, then there
is a nonnegative weight function \(w \colon E(F) \to \Reals_{++}\) such
that strict complementarity fails for~\eqref{eq:maxcut-sdp} with \(C =
\tfrac{1}{4}\Lcal_F(w)\); one may just fix \(w_H \in
\Reals_{++}^{E(H)}\) arbitrarily, e.g., \(w_H = \ones\), and set \(w_G
\coloneqq M \ones\) for large enough~\(M\) so that \(n_{G}\mu_{G} >
n_{H}\mu_{H}\).  A natural question following from this is:
\begin{problem}
  Characterize the set of graphs for which there exists a positive
  weight function on the edges such that strict complementarity fails
  for~\eqref{eq:maxcut-sdp} when \(4C\) is the corresponding weighted
  Laplacian matrix.
\end{problem}

\section{Generic Failure of Strict Complementarity on the Boundaries
  of Normal Cones}
\label{sec:generic-sc-failure}

In this section, we consider how often strict complementarity holds
for~\eqref{eq:maxcut-sdp} when \(C\) lies in the (relative) boundary
of \(\Normal{\Elliptope{n}}{\Xb}\) for some vertex \(\Xb\)
of~\(\Elliptope{n}\).  Note that this boundary is described as a
Minkowski sum in~\eqref{eq:bd-normal-vx}.

We start by considering the case \(n = 3\),
where~\eqref{eq:bd-normal-vx} simplifies to
\begin{equation}
  \label{eq:bd-normal-vx-3}
  \bd(\Normal{\Elliptope{3}}{\oprodsym{\xb}})
  =
  \Image(\Diag) - \setst{\oprodsym{z}}{z \in \set{\xb}^{\perp}}
\end{equation}
for every \(\xb \in \set{\pm1}^3\).

\begin{proposition}
  \label{prop:sc-fail3}
  Let \(\xb \in \set{\pm1}^3\), and let \(C = \Diag(\yb) -
  \oprodsym{\zb}\) for some \(\yb \in \Reals^3\) and \(\zb \in
  \set{\xb}^{\perp}\), so that \(C \in
  \bd(\Normal{\Elliptope{3}}{\oprodsym{\xb}})\).  Then strict
  complementarity holds for~\eqref{eq:maxcut-sdp} if and only if
  \(\zb_i = 0\) for some \(i \in [3]\).
\end{proposition}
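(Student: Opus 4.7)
By the transitivity of the automorphism group of $\Elliptope{3}$ on its vertices, I may assume $\xb = \ones$, so $\zb \perp \ones$. The plan is to identify the primal and dual optimal sets, reduce strict complementarity to a rank condition on the primal optimum, and then perform a case analysis on the support of $\zb$.

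First I would identify the optima. On the primal feasible region, $\Tr(CX) = \iprodt{\yb}{\ones} - \qform{X}{\zb}$; since $X \succeq 0$ gives $\qform{X}{\zb} \geq 0$ and equality holds at $X = \oprodsym{\ones}$ (using $\iprodt{\ones}{\zb} = 0$), the primal optima are precisely $\setst{X \in \Elliptope{3}}{X\zb = 0}$. By~\Cref{cor:unique-maxcut} the dual optimum is unique; since $(\yb, \oprodsym{\zb})$ is dual-feasible and satisfies $\Tr(\oprodsym{\ones}\oprodsym{\zb}) = (\iprodt{\ones}{\zb})^2 = 0$, it is complementary to the primal optimum $\oprodsym{\ones}$ and hence the unique dual optimum.

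Next I would reduce to a rank condition. Strict complementarity asks for a primal optimum $X^*$ with $\Null(X^*) \cap \set{\zb}^{\perp} = \set{0}$. If $\zb = 0$, this is trivially satisfied by $X^* = I \in \Elliptope{3}$. Otherwise, $\Null(X^*)$ contains the nonzero vector $\zb$; since $\set{\zb}^{\perp}$ has dimension $2$ in $\Reals^3$, the condition forces $\dim\Null(X^*) = 1$, i.e., $X^*$ has rank~$2$. Thus strict complementarity is equivalent to the existence of a rank-$2$ primal optimum.

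Finally, the case analysis. If some $\zb_i = 0$, WLOG $i = 1$, then $\zb = t(e_2 - e_3)$ for some $t \neq 0$, and $X^* \coloneqq \oprodsym{e_1} + \oprodsym{(e_2+e_3)}$ is a rank-$2$ primal optimum, so strict complementarity holds. If instead $\zb_1\zb_2\zb_3 \neq 0$, I would show that $\oprodsym{\ones}$ is the only primal optimum: writing $Y \coloneqq X - \oprodsym{\ones}$ gives $\diag(Y) = 0$ and $Y\zb = 0$, and parametrizing $Y$ by its three off-diagonal entries turns $Y\zb = 0$ into a $3 \times 3$ linear system whose coefficient matrix has determinant $-2\zb_1\zb_2\zb_3 \neq 0$, forcing $Y = 0$. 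Then $X = \oprodsym{\ones}$ has rank~$1$, so strict complementarity fails. The main obstacle is this uniqueness step, which ultimately reduces to that $3 \times 3$ determinant being nonzero precisely when $\zb$ has full support.
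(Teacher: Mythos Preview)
Your proposal is correct and follows essentially the same approach as the paper's proof: both identify \((\yb,\oprodsym{\zb})\) as the unique dual optimum via \Cref{cor:unique-maxcut}, reduce the full-support case to the same \(3\times 3\) linear system with determinant \(-2\zb_1\zb_2\zb_3\), and construct the same rank-two primal optimum \(\oprodsym{e_i}+\oprodsym{(\Diag(\ones-e_i)\xb)}\) in the zero-coordinate case. The only cosmetic differences are that you normalize to \(\xb=\ones\) up front and pass to the homogeneous system via \(Y=X-\oprodsym{\ones}\), whereas the paper works with general \(\xb\) and solves the inhomogeneous system for \(X_{12},X_{13},X_{23}\) directly.
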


\begin{proof}
  Set \(\Sb \coloneqq \Diag(\yb) - C = \oprodsym{\zb}\) and \(\Xb
  \coloneqq \oprodsym{\xb}\).  Clearly, \(\dualpair{\yb}{\Sb}\) is
  feasible in the dual and \(\Tr(\Sb\Xb) = (\iprodt{\zb}{\xb})^2 =
  0\), so \(\pdpair{\Xb}{\dualpair{\yb}{\Sb}}\) is a pair of
  primal-dual optimal solutions.  By
  Corollary~\ref{cor:unique-maxcut}, \(\dualpair{\yb}{\Sb}\) is the
  unique optimal solution in the dual.

  Suppose that \(\zb_i \neq 0\) for every \(i \in [3]\).  We claim
  that \(\Xb\) is the unique optimal solution in the primal.  Indeed,
  let \(X \in \Elliptope{3}\) be optimal in the primal.  Then \(0 =
  \Tr(\Sb X) = \qform{X}{\zb}\) so \(X\zb = 0\).  Thus,
  \[
  0 =
  \begin{bmatrix}
    1 & X_{12} & X_{13} \\
    X_{12} & 1 & X_{23} \\
    X_{13} & X_{23} & 1
  \end{bmatrix}
  \begin{bmatrix}
    \zb_1 \\ \zb_2 \\ \zb_3
  \end{bmatrix}
  =
  \begin{bmatrix}
    \zb_1 + \zb_2 X_{12} + \zb_3 X_{13} \\
    \zb_1 X_{12} + \zb_2 + \zb_3 X_{23} \\
    \zb_1 X_{13} + \zb_2 X_{23} + \zb_3
  \end{bmatrix},
  \]
  so
  \[
  \begin{bmatrix}
    \zb_2 & \zb_3 & 0 \\
    \zb_1 & 0 & \zb_3 \\
    0 & \zb_1 & \zb_2
  \end{bmatrix}
  \begin{bmatrix}
    X_{12} \\ X_{13} \\ X_{23}
  \end{bmatrix}
  =
  -\zb.
  \]
  The determinant of the matrix defining the latter linear system is
  \(-2\zb_1\zb_2\zb_3 \neq 0\), so the unique solution is given by the
  off-diagonal entries of \(\Xb\).

  Suppose now that \(\zb_i = 0\) for some \(i \in [3]\).  If \(\zb =
  0\) then \(\pdpair{I}{\dualpair{\yb}{0}}\) satisfies strict
  complementarity, so assume \(\zb \neq 0\).  Set \(\xt \coloneqq
  \Diag(\ones-e_i)\xb\) and \(\Xt \coloneqq \oprodsym{\xt} +
  \oprodsym{e_i} \in \Elliptope{3}\).  Then \(\Tr(\Sb\Xt) =
  \qform{(\oprodsym{\xt} + \oprodsym{e_i})}{\zb} =
  (\iprodt{\zb}{\xt})^2 + \zb_i^2 = 0\) since \(\iprodt{\zb}{\xt} =
  \iprodt{\zb}{\xb} = 0\).  Hence,
  \(\pdpair{\Xt}{\dualpair{\yb}{\Sb}}\) is a strictly complementarity
  pair of primal-dual optimal solutions for~\eqref{eq:maxcut-sdp}.
\end{proof}

For \(n \geq 4\), characterization of strict complementarity
in~\eqref{eq:maxcut-sdp} is not as easily described.  However, we can
prove the following condition sufficient for the failure of strict
complementarity, which will turn out to be sufficient for our
purposes.

\begin{theorem}
  \label{thm:sc-failure-high-rank}
  Let \(n \geq 3\).  Let \(C = \Diag(\yb) - \Sb\) for some \(\yb \in
  \Reals^n\) and \(\Sb \in \Psd{n}\), so that \(C \in
  \bd(\Normal{\Elliptope{n}}{\oprodsym{\ones}})\).  Suppose that
  \(\Null(\Sb) = \linspan\set{\ones,h}\) for some \(h \in
  \set{\ones}^{\perp}\) and that \(h\) has at least three distinct
  coordinates.  Then strict complementarity fails
  for~\eqref{eq:maxcut-sdp}.
\end{theorem}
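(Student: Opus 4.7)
The plan is to use uniqueness of the dual optimum to reduce strict complementarity to a purely structural question about the primal, and then rule it out via a polynomial identity forced by the three-distinct-coordinates hypothesis on~$h$.

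First, I would set $\bar{X} \coloneqq \oprodsym{\ones}$ and observe that $\pdpair{\bar{X}}{\dualpair{\bar{y}}{\bar{S}}}$ is a complementary pair of primal-dual feasible solutions: $\bar{X} \in \Elliptope{n}$, $\dualpair{\bar{y}}{\bar{S}}$ is dual feasible by construction, and $\Tr(\bar{X}\bar{S}) = \qform{\bar{S}}{\ones} = 0$ since $\ones \in \Null(\bar{S})$. Hence both are optimal, and by \Cref{cor:unique-maxcut} the pair $\dualpair{\bar{y}}{\bar{S}}$ is the unique dual optimum. Consequently, strict complementarity holds if and only if there exists a primal optimum $X^* \in \Elliptope{n}$ satisfying $X^*\bar{S} = 0$ and $X^* + \bar{S} \succ 0$.

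Next, I would translate these two conditions into rank constraints on~$X^*$. The equation $X^*\bar{S} = 0$ is equivalent to $\Image(X^*) \subseteq \Null(\bar{S}) = \linspan\set{\ones,h}$, so $\rank(X^*) \leq 2$. The condition $X^* + \bar{S} \succ 0$ requires $\qform{X^*}{v} > 0$ for every nonzero $v \in \Null(\bar{S})$; combined with $\Image(X^*) \subseteq \Null(\bar{S})$ and the positive semidefiniteness of~$X^*$, this forces $\Image(X^*) = \linspan\set{\ones,h}$, and therefore $\rank(X^*) = 2$.

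Finally, I would parametrize such an $X^*$ as $X^* = PMP^{\transp}$ with $P \coloneqq [\ones \mid h] \in \Reals^{n \times 2}$ and $M \coloneqq \begin{bmatrix} \alpha & \beta \\ \beta & \gamma \end{bmatrix} \in \Pd{2}$, which gives $\diag(X^*)_i = \alpha + 2\beta h_i + \gamma h_i^2$. The feasibility constraint $\diag(X^*) = \ones$ then says that the univariate polynomial $\gamma t^2 + 2\beta t + (\alpha - 1)$ vanishes at every value taken by~$h$; since $h$ attains at least three distinct values, this degree-two polynomial must be identically zero, forcing $\gamma = \beta = 0$ and $\alpha = 1$. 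But then $\rank(M) = 1$, contradicting $\rank(X^*) = 2$. Hence no such $X^*$ exists, and strict complementarity fails. The main technical point is noticing that, once restricted to rank-two candidates with image $\linspan\set{\ones,h}$, the diagonal constraint collapses to a quadratic polynomial identity in the coordinates of~$h$; the hypothesis of three distinct values is then precisely what drives the contradiction.
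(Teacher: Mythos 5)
Your proof is correct and follows essentially the same route as the paper: both reduce to the unique dual optimum via \Cref{cor:unique-maxcut}, note that any primal optimum has image in \(\linspan\set{\ones,h}\), parametrize it by a \(2\times 2\) symmetric coefficient matrix, and use the three distinct coordinates of \(h\) to pin down the coefficients (your ``quadratic with three roots is zero'' is exactly the paper's Vandermonde determinant). The only cosmetic difference is that the paper concludes \(\oprodsym{\ones}\) is the \emph{unique} primal optimum, whereas you directly rule out a rank-two optimum; these are equivalent here.
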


\begin{proof}
  Set \(y^* \coloneqq \yb\) and \(S^* \coloneqq \Diag(y^*) - C =
  \Sb\).  Set \(\Xb \coloneqq \oprodsym{\ones}\).  Clearly,
  \(\dualpair{y^*}{S^*}\) is feasible in the dual and \(\Tr(S^*X^*) =
  0\), so \(\pdpair{\Xb}{\dualpair{y^*}{S^*}}\) is a pair of
  primal-dual optimal solutions.  By
  Corollary~\ref{cor:unique-maxcut}, \(\dualpair{y^*}{S^*}\) is the
  unique optimal solution in the dual.  We shall prove that \(\Xb\) is
  the unique optimal solution in the primal.

  Let \(X \in \Elliptope{n}\) be an optimal solution in the primal.
  By complementary slackness, \(\Tr(XS^*) = 0\), so \(S^*X = 0\) and
  \(\Image(X) \subseteq \Null(S^*) = \linspan\set{\ones,h}\).  Hence,
  \(X = \alpha_1 \oprodsym{\ones} + \alpha_2 \oprodsym{h} + \alpha_3
  (\oprod{h}{\ones} + \oprod{\ones}{h})\) for some \(\alpha \in
  \Reals^3\).  Since \(\diag(X) = \ones\), we find that \(\alpha_1 +
  \alpha_2 h_i^2 + 2\alpha_3 h_i = 1\) for every \(i \in [n]\).  Let
  \(i,j,k \in [n]\) such that \(|\set{h_i,h_j,h_k}| = 3\).  Then
  \[
  \begin{bmatrix}
    1 & 2 h_i & h_i^2\thinspace \\[2pt]
    1 & 2 h_j & h_j^2\thinspace \\[2pt]
    1 & 2 h_k & h_k^2\thinspace \\
  \end{bmatrix}
  \begin{bmatrix}
    \alpha_1 \\
    \alpha_3 \\
    \alpha_2
  \end{bmatrix}
  = \ones. \] The determinant of the matrix defining this linear
  system is a Vandermonde determinant, and it is equal to \(2^3 (h_j -
  h_i) (h_k - h_i) (h_k - h_j) \neq 0\) by assumption.  Hence,
  \(\alpha = e_1\) is its unique solution.  Thus, \(X =
  \oprodsym{\ones}\).
\end{proof}

\Cref{thm:sc-failure-high-rank} seems to indicate that strictly
complementarity fails ``almost everywhere'' on the boundary of
\(\Normal{\Elliptope{n}}{\oprodsym{\ones}}\), since the high rank
matrices make up the bulk of the boundary (consider that the set of
nonsingular matrices is open and dense) and for ``most'' of them the
extra vector \(h\) in the nullspace has at least three distinct
coordinates.  Unfortunately, we are dealing with somewhat complicated
sets (e.g., the high rank matrices in the boundary of a normal cone).
In order to make our previous statements precise, we shall make use of
the theory of Hausdorff measures, which we introduce next.

\subsection{Preliminaries on Hausdorff Measures}

We refer the reader to~\cite{Rogers98a}, though we use different
notation and more standard terminology.  See
also~\cite{DrusvyatskiyL11a,PatakiT01a} for a somewhat similar presentation.  We
focus our presentation on finite-dimensional normed spaces (over the
reals) but most of it could be developed for arbitrary metric spaces.
Our main normed spaces are (subspaces of) \(\Reals^n\) and
\(\Sym{n}\).  Since these are Euclidean spaces, they are equipped with
a norm induced by their inner-products, and that is the norm that we
will consider unless explicitly stated otherwise.  We shall only use
other norms in~\Cref{sec:rank-one2}.

Let \(\Vscr\) be a finite-dimensional normed space.  Let \(d \in
\Reals_+\) and \(\eps \in \Reals_{++}\).  For each \(\Xscr \subseteq
\Vscr\), define
\[
H_d^{\eps}(\Xscr)
\coloneqq
\inf\setst*{
  \sum_{i=0}^{\infty}
  \sqbrac[\big]{\diam(\Uscr_i)}^d
}{
  \set{\Uscr_i}_{i \in \Naturals} \subseteq \Powerset{\Vscr},\,
  \Xscr \subseteq \bigcup_{i=0}^{\infty} \Uscr_i,\,
  \diam(\Uscr_i) < \eps\,\forall i \in \Naturals
},
\]
where the \emph{diameter} of \(\Uscr \subseteq \Vscr\) is
\(\diam(\Uscr) \coloneqq \sup_{x,y \in \Uscr} \norm{x-y}\).  The
function \(H_d \colon \Powerset{\Vscr} \to [0,+\infty]\) defined by
\begin{equation}
  \label{eq:15}
  H_d(\Xscr)
  \coloneqq
  \sup_{\mathclap{\eps>0}} H_d^{\eps}(\Xscr)
  =
  \lim_{\eps \downarrow 0} H_d^{\eps}(\Xscr)
  \qquad
  \forall \Xscr \subseteq \Vscr
\end{equation}
is an outer measure on~\(\Vscr\).
Hence, the restriction of~\(H_d\) to the \(H_d\)-measurable subsets
of~\(\Vscr\) is a complete measure on~\(\Vscr\), called the
\emph{\(d\)-dimensional Hausdorff measure} on~\(\Vscr\).  The
\(0\)-dimensional Hausdorff measure~\(H_0\) is the cardinality of a
set, \(H_1\) is its length, \(H_2\) is its area, and so on.

Let \(d\) be a positive integer and set \(\Vscr \coloneqq \Reals^d\).
Let \(\lambda_d \colon \Powerset{\Reals^d} \to [0,+\infty]\) denote the
\(d\)-dimensional Lebesgue outer measure on~\(\Reals^d\).  It can be
proved~\cite[Theorem~30]{Rogers98a} that
\begin{equation}
  \label{eq:Lebesgue-Hausdorff}
  \frac{\lambda_d(\Xscr)}{\lambda_d(\Ball)}
  =
  \frac{H_d(\Xscr)}{2^d}
  \qquad
  \forall \Xscr \subseteq \Reals^d.
\end{equation}
In~particular, the \(H_d\)-measurable subsets of~\(\Reals^d\) are the
same as the \(\lambda_d\)\nobreakdash-measurable sets.

Let \(a,b \in \Reals_+\) with \(a < b\) and let \(\Xscr \subseteq \Vscr\).
It is not hard to prove from the definition that
\begin{alignat}{2}
  \label{eq:16}
  H_a(\Xscr) < \infty & \implies H_{b}(\Xscr) = 0, \\
  H_b(\Xscr) > 0      & \implies H_{a}(\Xscr) = \infty.
\end{alignat}
Hence,
\begin{equation}
  \label{eq:17}
  \sup\setst{d \in \Reals_+}{H_d(\Xscr) = \infty}
  =
  \inf\setst{d \in \Reals_+}{H_d(\Xscr) = 0},
\end{equation}
and the common value in~\eqref{eq:17} is the \emph{Hausdorff dimension}
of~\(\Xscr\), denoted by \(\dim_H(\Xscr)\).  In~particular,
\begin{equation}
  \label{eq:18}
  \text{
    if~\(d \in \Reals_+\) and \(\Xscr \subseteq \Vscr\)
    satisfy \(H_d(\Xscr) \in
    (0,\infty)\), then \(\dim_H(\Xscr) = d\).%
  }
\end{equation}

We may now define \emph{genericity} precisely.  Let \(\Xscr\) be a subset of
a finite-dimensional normed space~\(\Vscr\).  Let \(P\) be a property that may hold or
fail for points in~\(\Xscr\), i.e., \(P(x)\) is either true or false
for each \(x \in \Xscr\).  We say that \(P\) \emph{holds generically
  on~\(\Xscr\)} if \(H_d(\setst{x \in \Xscr}{\text{\(P(x)\) is
    false}}) = 0\) for \(d \coloneqq \dim_H(\Xscr)\).  We say that
\(P\) \emph{fails generically on~\(\Xscr\)} if the negation of~\(P\)
holds generically on~\(\Xscr\).  In~\Cref{sec:generic-failure}, we
will use \Cref{thm:sc-failure-high-rank} to prove that strict
complementarity fails generically at the boundary of the normal cone
of any vertex of~\(\Elliptope{n}\), for \(n \geq 3\), modulo some
qualification on the ambient space.  In the remainder of this section
and in the next one, we will describe a few more measure-theoretic
tools that we shall use towards this goal.

Let \(\Vscr\) and~\(\Uscr\) be finite-dimensional normed spaces.  Let
\(\Xscr \subseteq \Vscr\).  Recall that a function \(\varphi \colon \Xscr \to
\Uscr\) is \emph{Lipschitz continuous} with Lipschitz constant \(L >
0\) if
\begin{equation}
  \label{eq:19}
  \norm{\varphi(x)-\varphi(y)} \leq L\norm{x-y}
  \qquad
  \forall x,y \in \Xscr.
\end{equation}
The following is well known and easy to prove:
\begin{theorem}
  \label{thm:Lipschitz}
  Let \(\Vscr\) and~\(\Uscr\) be finite-dimensional normed spaces.  Let \(\Xscr \subseteq
  \Vscr\) and \(d \in \Reals_+\).  Let \(\varphi \colon \Xscr \to \Uscr\) be
  Lipschitz continuous with Lipschitz constant~\(L\).  Then
  \begin{equation}
    \label{eq:20}
    H_d\paren[\big]{\varphi(\Xscr)} \leq L^d H_d(\Xscr).
  \end{equation}
\end{theorem}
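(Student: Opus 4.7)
The plan is to unravel the definitions and exploit the key geometric feature of Lipschitz maps: they contract diameters by a factor of at most $L$. Concretely, for any subset $\Ascr \subseteq \Xscr$, the Lipschitz condition~\eqref{eq:19} immediately yields $\diam(\varphi(\Ascr)) \leq L\,\diam(\Ascr)$, since for $x,y \in \Ascr$ we have $\norm{\varphi(x)-\varphi(y)} \leq L\norm{x-y} \leq L\,\diam(\Ascr)$.

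First I would fix an arbitrary $\eps > 0$ and consider any countable cover $\set{\Uscr_i}_{i \in \Naturals}$ of $\Xscr$ with $\diam(\Uscr_i) < \eps/L$ for each $i$. Replacing $\Uscr_i$ by $\Xscr \cap \Uscr_i$ does not increase diameters, so the family $\set{\varphi(\Xscr \cap \Uscr_i)}_{i \in \Naturals}$ is a countable cover of $\varphi(\Xscr)$ whose members all have diameter at most $L\,\diam(\Uscr_i) < \eps$. This makes it an admissible cover in the definition of $H_d^{\eps}(\varphi(\Xscr))$, and the diameter bound gives
\begin{equation*}
  \sum_{i=0}^{\infty} \sqbrac[\big]{\diam(\varphi(\Xscr \cap \Uscr_i))}^d
  \leq
  L^d \sum_{i=0}^{\infty} \sqbrac[\big]{\diam(\Uscr_i)}^d.
\end{equation*}

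Taking the infimum of the right-hand side over all covers of $\Xscr$ by sets of diameter less than $\eps/L$ yields $H_d^{\eps}(\varphi(\Xscr)) \leq L^d H_d^{\eps/L}(\Xscr)$. Finally, letting $\eps \downarrow 0$ (so that $\eps/L \downarrow 0$ as well) and using the characterization of $H_d$ as the supremum of $H_d^{\eps}$ over $\eps > 0$ in~\eqref{eq:15}, I obtain $H_d(\varphi(\Xscr)) \leq L^d H_d(\Xscr)$, as desired.

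There is essentially no obstacle here; the only minor care needed is to choose the scale of the initial cover correctly (diameter less than $\eps/L$ rather than $\eps$) so that the image cover is admissible for $H_d^{\eps}(\varphi(\Xscr))$, and to note that intersecting the cover with $\Xscr$ before applying $\varphi$ is harmless because it only decreases diameters.
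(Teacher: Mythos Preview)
Your proof is correct and is exactly the standard argument one expects here. The paper itself does not supply a proof of this theorem---it simply declares the result ``well known and easy to prove''---so your write-up is essentially filling in that omitted routine verification.
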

\Cref{thm:Lipschitz} is especially useful to determine some Hausdorff
dimensions via bi-Lipschitz maps.  We recall the definition here.  Let
\(\Vscr\) and~\(\Uscr\) be finite-dimensional normed spaces.  Let \(\Xscr \subseteq \Vscr\),
and let \(\varphi \colon \Xscr \to \Uscr\) be a one-to-one function with
range \(\Yscr \coloneqq \varphi(\Xscr)\).  We say that \(\varphi\) is
\emph{bi-Lipschitz continuous} with Lipschitz constants \(L_1 > 0\)
and \(L_2 > 0\) if \(\varphi\) is Lipschitz continuous with Lipschitz
constant~\(L_1\) and \(\varphi^{-1} \colon \Yscr \to \Vscr\) is Lipschitz
continuous with Lipschitz constant~\(L_2\).
\begin{corollary}
  \label{cor:bi-Lipschitz}
  Let \(\Vscr\) and~\(\Uscr\) be finite-dimensional normed spaces.  Let \(\Xscr \subseteq
  \Vscr\) and \(d \in \Reals_+\).  Let \(\varphi \colon \Xscr \to \Uscr\) be
  bi-Lipschitz continuous with Lipschitz constants~\(L_1\)
  and~\(L_2\).  Then
  \begin{equation}
    \label{eq:21}
    L_2^{-d} H_d(\Xscr) \leq H_d(\varphi(\Xscr)) \leq L_1^d H_d(\Xscr).
  \end{equation}
  In~particular, if \(H_d(\Xscr) \in (0,\infty)\), then
  \(\dim_H(\varphi(\Xscr)) = d\).
\end{corollary}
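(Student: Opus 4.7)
The plan is to derive both inequalities as immediate consequences of \Cref{thm:Lipschitz}, applied to $\varphi$ and to its inverse $\varphi^{-1}$, and then to extract the Hausdorff-dimension statement from \eqref{eq:18}.

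First I would apply \Cref{thm:Lipschitz} directly to $\varphi \colon \Xscr \to \Uscr$, which is Lipschitz with constant $L_1$. This yields at once
\[
  H_d\paren{\varphi(\Xscr)} \leq L_1^d H_d(\Xscr),
\]
which is the upper bound in~\eqref{eq:21}.

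Next, I would apply \Cref{thm:Lipschitz} to $\varphi^{-1} \colon \Yscr \to \Vscr$, where $\Yscr \coloneqq \varphi(\Xscr)$; by the definition of bi-Lipschitz continuity, $\varphi^{-1}$ is Lipschitz continuous with constant~$L_2$. The theorem gives
\[
  H_d(\Xscr) = H_d\paren{\varphi^{-1}(\Yscr)} \leq L_2^d H_d(\Yscr) = L_2^d H_d\paren{\varphi(\Xscr)},
\]
where the first equality uses that $\varphi$ is a bijection from $\Xscr$ onto $\Yscr$. Rearranging produces the lower bound $L_2^{-d} H_d(\Xscr) \leq H_d(\varphi(\Xscr))$.

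For the final assertion, suppose $H_d(\Xscr) \in (0,\infty)$. The two inequalities just established show that $H_d(\varphi(\Xscr))$ is sandwiched between the positive, finite quantities $L_2^{-d} H_d(\Xscr)$ and $L_1^d H_d(\Xscr)$, so $H_d(\varphi(\Xscr)) \in (0,\infty)$ as well. An appeal to~\eqref{eq:18} then yields $\dim_H(\varphi(\Xscr)) = d$. There is no real obstacle here: the entire argument is the symmetric use of \Cref{thm:Lipschitz} for $\varphi$ and $\varphi^{-1}$, and the only point worth stating carefully is that $\varphi$ is a bijection onto $\Yscr$ so that $\varphi^{-1}(\Yscr) = \Xscr$, which is built into the definition of bi-Lipschitz continuity given just before the corollary.
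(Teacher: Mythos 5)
Your argument is correct and is exactly the intended one: the paper leaves this corollary without an explicit proof precisely because it follows by applying \Cref{thm:Lipschitz} to \(\varphi\) and to \(\varphi^{-1}\), then invoking~\eqref{eq:18}. Nothing is missing.
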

This corollary may be used, for~instance, to regard any
\(d\)-dimensional Euclidean space~\(\Vscr\) as~\(\Reals^d\) by
considering the coordinate map \(\varphi \colon \Vscr \to \Reals^d\)
with respect to a fixed orthonormal basis of~\(\Vscr\).  Another
frequent use of \Cref{cor:bi-Lipschitz} goes as follows.  Equip the
space \(\Sym{n}\) with the trace inner-product.  If \(Q \in \Reals^{n
  \times n}\) is an orthogonal matrix, the map \(X \in \Sym{n} \mapsto
QXQ^{\transp}\) preserves inner-products, and hence norms and
distances; hence, the map is Lipschitz continuous with Lipschitz
constant~1.  Its inverse is \(X \in \Sym{n} \mapsto Q^{\transp}XQ\)
and so the map \(X \in \Sym{n} \mapsto QXQ^{\transp}\) is bi-Lipschitz
continuous with Lipschitz constants~\(1\) and~\(1\).

The next result is useful for determining the Hausdorff dimension of
some simple unbounded sets in the \(\sigma\)-finite case,
when~\eqref{eq:18} is not directly applicable:
\begin{proposition}
  \label{prop:dim-countable}
  Let \(\Xscr\) be a subset of a finite-dimensional normed~\(\Vscr\).  For each \(i
  \in \Naturals\), let \(\Yscr_i\) be a subset of a finite-dimensional normed
  space~\(\Uscr_i\), and let \(\varphi_i \colon \Yscr_i \to \Vscr\) be a
  Lipschitz continuous function with Lipschitz constant~\(L_i\).  If
  \(\Xscr \subseteq \bigcup_{i \in \Naturals} \varphi_i(\Yscr_i)\), then
  \(\dim_H(\Xscr) \leq \sup_{i \in \Naturals} \dim_H(\Yscr_i)\).
\end{proposition}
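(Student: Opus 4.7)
The plan is to reduce the statement to countable subadditivity of the outer measure $H_d$ combined with the Lipschitz scaling bound in \Cref{thm:Lipschitz}, and then to let $d$ approach $\sup_i \dim_H(\Yscr_i)$ from above.

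Set $s \coloneqq \sup_{i \in \Naturals} \dim_H(\Yscr_i)$; we may assume $s < \infty$, otherwise there is nothing to show. Fix any real $d > s$. Since $d > \dim_H(\Yscr_i)$ for every $i \in \Naturals$, the characterization of Hausdorff dimension in~\eqref{eq:17} gives $H_d(\Yscr_i) = 0$ for each~$i$. By \Cref{thm:Lipschitz} applied to $\varphi_i$ with Lipschitz constant~$L_i$, we obtain
\begin{equation*}
  H_d\paren[\big]{\varphi_i(\Yscr_i)} \leq L_i^d H_d(\Yscr_i) = 0
  \qquad \forall i \in \Naturals.
\end{equation*}

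Now $H_d$ is an outer measure on~$\Vscr$ (this was noted right after~\eqref{eq:15}), hence countably subadditive and monotone. Using $\Xscr \subseteq \bigcup_{i \in \Naturals} \varphi_i(\Yscr_i)$, we get
\begin{equation*}
  H_d(\Xscr)
  \leq
  H_d\!\paren*{\bigcup_{i \in \Naturals} \varphi_i(\Yscr_i)}
  \leq
  \sum_{i=0}^{\infty} H_d\paren[\big]{\varphi_i(\Yscr_i)}
  = 0.
\end{equation*}
Therefore $H_d(\Xscr) = 0$, which by~\eqref{eq:17} yields $\dim_H(\Xscr) \leq d$. Since $d$ was an arbitrary real strictly greater than~$s$, we conclude $\dim_H(\Xscr) \leq s$, as desired.

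There is no serious obstacle here: the proof is essentially a bookkeeping exercise built from two ingredients already stated in the paper, namely the Lipschitz bound and the outer measure axioms for~$H_d$. The only point that deserves mild care is the initial reduction to $s < \infty$ (trivial), and invoking the ``$H_d = 0$ for $d$ above the Hausdorff dimension'' direction of~\eqref{eq:17}, which is immediate from the supremum/infimum identity defining~$\dim_H$.
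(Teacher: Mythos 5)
Your proof is correct and follows essentially the same route as the paper's: fix \(\dbar\) strictly above the supremum of the Hausdorff dimensions, use~\eqref{eq:17} to get \(H_{\dbar}(\Yscr_i)=0\), push this forward through \Cref{thm:Lipschitz}, and invoke countable subadditivity of the outer measure. The only difference is that you spell out the subadditivity step and the final passage to the infimum explicitly, which the paper leaves implicit.
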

\begin{proof}
  Set \(d \coloneqq \sup_{i \in \Naturals} \dim_H(\Yscr_i)\).  Let \(\dbar
  > d\).  Then~\eqref{eq:17} yields \(H_{\dbar}(\Yscr_i) = 0\) for each \(i
  \in \Naturals\), so by \Cref{thm:Lipschitz} we have \(H_{\dbar}(\Xscr)
  \leq \sum_{i \in \Naturals} L_i^{\dbar} H_{\dbar}(\Yscr_i) = 0\).
\end{proof}
For instance, \(\Reals^d = \bigcup_{M \in \Naturals} M\Ball\) and the
ball \(M\Ball \subseteq \Reals^d\) with nonzero~\(M\) has Hausdorff
dimension~\(d\) by~\eqref{eq:18} and~\eqref{eq:Lebesgue-Hausdorff}, so
\Cref{prop:dim-countable} shows that \(\dim_H(\Reals^d) \leq d\).
Since \(\Reals^d \supseteq \Ball\) shows that \(H_d(\Reals^d) \geq
H_d(\Ball) > 0\) by~\eqref{eq:Lebesgue-Hausdorff}, we conclude by~\eqref{eq:18} that
\(\dim_H(\Reals^d) = d\).  Together with \Cref{cor:bi-Lipschitz}, this
shows that Hausdorff dimension and the usual (linear) dimension
coincide on linear subspaces, and hence also for convex sets by
translation invariance.

\subsection{Hausdorff Measures and the Boundary Structure of Convex Sets}
\label{sec:bdstruct}

In this section we collect some results relating Hausdorff measures
and the boundary structure of convex sets, including a quick review of
basic facts about faces.

The following result is well known:
\begin{theorem}
  \label{thm:dimH-bd-compact}
  Let \(\Ebb\) be an Euclidean space.  If \(\Cscr \subseteq \Ebb\) is a
  compact convex set with dimension \(d \geq 1\), then
  \(\dim_H(\rbd(\Cscr)) = d-1\).
\end{theorem}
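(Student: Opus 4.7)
The plan is to reduce $\Cscr$ to a canonical form and then establish matching upper and lower bounds on $\dim_H(\rbd(\Cscr))$ via two Lipschitz maps. Since $\Cscr$ has dimension~$d$, its affine hull is isometric to $\Reals^d$; isometries are bi-Lipschitz with constants~$1$, so Corollary~\ref{cor:bi-Lipschitz} lets us transport the problem to $\Reals^d$ without changing Hausdorff dimensions. Translations are also isometries, so I~may further assume $0 \in \interior(\Cscr)$, in which case $\rbd(\Cscr) = \bd(\Cscr)$ and there exist $0 < \rho \leq R < \infty$ with $\rho\Ball \subseteq \Cscr \subseteq R\Ball$.

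For the upper bound $\dim_H(\bd(\Cscr)) \leq d-1$, I~would use the radial parametrization of~$\bd(\Cscr)$ from the unit sphere $S^{d-1} \coloneqq \bd(\Ball)$. The Minkowski functional $p_\Cscr(x) \coloneqq \inf\setst{t > 0}{x \in t\Cscr}$ is sublinear and satisfies $\norm{x}/R \leq p_\Cscr(x) \leq \norm{x}/\rho$; in particular it is Lipschitz and bounded away from~$0$ on~$S^{d-1}$, so the map $\varphi(u) \coloneqq u/p_\Cscr(u)$ is a Lipschitz surjection $S^{d-1} \to \bd(\Cscr)$. I~would then show $H_{d-1}(S^{d-1}) < \infty$ by observing that $\bd(\Ball_\infty) \subseteq \Reals^d$ is a finite union of isometric copies of $[-1,1]^{d-1}$, each with finite $H_{d-1}$ by~\eqref{eq:Lebesgue-Hausdorff}, and that the radial map $x \mapsto x/\norm{x}$ sends $\bd(\Ball_\infty)$ onto $S^{d-1}$ and is Lipschitz there since $\bd(\Ball_\infty)$ is bounded away from~$0$. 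Two applications of Theorem~\ref{thm:Lipschitz} then yield $H_{d-1}(\bd(\Cscr)) < \infty$, so $\dim_H(\bd(\Cscr)) \leq d-1$ by~\eqref{eq:16}--\eqref{eq:17}.

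For the lower bound I~would use an orthogonal projection $\pi \colon \Reals^d \to \Reals^{d-1}$ onto, say, the first $d-1$ coordinates, which is $1$-Lipschitz. A short support-line argument shows $\pi(\bd(\Cscr)) = \pi(\Cscr)$: for each $y \in \pi(\Cscr)$ the fiber $\pi^{-1}(y) \cap \Cscr$ is a compact segment on a line, and its two endpoints lie in~$\bd(\Cscr)$. Since $\pi(\rho\Ball) \subseteq \pi(\Cscr)$ contains a $(d-1)$-dimensional ball, $\pi(\Cscr)$ has positive $(d-1)$-dimensional Lebesgue measure, so~\eqref{eq:Lebesgue-Hausdorff} gives $H_{d-1}(\pi(\Cscr)) > 0$; Theorem~\ref{thm:Lipschitz} then yields $H_{d-1}(\bd(\Cscr)) \geq H_{d-1}(\pi(\bd(\Cscr))) > 0$, whence $\dim_H(\bd(\Cscr)) \geq d-1$ by~\eqref{eq:18}. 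The only technical nuisance is the quantitative Lipschitz control of~$p_\Cscr$ and of its reciprocal on~$S^{d-1}$; this follows cleanly from the sandwich $\rho\Ball \subseteq \Cscr \subseteq R\Ball$ and the subadditivity of~$p_\Cscr$.
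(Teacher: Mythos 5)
Your proof is correct, but both of your key maps differ from the ones the paper uses. The paper performs the same reduction to \(\Reals^d\) with \(0 \in \interior(\Cscr)\) and sandwiches \(\Cscr\) between two cubes, \(2\eps\Ball_{\infty} \subseteq \Cscr \subseteq \tfrac{1}{2}M\Ball_{\infty}\), but then it extracts both bounds from the \emph{metric} (nearest-point) projection, which is \(1\)-Lipschitz onto any closed convex set: projecting \(M\,\bd(\Ball_{\infty})\) onto \(\Cscr\) covers \(\bd(\Cscr)\) and yields \(H_{d-1}(\bd(\Cscr)) < \infty\), while projecting \(\bd(\Cscr)\) onto \(\eps\Ball_{\infty}\) covers \(\eps\,\bd(\Ball_{\infty})\) and yields \(H_{d-1}(\bd(\Cscr)) > 0\), the seed measure \(H_{d-1}(\bd(\Ball_{\infty})) \in (0,\infty)\) being common to both arguments. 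You instead use the gauge-based radial map \(u \mapsto u/p_{\Cscr}(u)\) from the unit sphere for the upper bound and an orthogonal projection onto a hyperplane for the lower bound. Both routes are sound. The paper's is more symmetric and requires no Lipschitz-constant computation beyond nonexpansiveness of the metric projection (though it leaves implicit the surjectivity of the two projections onto the relevant boundaries); yours trades that for an explicit, if routine, Lipschitz estimate for \(u/p_{\Cscr}(u)\) on the sphere, which you correctly identify as following from \(\rho\Ball \subseteq \Cscr \subseteq R\Ball\) and sublinearity. Your lower bound via a linear projection is arguably the more elementary of the two: it rests only on the observation that the endpoints of each fiber segment \(\pi^{-1}(y) \cap \Cscr\) lie on \(\bd(\Cscr)\) (including the degenerate single-point fibers, which are automatically boundary points) and on \(\pi(\Cscr)\) containing a \((d-1)\)-dimensional ball of positive Lebesgue, hence positive \(H_{d-1}\), measure via \eqref{eq:Lebesgue-Hausdorff}.
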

\begin{proof}
  We may assume that \(\dim(\Ebb) = d\) so that \(\Cscr\) has nonempty
  interior.  By choosing an orthonormal basis for~\(\Ebb\), we may
  assume that \(\Ebb = \Reals^d\).  We may also assume that \(0 \in
  \interior(\Cscr)\) by translation invariance of Hausdorff measure.
  Set \(X \coloneqq
  \bd(\Ball_{\infty})\), and note that \(H_{d-1}(X) \in (0,+\infty)\)
  by~\eqref{eq:Lebesgue-Hausdorff} and \Cref{cor:bi-Lipschitz}.  Let
  \(\eps,M \in \Reals_{++}\) such that \(2 \eps \Ball_{\infty}
  \subseteq \Cscr \subseteq \tfrac{1}{2}M\kern .5pt \Ball_{\infty}\).
  Let \(p_\Cscr \colon \Reals^d \to \Cscr\) be the metric projection
  onto~\(\Cscr\), i.e., \(\set{p_\Cscr(x)} = \argmin_{y \in
    \Cscr}\norm{y-x}\) for each \(x \in \Reals^d\).  Then \(p_\Cscr\)
  is Lipschitz continuous (with Lipschitz constant~1).
  \Cref{thm:Lipschitz} applied
  to~\(p_\Cscr\mathord{\restriction}_{MX}\) and positive homogeneity
  of~\(H_{d-1}\) (of degree~\(d-1\)) yield \(H_{d-1}(\bd(\Cscr)) <
  \infty\).  Similarly, applying \Cref{thm:Lipschitz} to the
  restriction to~\(\bd(\Cscr)\) of metric projection onto~\(\eps
  \Ball_{\infty}\) yields \(H_{d-1}(\bd(\Cscr)) > 0\).  The theorem
  now follows from~\eqref{eq:18}.
\end{proof}

Since we are dealing with convex cones, the previous result will be
more useful to us when stated in a lifted form about pointed closed
convex cones:
\begin{corollary}
  \label{cor:dimH-bd-cone}
  Let \(\Ebb\) be an Euclidean space.  If \(\Kscr \subseteq \Ebb\) is a
  pointed closed convex cone with dimension \(d \geq 1\), then
  \(\dim_H(\rbd(\Kscr)) = d-1\).
\end{corollary}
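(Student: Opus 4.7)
The plan is to sandwich $\dim_H(\rbd(\Kscr))$ between $d-1$ and $d-1$, reducing both inequalities to \Cref{thm:dimH-bd-compact}: the upper bound via exhaustion of $\bd(\Kscr)$ by balls, and the lower bound via metric projection onto a small ball inside $\Kscr$. After passing to the linear span of $\Kscr$ we may assume $\Kscr$ is full-dimensional in $\Ebb$, so $\rbd(\Kscr) = \bd(\Kscr)$ and $\interior(\Kscr) \neq \emptyset$; the case $d = 1$ is immediate since then $\bd(\Kscr) = \set{0}$, so assume $d \geq 2$.

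For the upper bound I would write $\bd(\Kscr) = \bigcup_{M \in \Naturals}(\bd(\Kscr) \cap M\Ball)$. For each $M$, a small positive scaling of any $x_0 \in \interior(\Kscr)$ shows that the compact convex set $\Kscr \cap M\Ball$ has dimension $d$, and one checks $\bd(\Kscr) \cap M\Ball \subseteq \bd(\Kscr \cap M\Ball)$. \Cref{thm:dimH-bd-compact} then gives $\dim_H(\bd(\Kscr \cap M\Ball)) = d-1$, and \Cref{prop:dim-countable} applied to the inclusion maps yields $\dim_H(\bd(\Kscr)) \leq d-1$.

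For the lower bound I would fix $x_0 \in \interior(\Kscr)$ and $\eps > 0$ with $B \coloneqq x_0 + \eps \Ball \subseteq \interior(\Kscr)$ and use the metric projection $p_B$ onto $B$, which is Lipschitz with constant $1$ and satisfies $p_B(y) = x_0 + \eps(y-x_0)/\norm{y-x_0}$ for $y \notin B$. Since $\Kscr$ is closed, pointed, and a proper subset of $\Ebb$, the set $U \coloneqq \setst{u \in \bd(\Ball)}{u \notin \Kscr}$ is nonempty and open in $\bd(\Ball)$. The recession cone of any convex cone coincides with itself, so for each $u \in U$ the ray $\setst{x_0+tu}{t \geq 0}$ exits $\Kscr$ at some finite $t^*(u) > \eps$; then $x_0 + t^*(u)u \in \bd(\Kscr)$ and $p_B(x_0 + t^*(u)u) = x_0 + \eps u$. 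Consequently $p_B(\bd(\Kscr)) \supseteq x_0 + \eps U$, and \Cref{thm:Lipschitz} with Lipschitz constant $1$ yields $H_{d-1}(\bd(\Kscr)) \geq H_{d-1}(x_0 + \eps U)$.

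The main obstacle is to verify that $H_{d-1}(U) > 0$, that is, that every nonempty open subset of the unit sphere in $\Ebb$ has positive $(d-1)$-dimensional Hausdorff measure. I plan to handle this by fixing any $u \in U$ and showing that for all sufficiently small $r > 0$ the smooth map $w \in u^\perp \mapsto (u+w)/\norm{u+w}$ restricts to a bi-Lipschitz bijection from the $(d-1)$-dimensional ball $r\Ball \cap u^\perp$ onto a spherical cap contained in $U$; \Cref{cor:bi-Lipschitz} together with \eqref{eq:Lebesgue-Hausdorff} then furnish positivity. Combining this with \eqref{eq:16} gives $\dim_H(\bd(\Kscr)) \geq d-1$, which together with the upper bound closes the proof.
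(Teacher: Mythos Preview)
Your proof is correct. For the upper bound you exhaust \(\bd(\Kscr)\) by its intersections with balls \(M\Ball\) and invoke \Cref{thm:dimH-bd-compact} together with \Cref{prop:dim-countable}; the paper does the same in spirit but first writes the pointed cone as a homogenization \(\Kscr=\Reals_+(1\oplus\Cscr)\) over a compact convex base \(\Cscr\subseteq\Reals^{d-1}\) and exhausts by the slabs \(\Kscr_N\coloneqq\Kscr\cap\bigl([N,N+1]\oplus\Reals^{d-1}\bigr)\) instead. That slab decomposition is not incidental: it is reused verbatim in the proof of \Cref{thm:conic-Larman}, so the investment pays off there. Your lower bound via metric projection onto an interior ball is a genuinely different and more explicit argument than the paper's, which is quite terse on this point; your route avoids the conic-base representation altogether and nicely echoes the projection trick already used inside the proof of \Cref{thm:dimH-bd-compact}. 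One small observation that may streamline your write-up: your argument actually yields \(p_B(\bd(\Kscr))=x_0+\eps U\) exactly, since \(y\in\bd(\Kscr)\) with \(x_0\in\interior(\Kscr)\) forces \(y-x_0\notin\Kscr\).
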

\begin{proof}
  We may assume that \(\Ebb = \Reals^d\).  Since \(\Kscr\) is pointed,
  after applying some rotation, which preserves Hausdorff measures by
  \Cref{cor:bi-Lipschitz}, we may assume that \(\Kscr = \Reals_+(1
  \oplus \Cscr)\) for some compact convex set \(\Cscr \subseteq
  \Reals^{\dbar}\) where \(\dbar \coloneqq d-1\).  For each \(N \in
  \Naturals\), define the compact convex set \(\Kscr_N \coloneqq \Kscr
  \cap [N,N+1] \oplus \Reals^{\dbar}\).
  Since
  \begin{equation}
    \label{eq:22}
    \rbd(\Kscr)
    \subseteq
    \bigcup_{N=0}^{\infty}
    \rbd(\Kscr_N),
  \end{equation}
  the result follows from \Cref{prop:dim-countable} and
  \Cref{thm:dimH-bd-compact}.
\end{proof}

The next result refers to faces of a convex set, so before we state it
we shall briefly recall the basic theory;
see~\cite[Sec.~18]{Rockafellar97a}.  Let \(\Ebb\) be an Euclidean
space.  Let \(\Cscr \subseteq \Ebb\) be a convex
set.  A convex subset~\(\Fscr\) of~\(\Cscr\) is a \emph{face} of~\(\Cscr\) if, for
each \(x,y \in \Cscr\) such that the open line segment \((x,y) \coloneqq
\setst{(1-\lambda)x+\lambda y}{\lambda \in (0,1)}\) between~\(x\)
and~\(y\) meets~\(\Fscr\), we have \(x,y \in \Fscr\).  We use the notation \(\Fscr
\faceeq \Cscr\) to denote that~\(\Fscr\) is a face of~\(\Cscr\), and \(\Fscr \faceneq
\Cscr\) to denote that~\(\Fscr\) is a \emph{proper} face of~\(\Cscr\), i.e., \(\Fscr
\faceeq \Cscr\) and \(\Fscr \neq \Cscr\).  Denote \(\Faces(\Cscr) \coloneqq
\setst{\Fscr}{\Fscr \faceeq \Cscr}\).

Faces of closed convex sets are closed, and faces of convex cones are
convex cones.  An arbitrary intersection of faces of~\(\Cscr\) is a face
of~\(\Cscr\) and, since the faces of a convex set are partially ordered by
inclusion and \(\Cscr \faceeq \Cscr\), every point~\(x\) of~\(\Cscr\) lies in a
unique minimal face~\(\Fscr\) of~\(\Cscr\); this face~\(\Fscr\) is characterized
by the property \(x \in \ri(\Fscr)\).  Also, it can be proved that
\(\setst{\ri(\Fscr)}{\emptyset \neq \Fscr \faceeq \Cscr}\) is a partition
of~\(\Cscr\).  If \(\Cscr\) is a compact convex set, it is not hard to prove
that the faces of the homogenization of~\(\Cscr\) are described by:
\begin{equation}
  \label{eq:25}
  \Faces\paren[\big]{
    \Reals_+(1 \oplus \Cscr)
  }
  =
  \set[\big]{\emptyset,\set{0}}
  \cup
  \setst[\big]{
    \Reals_+(1 \oplus \Fscr)
  }{
    \emptyset \neq \Fscr \faceeq \Cscr
  }.
\end{equation}

\begin{theorem}[Larman~\cite{Larman71a}]
  \label{thm:Larman}
  Let \(\Ebb\) be an Euclidean space.  If \(\Cscr \subseteq \Ebb\) is a
  compact convex set with dimension \(d \geq 1\), then
  \begin{equation*}
    H_{d-1}\paren[\Big]{\,
      \bigcup_{\Fscr \faceneq \Cscr}{\rbd(\Fscr)}
    } = 0.
  \end{equation*}
\end{theorem}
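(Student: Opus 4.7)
The plan is to reduce the statement to showing that the set of non-smooth boundary points of~\(\Cscr\) (those \(x \in \rbd(\Cscr)\) with \(\dim \Normal{\Cscr}{x} \geq 2\)) has \(H_{d-1}\)-measure zero, and then to derive the latter via a Lipschitz-graph representation of~\(\bd(\Cscr)\) combined with Rademacher's theorem. I~first pass to \(\Ebb = \Reals^d\) via \Cref{cor:bi-Lipschitz} and reduce to the case that~\(\Cscr\) has nonempty interior by working inside the affine hull of~\(\Cscr\). Denote \(\mathcal{U} \coloneqq \bigcup_{\Fscr \faceneq \Cscr} \rbd(\Fscr)\) and let \(\mathcal{S} \subseteq \rbd(\Cscr)\) be the set of non-smooth boundary points. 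It~suffices to prove both \(\mathcal{U} \subseteq \mathcal{S}\) and \(H_{d-1}(\mathcal{S}) = 0\).

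For~\(\mathcal{U} \subseteq \mathcal{S}\), the hypothesis \(x \in \mathcal{U}\) means \(x \in \Fscr' \subsetneq \Fscr \faceneq \Cscr\) for some faces~\(\Fscr,\Fscr'\).  Since every proper face of a closed convex set is contained in a proper exposed face (one obtains such an exposed face from any support hyperplane at a point in the relative interior of the face), I~may assume that \(\Fscr = \Cscr \cap H_u\) is exposed by a support hyperplane with outer normal~\(u\); analogously, within the affine hull of~\(\Fscr\), the face~\(\Fscr'\) is contained in an exposed sub-face of~\(\Fscr\), exposed by some direction~\(v\) in the linear subspace parallel to that affine hull (so \(v \perp u\) and \(v \neq 0\)).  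A short compactness argument then shows that for all sufficiently small \(\lambda > 0\), the functional \(z \mapsto \iprod{u + \lambda v}{z}\) attains its maximum over~\(\Cscr\) on a subface of~\(\Fscr'\); hence \(u + \lambda v \in \Normal{\Cscr}{x}\), and together with \(u \in \Normal{\Cscr}{x}\) and the linear independence of \(u\) and~\(u + \lambda v\), this yields \(\dim \Normal{\Cscr}{x} \geq 2\).

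For~\(H_{d-1}(\mathcal{S}) = 0\), I~would cover~\(\rbd(\Cscr)\) by finitely many compact ``caps'', each of which is the graph of a convex Lipschitz function \(f_i \colon D_i \to \Reals\) over a compact \((d-1)\)-dimensional disk~\(D_i\), with the graphing direction being a fixed inward normal of~\(\Cscr\).  A boundary point on such a cap is smooth exactly when the corresponding~\(f_i\) is differentiable at the projected point, and by Rademacher's theorem the non-differentiability set of~\(f_i\) has \((d-1)\)-dimensional Lebesgue measure zero.  \Cref{cor:bi-Lipschitz} together with~\eqref{eq:Lebesgue-Hausdorff} then transfer this to \(H_{d-1}\)-measure zero on the cap, and taking the finite union yields \(H_{d-1}(\mathcal{S}) = 0\).

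The main obstacle is the perturbation argument establishing \(\mathcal{U} \subseteq \mathcal{S}\): ensuring that the perturbed direction \(u + \lambda v\) does not shift the maximum of \(\iprod{u + \lambda v}{\cdot}\) outside~\(\Fscr\) requires a careful compactness argument using the strict gap \(\iprod{u}{z} < \iprod{u}{x_0}\) for \(z \in \Cscr \setminus \Fscr\) and \(x_0 \in \Fscr\), combined with an upper bound on \(\abs{\iprod{v}{z}}\) over~\(\Cscr\).  The invocation of Rademacher's theorem is standard but non-trivial; an~alternative self-contained argument would be a direct covering based on arcs in normal cones, but~that approach is noticeably longer.
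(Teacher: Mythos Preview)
Your proposal contains a genuine gap: the inclusion \(\mathcal{U} \subseteq \mathcal{S}\) is false in general, so the reduction to Rademacher's theorem does not go through.  Here is an explicit counterexample in \(\Reals^3\).  Let
\[
  \Cscr \coloneqq \setst[\big]{(a,b,c) \in \Reals^3}{c \leq 0,\ c \geq -\max(0,\sqrt{a^2+b^2}-1)^2,\ a^2+b^2 \leq 4},
\]
a compact convex body (the upper constraint is the hypograph of a concave function).  Its top face \(\Fscr = \setst{(a,b,0)}{a^2+b^2 \leq 1}\) is a proper \(2\)-dimensional face, and \(x \coloneqq (1,0,0) \in \rbd(\Fscr) \subseteq \mathcal{U}\).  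However, \(x\) is a \emph{smooth} boundary point of~\(\Cscr\): the only supporting hyperplane at~\(x\) is \(\set{c=0}\).  Indeed, with \(u = e_3\) (the exposing normal of~\(\Fscr\)) and \(v = e_1\) (which exposes \(\set{x}\) within~\(\Fscr\)), your perturbation \(u+\lambda v\) fails to lie in \(\Normal{\Cscr}{x}\) for any \(\lambda>0\): for \(z_r \coloneqq (r,0,-(r-1)^2)\in\Cscr\) with \(r>1\) we get \(\iprod{u+\lambda v}{z_r - x} = \lambda(r-1) - (r-1)^2 > 0\) whenever \(1 < r < 1+\lambda\).  The compactness argument you allude to cannot rescue this, precisely because the gap \(\iprod{u}{z-x}\) vanishes to second order as \(z \to \Fscr\) while \(\iprod{v}{z-x}\) may vanish only to first order.

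Your second step, that the set~\(\mathcal{S}\) of non-smooth boundary points has \(H_{d-1}\)-measure zero via local Lipschitz graphs and Rademacher, is correct and classical.  But since \(\mathcal{U} \not\subseteq \mathcal{S}\), this does not suffice.  Note, incidentally, that the paper does \emph{not} prove \Cref{thm:Larman}; it is quoted from Larman's 1971 paper and used as a black box.  Larman's original argument is substantially more delicate than a reduction to differentiability almost everywhere, and the theorem is genuinely deeper than the smoothness statement you are trying to reduce it to.
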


As before, we shall need a conic version of Larman's Theorem.
We apply tools similar to the ones used to lift
\Cref{thm:dimH-bd-compact} to \Cref{cor:dimH-bd-cone}:
\begin{theorem}
  \label{thm:conic-Larman}
  Let \(\Ebb\) be an Euclidean space.  If \(\Kscr \subseteq \Ebb\) is a
  pointed closed convex cone with dimension \(d \geq 1\), then
  \begin{equation*}
    H_{d-1}\paren[\Big]{\,
      \bigcup_{\Fscr \faceneq \Kscr}{\rbd(\Fscr)}
    } = 0.
  \end{equation*}
\end{theorem}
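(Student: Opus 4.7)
The plan is to lift Larman's theorem (\Cref{thm:Larman}) from compact convex sets to the conic setting, following the exact template used to derive \Cref{cor:dimH-bd-cone} from \Cref{thm:dimH-bd-compact}.  After identifying \(\Ebb\) with \(\Reals^d\) and applying a rotation, which preserves Hausdorff measures by \Cref{cor:bi-Lipschitz}, we may assume \(\Kscr = \Reals_+(1 \oplus \Cscr)\) for some compact convex \(\Cscr \subseteq \Reals^{d-1}\) of dimension \(d-1\).  The case \(d = 1\) is immediate, since the only proper faces of a ray are \(\emptyset\) and \(\set{0}\), both with empty relative boundary, so assume \(d \geq 2\) henceforth.

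By~\eqref{eq:25}, the nonempty proper faces of \(\Kscr\) are \(\set{0}\) together with the cones \(\Reals_+(1 \oplus \Fscr)\) indexed by the nonempty proper faces \(\Fscr\) of \(\Cscr\).  A short supporting-hyperplane argument yields \(\ri(\Reals_+(1 \oplus \Fscr)) = \Reals_{++}(1 \oplus \ri(\Fscr))\), and hence \(\rbd(\Reals_+(1 \oplus \Fscr)) = \set{0} \cup \Reals_{++}(1 \oplus \rbd(\Fscr))\).  Consequently
\[
  \bigcup_{\Fscr \faceneq \Kscr} \rbd(\Fscr)
  \subseteq
  \set{0} \cup \Reals_+(1 \oplus X),
  \qquad
  X \coloneqq \bigcup_{\emptyset \neq \Fscr \faceneq \Cscr} \rbd(\Fscr) \subseteq \Reals^{d-1}.
\]
Applying \Cref{thm:Larman} to the \((d-1)\)-dimensional compact convex set \(\Cscr\) gives \(H_{d-2}(X) = 0\).

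To finish, cover \(\Reals_+(1 \oplus X)\) by the countable family \(\psi_N([N, N+1] \times X)\) for \(N \in \Naturals\), where \(\psi_N(t,x) \coloneqq (t, tx)\) is Lipschitz on the compact domain \([N,N+1] \times \Cscr\).  By \Cref{thm:Lipschitz} and countable subadditivity of \(H_{d-1}\), it suffices to show \(H_{d-1}([N, N+1] \times X) = 0\) for every \(N\).  Given \(\delta > 0\) and \(\eps > 0\), pick a cover \(\set{\Uscr_i}\) of \(X\) with \(\sum_i \diam(\Uscr_i)^{d-2} < \delta\) and each \(\diam(\Uscr_i) < \eps\); subdivide each cylinder \([N, N+1] \times \Uscr_i\) into at most \(\lceil 1/\diam(\Uscr_i) \rceil\) cells of Euclidean diameter at most \(\sqrt{2}\,\diam(\Uscr_i)\).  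Summing the \((d-1)\)-st powers of the cell diameters gives a bound of the order of~\(\delta\), so letting first \(\eps \downarrow 0\) and then \(\delta \downarrow 0\) yields \(H_{d-1}([N,N+1] \times X) = 0\).  Since the singleton \(\set{0}\) has vanishing \(H_{d-1}\)-measure when \(d \geq 2\), the theorem follows.

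The main obstacle is precisely this covering estimate, which is a product-type inequality promoting \(H_{d-2}(X) = 0\) to \(H_{d-1}([N, N+1] \times X) = 0\).  This is a standard product fact for Hausdorff measures that is not stated among the preceding lemmas, but it admits the elementary combinatorial derivation sketched above; the rest of the proof is entirely parallel to the derivation of \Cref{cor:dimH-bd-cone} from \Cref{thm:dimH-bd-compact}.
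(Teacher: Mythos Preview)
Your argument is correct, but it takes a different route from the paper's.  The paper slices the cone horizontally into the compact convex sets \(\Kscr_N \coloneqq \Kscr \cap \bigl([N,N+1]\oplus\Reals^{d-1}\bigr)\), observes (via elementary convex analysis) the inclusion
\[
  \bigcup_{\Fscr \faceneq \Kscr} \rbd(\Fscr)
  \;\subseteq\;
  \bigcup_{N=0}^{\infty}\,
  \bigcup_{\Fscr_N \faceneq \Kscr_N} \rbd(\Fscr_N),
\]
and then applies \Cref{thm:Larman} directly to each \(d\)-dimensional slice \(\Kscr_N\); countable subadditivity finishes the job.  The key advantage is that Larman's theorem is invoked at the ``right'' dimension \(d\), so no product-type estimate for Hausdorff measures is needed.

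You instead apply \Cref{thm:Larman} one dimension down, to the base \(\Cscr\), obtaining \(H_{d-2}(X)=0\), and then must manufacture the step \(H_{d-2}(X)=0 \Rightarrow H_{d-1}([N,N+1]\times X)=0\) by an explicit covering argument.  That step is indeed standard and your sketch is fine (with the usual caveat that sets of zero diameter in the cover should be slightly fattened so that \(\lceil 1/\diam(\Uscr_i)\rceil\) is finite), but it is an extra ingredient not available from the preceding lemmas.  What you gain is a completely explicit description of the faces of \(\Kscr\) via~\eqref{eq:25} and their relative boundaries; what the paper's approach buys is economy, since slicing keeps everything inside the compact setting where Larman's theorem applies verbatim.
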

\begin{proof}
  The case \(d = 1\) is easy to verify; assume that \(d \geq 2\).  We
  may assume that \(\Ebb = \Reals \oplus \Reals^{\dbar}\) for \(\dbar
  \coloneqq d-1\) and, as in the beginning of the proof of
  \Cref{cor:dimH-bd-cone}, we may assume that \(\Kscr =
  \Reals_+(1\oplus \Cscr)\) for some compact convex set \(\Cscr
  \subseteq \Reals^{\dbar}\) with nonempty interior.  For each \(N \in
  \Naturals\), define the compact convex set \(\Kscr_N \coloneqq \Kscr
  \cap [N,N+1] \oplus \Reals^{\dbar}\).  By elementary convex analysis,
  \begin{equation}
    \label{eq:63}
    \bigcup_{\Fscr \faceneq \Kscr} \rbd(\Fscr)
    \subseteq
    \bigcup_{N=0}^{\infty}
    \bigcup_{\Fscr_N \faceneq \Kscr_N}
    \rbd(\Fscr_N).
  \end{equation}
  Hence,
  \begin{equation*}
    H_{d-1}\paren*{
      \bigcup_{\Fscr \faceneq \Kscr} \rbd(\Fscr)
    }
    \leq
    \sum_{N=0}^{\infty}
    H_{d-1}\paren*{
      \bigcup_{\Fscr_N \faceneq \Kscr_N}
      \rbd(\Fscr_N)
    }
    = 0,
  \end{equation*}
  where we used the fact that each summand is zero by
  \Cref{thm:Larman}.
\end{proof}

\subsection{Generic Failure of Strict Complementarity}
\label{sec:generic-failure}

In this section, we prove one of our main results: strict
complementarity fails generically in the relative boundary of the
normal cone of the elliptope at any of its vertices.

We shall apply \Cref{thm:conic-Larman} to~\(\Psd{n}\).  Let us briefly
recall some well-known descriptions of the faces of the positive
semidefinite cone~\(\Psd{n}\).  Let \(\Lfrak_n\) denote the set of
linear subspaces of~\(\Reals^n\).  For each \(\Lscr \in \Lfrak_n\), denote
\begin{equation}
  \label{eq:26}
  \Fscr_\Lscr
 \coloneqq
 \setst{
   X \in \Psd{n}
 }{
   \Null(X) \supseteq \Lscr
 }
\end{equation}
and note that
\begin{equation}
  \label{eq:27}
  \ri(\Fscr_\Lscr)
  =
  \setst{
    X \in \Psd{n}
  }{
    \Null(X) = \Lscr
  }.
\end{equation}
Then
\begin{gather}
  \label{eq:28}
  \Faces(\Psd{n})
  =
  \set{\emptyset}
  \cup
  \setst{
    \Fscr_\Lscr
  }{
    \Lscr \in \Lfrak_n
  }.
\end{gather}
Note that, for \(\Lscr \in \Lfrak_n\) such that \(\Lscr \neq \Reals^n\), there
is an orthogonal matrix \(Q \in \Reals^{n \times n}\) such that
\begin{equation}
  \label{eq:29}
  \Fscr_\Lscr
  =
  \setst[\bigg]{
    Q
    \begin{bmatrix}
      U & 0 \\
      0 & 0 \\
    \end{bmatrix}
    Q^{\transp}
  }{
    U \in \Psd{r}
  },
\end{equation}
where \(r \coloneqq n - \dim(\Lscr)\).

\begin{lemma}
  \label{lem:low-rank-negligible}
  Let \(n \geq 2\) be an integer.  Then the property ``\(C \mapsto
  \rank(C) = n-1\)'' holds generically in \(\bd(\Psd{n})\).
\end{lemma}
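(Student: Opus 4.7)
The plan is to apply Theorem~\ref{thm:conic-Larman} directly to $\Psd{n}$. Set $N \coloneqq \dim(\Sym{n}) = n(n+1)/2$. Since $\Psd{n}$ is a pointed closed convex cone with $\dim(\Psd{n}) = N \geq 3$ (for $n \geq 2$), Corollary~\ref{cor:dimH-bd-cone} gives $\dim_H(\bd(\Psd{n})) = N-1$. The goal thus reduces to showing that the ``bad'' set
\[
\Bcal \coloneqq \setst{C \in \bd(\Psd{n})}{\rank(C) \leq n-2}
\]
satisfies $H_{N-1}(\Bcal) = 0$.

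The key step is to identify $\Bcal$ as the union of relative boundaries of proper faces of $\Psd{n}$. Using the characterization~\eqref{eq:28} of $\Faces(\Psd{n})$ together with~\eqref{eq:27}, each proper face $\Fscr_\Lscr$ (with $\Lscr \neq \set{0}$) satisfies
\[
\rbd(\Fscr_\Lscr) = \setst{X \in \Psd{n}}{\Null(X) \supsetneq \Lscr}.
\]
I claim that $\Bcal = \bigcup_{\Fscr \faceneq \Psd{n}} \rbd(\Fscr)$. For the forward inclusion, if $\rank(C) \leq n-2$ then $\dim \Null(C) \geq 2$, so any nonzero proper subspace $\Lscr$ of $\Null(C)$ witnesses $C \in \rbd(\Fscr_\Lscr)$. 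Conversely, if $C \in \rbd(\Fscr_\Lscr)$ for some $\Lscr \neq \set{0}$, then $\Null(C) \supsetneq \Lscr$ gives $\dim \Null(C) \geq \dim(\Lscr) + 1 \geq 2$, i.e.\ $\rank(C) \leq n-2$.

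With this equality in hand, Theorem~\ref{thm:conic-Larman} applied to the pointed closed convex cone $\Kscr = \Psd{n}$ of dimension $d = N$ immediately yields
\[
H_{N-1}(\Bcal) = H_{N-1}\paren[\bigg]{\bigcup_{\Fscr \faceneq \Psd{n}} \rbd(\Fscr)} = 0,
\]
which is the desired conclusion. I expect no real obstacle in this proof: the only thing to verify carefully is the combinatorial identification of $\Bcal$ with the union of relative boundaries of proper faces, which is a direct consequence of the null-space parametrization of $\Faces(\Psd{n})$.
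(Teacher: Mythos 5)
Your proof is correct and follows essentially the same route as the paper: compute \(\dim_H(\bd(\Psd{n}))\) via \Cref{cor:dimH-bd-cone}, observe that the rank-deficient matrices lie in \(\bigcup_{\Fscr \faceneq \Psd{n}} \rbd(\Fscr)\), and invoke the conic Larman theorem (\Cref{thm:conic-Larman}). The only difference is that you establish the equality of the bad set with the union of relative boundaries of proper faces, whereas the paper only needs (and only proves) the inclusion \(\subseteq\).
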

\begin{proof}
  Set \(d \coloneqq \dim_H(\Psd{n})\).  Note that \(d-1 =
  \dim_H(\bd(\Psd{n}))\) by \Cref{cor:dimH-bd-cone}.  Let \(X \in
  \bd(\Psd{n})\) such that \(\rank(X) = n-1\) fails.  Then \(\rank(X)
  \leq n-2\).  For each nonzero \(h \in \Null(X)\), let \(\Lscr\) be the
  linear subspace of~\(\Reals^n\) spanned by~\(h\) and note that \(X
  \in \rbd(\Fscr_\Lscr)\), following the notation from~\eqref{eq:26}.  Hence,
  \begin{equation*}
    \setst{
      X \in \bd(\Psd{n})
    }{
      \rank(X) \neq n-1
    }
    =
    \setst{
      X \in \Psd{n}
    }{
      \rank(X) \leq n-2
    }
    \subseteq
    \bigcup_{\mathclap{\Fscr \faceneq \Psd{n}}} \rbd(\Fscr).
  \end{equation*}
  The \((d-1)\)-dimensional Hausdorff measure of the set on the RHS
  above is zero by \Cref{thm:conic-Larman}.
\end{proof}

We are ready to prove one of our main results:
\begin{theorem}
  \label{thm:generic-sc-failure}
  Let \(n \geq 3\), and let \(\Xb\) be a vertex of~\(\Elliptope{n}\).
  Then the property ``\(C \mapsto\) strict complementarity holds for
  \eqref{eq:maxcut-sdp}'' fails generically on \(\rbd(\Psd{n} \cap
  \set{\Xb}^{\perp})\).
\end{theorem}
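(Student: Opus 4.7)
The plan is to combine the sufficient condition for failure of strict complementarity from \Cref{thm:sc-failure-high-rank} with the Hausdorff-measure machinery developed above. First, since the automorphism group of~\(\Elliptope{n}\) acts transitively on its vertices through maps of the form \(X \mapsto \Diag(s)X\Diag(s)\) with \(s \in \set{\pm1}^n\), each of which is an isometry of~\(\Sym{n}\) and hence preserves Hausdorff measure by \Cref{cor:bi-Lipschitz}, I~reduce to the case \(\Xb = \oprodsym{\ones}\). The set \(\Kscr \coloneqq \Psd{n} \cap \set{\oprodsym{\ones}}^{\perp}\) consists of the \(S \in \Psd{n}\) with \(S\ones = 0\) and is linearly (even rank-preservingly) isometric to~\(\Psd{n-1}\) via a conjugation by an orthogonal matrix sending \(e_n\) to \(\tfrac{1}{\sqrt{n}}\ones\); in particular, \(\Kscr\) is a pointed closed convex cone of dimension \(d \coloneqq \binom{n}{2}\), and \Cref{cor:dimH-bd-cone} yields \(\dim_H(\rbd(\Kscr)) = d-1\).

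The decisive input is \Cref{thm:sc-failure-high-rank}, which guarantees that strict complementarity fails whenever \(S \in \rbd(\Kscr)\) satisfies both \(\rank(S) = n-2\) and \(\Null(S) = \linspan\set{\ones,h}\) for some \(h \perp \ones\) with at least three distinct coordinates. Hence the set \(\mathcal{G}\) of those \(S \in \rbd(\Kscr)\) for which strict complementarity could still hold is contained in \(\mathcal{G}_1 \cup \mathcal{G}_2\), where \(\mathcal{G}_1\) collects the \(S \in \rbd(\Kscr)\) with \(\rank(S) \leq n-3\) and \(\mathcal{G}_2\) collects the \(S \in \rbd(\Kscr)\) with \(\rank(S) = n-2\) and \(\Null(S) = \linspan\set{\ones,h}\) for some nonzero \(h \perp \ones\) having at most two distinct coordinates. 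It therefore suffices to show \(H_{d-1}(\mathcal{G}_1) = H_{d-1}(\mathcal{G}_2) = 0\).

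For \(\mathcal{G}_1\), the isometry \(\Psd{n-1} \cong \Kscr\) is rank-preserving and sends \(\bd(\Psd{n-1})\) onto \(\rbd(\Kscr)\), so \Cref{lem:low-rank-negligible} applied to~\(\Psd{n-1}\) (valid since \(n-1 \geq 2\)) transports directly to \(H_{d-1}(\mathcal{G}_1) = 0\). For \(\mathcal{G}_2\), observe that up to scaling a nonzero vector \(h \perp \ones\) taking at most two distinct values is determined by a proper nonempty subset \(A \subsetneq [n]\) (the level set of one of its two values, the other being forced by orthogonality to~\(\ones\)). For each of the finitely many such~\(A\), producing a fixed direction \(h_A\), the admissible \(S\) lie in \(\setst{S \in \Psd{n}}{S\ones = 0,\, Sh_A = 0}\), a face of~\(\Kscr\) linearly isometric to~\(\Psd{n-2}\) and thus of Hausdorff dimension \(\binom{n-1}{2}\). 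Since \(\binom{n-1}{2} < d-1\) for \(n \geq 3\), \Cref{prop:dim-countable} applied to this finite union gives \(\dim_H(\mathcal{G}_2) \leq \binom{n-1}{2} < d-1\), whence \(H_{d-1}(\mathcal{G}_2) = 0\) by~\eqref{eq:17}.

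The subtle point is that rank-\((n-2)\) matrices are \emph{not} negligible in \(\rbd(\Kscr)\)—they form an open dense subset of it—so the generic failure of strict complementarity truly relies on the nullspace criterion of \Cref{thm:sc-failure-high-rank}, together with the combinatorial observation that vectors \(h \perp \ones\) with at most two distinct coordinates are confined to a strictly lower-dimensional union of finitely many faces of~\(\Kscr\). Everything else is a routine reduction of the main statement to \Cref{lem:low-rank-negligible} and \Cref{prop:dim-countable}.
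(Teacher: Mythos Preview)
Your proof is correct and follows the same overall architecture as the paper's: reduce to \(\Xb = \oprodsym{\ones}\) via isometric automorphisms, identify \(\Kscr \coloneqq \Psd{n}\cap\{\oprodsym{\ones}\}^{\perp}\) with \(\Psd{n-1}\) to compute \(\dim_H(\rbd(\Kscr)) = \binom{n}{2}-1\), and then use \Cref{thm:sc-failure-high-rank} to split the exceptional set into a low-rank part (handled by \Cref{lem:low-rank-negligible}) and a ``degenerate nullspace'' part.

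Where you genuinely diverge from the paper is in the treatment of the degenerate-nullspace set \(\mathcal{G}_2\). The paper observes that a vector \(h \perp \ones\) with at most two distinct entries must, by pigeonhole on the first three coordinates, satisfy \(h \perp (e_i-e_j)\) for some \(\{i,j\}\subseteq\{1,2,3\}\); it then covers by the three sets \(\Dscr_{ij}\), transports each via an orthogonal conjugation to a set \(\Dscr' \subseteq \Psd{n-1}\) of corank-one matrices with a fixed vector in their range, and bounds \(\dim_H(\Dscr')\) through an explicit Schur-complement-style parametrization \(\varphi_k\). You instead note that, up to scaling, such an \(h\) is one of the finitely many vectors \(h_A\) indexed by proper nonempty \(A\subsetneq[n]\), so \(\mathcal{G}_2\) sits inside a finite union of faces of \(\Kscr\), each isometric to \(\Psd{n-2}\) and hence of dimension \(\binom{n-1}{2} < \binom{n}{2}-1\). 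Your route is shorter and conceptually cleaner (it also yields the sharper dimension bound \(\binom{n-1}{2}\) rather than \(\binom{n}{2}-2\)), at the harmless cost of using \(2^n-2\) covering sets instead of three; the paper's parametrization argument, on the other hand, is more hands-on and would generalize to constraints on the null direction that are linear rather than finitely supported.
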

\begin{proof}
  By \Cref{thm:vertices-elliptope} and the discussion of linear
  automorphisms of~\(\Elliptope{n}\) from \Cref{sec:vertices}, we may
  assume that \(\Xb = \oprodsym{\ones}\).  Set \[m \coloneqq n-1.\]
  Let \(Q \in \Reals^{n \times n}\) be an orthogonal matrix such that
  \(Q^{\transp} e_n = n^{-1/2}\ones\) and \(Q^{\transp} e_{m} =
  2^{-1/2}(e_1-e_2)\).  Using the map \(M \in \Sym{n} \mapsto
  QMQ^{\transp}\) and \Cref{cor:bi-Lipschitz}, we find that
  \(\rbd(\Psd{n} \cap \set{\Xbar}^{\perp})\) and \(\rbd(\Psd{n} \cap
  \set{\oprodsym{e_n}}^{\perp})\) have the same Hausdorff dimension.
  Since the cone \(\Psd{n} \cap \set{\oprodsym{e_n}}^{\perp}\) is an
  embedding of \(\Psd{m}\) into~\(\Psd{n}\), the Hausdorff dimension
  of \(\rbd(\Psd{n} \cap \set{\oprodsym{e_n}}^{\perp})\) is
  \(\dim_H(\Psd{m})-1\) by \Cref{cor:dimH-bd-cone}.  Hence,
  \begin{equation}
    \label{eq:30}
    d \coloneqq
    \dim_H\paren*{
      \rbd\paren*{
        \Psd{n} \cap \set{\Xbar}^{\perp}
      }
    }
    = \binom{n}{2}-1.
  \end{equation}

  Set
  {\(
    \Cscr \coloneqq \setst[\big]{
      C \in \rbd(\Psd{n} \cap \set{\Xbar}^{\perp})
    }{
      \text{strict complementarity holds in~\eqref{eq:maxcut-sdp}}
    }\)}.
  By \Cref{thm:sc-failure-high-rank},
  \begin{equation}
    \label{eq:31}
    \Cscr \subseteq
    \Dscr_0
    \cup
    \Dscr_{12} \cup \Dscr_{13} \cup \Dscr_{23}
  \end{equation}
  where
  \begin{gather*}
    \Dscr_0 \coloneqq
    \setst[\big]{
      C \in \Psd{n} \cap \set{\Xbar}^{\perp}
    }{
      \rank(C) \leq n-3
    },
    \\
    \Dscr_{ij} \coloneqq
    \setst[\big]{
      C \in \Psd{n}
    }{
      \exists h \in \set{\ones,e_i-e_j}^{\perp},\,
      h \neq 0,\,
      \Null(C) = \linspan\set{\ones,h}
    },
  \end{gather*}
  for each \(i,j \in [n]\).  Clearly all the sets \(\Dscr_{ij}\) have
  the same \(d\)-dimensional Hausdorff measures, so it suffices to
  prove that
  \begin{gather}
    \label{eq:32}
    H_d(\Dscr_0) = 0,
    \\
    \label{eq:33}
    H_d(\Dscr_{12}) = 0.
  \end{gather}

  By using the map \(M \in \Sym{n} \mapsto QMQ^{\transp}\) and
  \Cref{cor:bi-Lipschitz}, \(\Dscr_0\) and \(\setst{C \in
    \Psd{m}}{\rank(C) \leq m-2}\) have the same \(d\)-dimensional
  Hausdorff measure.  Hence, \eqref{eq:32} follows from
  \Cref{lem:low-rank-negligible} and \Cref{cor:dimH-bd-cone}.  Again
  using the map \(M \in \Sym{n} \mapsto QMQ^{\transp}\) and
  \Cref{cor:bi-Lipschitz}, we find that \(H_d(\Dscr_{12}) =
  H_d(\Dscr')\) where
  \begin{equation*}
    \Dscr' \coloneqq
    \setst{
      U \in \Psd{m}
    }{
      \rank(U) = m-1, e_m \in \Image(U)
    }.
  \end{equation*}
  Hence, to prove~\eqref{eq:33} and thus the theorem, it suffices to
  prove that
  \begin{equation}
    \label{eq:34}
    H_d(\Dscr') = 0.
  \end{equation}

  For each \(k \in [m-1]\) define the permutation matrix \(P_k
  \coloneqq \sum_{i \in [m] \setminus \set{k,m}} \oprodsym{e_i} +
  \oprod{e_k}{e_m} + \oprod{e_m}{e_k} \in \Sym{m}\).  Set \(P_m
  \coloneqq I\).  For each \(k \in [m]\) define the map \(\varphi_k
  \colon \Pd{m-1} \oplus \Reals^{m-1} \to \Sym{m}\) by setting
  \begin{equation*}
    \varphi_k(A \oplus c)
    \coloneqq
    P_k^{\transp}
    \begin{bmatrix}
      A             & Ac           \\
      c^{\transp} A & \qform{A}{c} \\
    \end{bmatrix}
    P_k.
  \end{equation*}
  It is easy to verify that
  \begin{gather}
    \label{eq:35}
    \setst{U \in \Psd{m}}{\rank(U) = m-1}
    =
    \bigcup_{k \in [m]} \varphi_k(\Pd{m-1} \oplus \Reals^{m-1}),
    \\
    \Null(\varphi_k(A \oplus c))
    =
    P_k\linspan\set{-c \oplus 1}
    \qquad
    \forall A \oplus c \in \Pd{m-1} \oplus \Reals^{m-1}.
  \end{gather}
  Let \(U \in \Psd{m}\) with \(\rank(U) = m-1\), and let \(k \in [m]\)
  and \(A \oplus c \in \Pd{m-1} \oplus \Reals^{m-1}\) such that \(U =
  \varphi_k(A \oplus c)\).  Then \(e_m \in \Image(U)\) is equivalent
  to \(e_m \perp P_k (-c \oplus 1)\), which is equivalent to \(k \in
  [m-1]\) and \(c \perp e_k\).  Hence,
  \begin{equation}
    \label{eq:36}
    \Dscr'
    =
    \bigcup_{\mathclap{k \in [m-1]}} \varphi_k(\Pd{m-1} \oplus \set{e_k}^{\perp}).
  \end{equation}
  Let \(k \in [m-1]\).  Since each entry of \(\varphi_k(A \oplus
  c)\) is (component-wise) polynomial function of the input, the map \(\varphi_k\) is
  Lipschitz continuous on any compact subset of the domain.  It
  follows from \Cref{prop:dim-countable} that
  \begin{equation}
    \label{eq:37}
    \dim_H(\varphi(\Pd{m-1} \oplus \set{e_k}^{\perp}))
    \leq
    \binom{m}{2} + m - 2
    =
    d - 1;
  \end{equation}
  note that the subspace \(\set{e_k}^{\perp}\) in the LHS is
  \((m-2)\)-dimensional, as this subspace is the set of vectors
  in~\(\Reals^{m-1}\) orthogonal to~\(e_k\).  Now~\eqref{eq:34}
  follows from~\eqref{eq:36} and~\eqref{eq:37}.
\end{proof}

\section{Failure of Strict Complementarity for Rank-One Objectives}
\label{sec:rank-one2}

In \Cref{sec:generic-sc-failure}, we zoomed into the boundary of the
normal cone of an arbitrary vertex of the elliptope and proved that
strict complementarity fails generically there.  \emph{Informally}, we
might say that with zero ``probability'' a ``uniformly chosen''
objective function in the boundary of such normal cone yields an SDP
that satisfies strict complementarity.  Now we zoom in even
further in that boundary, into the set of negative semidefinite
rank-one objectives, and consider again how often strict
complementarity holds.  We will state and prove a self-contained 
result in \Cref{thm:rank-one} below.  However, in order to motivate
the objects of the construction and the intermediate results, we
start with an informal discussion.

Assume throughout this discussion that \(n \geq 4\).  We will
normalize the ``sample space'' so that we can have a
probability space.  For the sake of discussion, let us focus our
attention on the vertex \(\oprodsym{\ones}\) of~\(\Elliptope{n}\) and
consider the sample space to be
\begin{equation}
  \Omega_M
  \coloneqq
  \setst{
    C \in \bd(\Normal{\Elliptope{n}}{\oprodsym{\ones}})
  }{
    C \preceq 0,\,
    \rank(C) = 1,\,
    \pnorm[\infty]{\matvec(C)} = 1
  }.
\end{equation}
Accordingly, equip \(\Sym{n}\) with the norm \(X \in \Sym{n} \mapsto
\pnorm[\infty]{\matvec(X)}\).  Set \(d \coloneqq \dim_H(\Omega_M)\).
In~order to obtain a probability space on~\(\Omega_M\), we will define
a probability measure
\begin{equation}
\label{eq:38}
  \Prob_M(\Ascr_M)
  \coloneqq
  \frac{H_d(\Ascr_M)}{H_d(\Omega_M)}
\end{equation}
over all \(H_d\)-measurable subsets~\(\Ascr_M\) of~\(\Omega_M\); we
shall prove that \(H_{n-2}(\Omega_M) \in (0,\infty)\), so
that~\eqref{eq:38} is properly defined and \(d=n-2\).  Our goal is to
prove that the probability of the event
\begin{equation}
\label{eq:39}
  \Gscr_M
  \coloneqq
  \setst{
    C \in \Omega_M
  }{
    \text{strict complementarity holds for~\eqref{eq:maxcut-sdp} with~\(C\)}
  }.
\end{equation}
lies in \((0,1)\).

In order to achieve this, we shall reduce the problem to the space of
vectors that generate the rank-one tensors in~\(\Omega_M\) and
\(\Gscr_M\), which lie in the matrix space.  In order to carry results
back and forth between these spaces, we rely on
\Cref{cor:bi-Lipschitz}.  For each \(s \in \set{\pm1}^n\), define
\begin{gather}
  \label{eq:40}
  \Reals_s^n \coloneqq \Diag(s)\Reals_+^n,
  \\
  \label{eq:41}
  \varphi_s \colon b \in \Reals_s^n \cap \bd(\Ball_{\infty}) \mapsto -\oprodsym{b}.
\end{gather}
Equip \(\Reals^n\) with the norm \(x \in \Reals^n \mapsto
\pnorm[\infty]{x}\).  We shall split our analysis to each of the
\(2^n\) bi-Lipschitz maps~\(\varphi_s\), one for each chamber/orthant
of~\(\Reals^n\), according to their sign vectors:
\begin{theorem}
  \label{thm:bi-Lipschitz-tensor}
  Let \(s \in \set{\pm1}^n\).  Then the map \(\varphi_s\) defined
  in~\eqref{eq:41} is bi-Lipschitz continuous with Lipschitz
  constants~2 and~1, where we equip the domain with the
  \(\infty\)-norm, and we equip the range with the norm
  \(\pnorm[\infty]{\matvec(\cdot)}\).
\end{theorem}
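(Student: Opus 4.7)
The plan is to verify the two Lipschitz bounds directly from the definition, exploiting the two domain constraints built into $\Reals_s^n \cap \bd(\Ball_\infty)$: the normalization $\|b\|_\infty = 1$ and the sign-pattern condition $b \in \Reals_s^n$. Throughout, fix $b, b' \in \Reals_s^n \cap \bd(\Ball_\infty)$.

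For the forward bound (Lipschitz constant $L_1 = 2$), I would use the entrywise telescoping identity
\[
\bigl(\varphi_s(b') - \varphi_s(b)\bigr)_{ij} = b_i b_j - b'_i b'_j = b_i(b_j - b'_j) + b'_j(b_i - b'_i).
\]
Since $|b_k|, |b'_k| \leq 1$ for every $k \in [n]$, each summand has absolute value at most $\|b - b'\|_\infty$, and taking the maximum over $(i,j)$ yields the desired estimate $\|\matvec(\varphi_s(b) - \varphi_s(b'))\|_\infty \leq 2 \|b - b'\|_\infty$.

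For the reverse bound (inverse Lipschitz constant $L_2 = 1$), the strategy is to exhibit a single entry $(i^*, j^*)$ whose magnitude already dominates $\|b - b'\|_\infty$. Pick $i^* \in [n]$ attaining $\|b - b'\|_\infty = |b_{i^*} - b'_{i^*}|$ and, after possibly swapping $b$ with $b'$, assume $|b_{i^*}| \geq |b'_{i^*}|$. The key move is to choose $j^* \in [n]$ with $|b_{j^*}| = 1$, which exists by the normalization $b \in \bd(\Ball_\infty)$; then $b_{j^*} = s_{j^*}$. Because $b, b' \in \Reals_s^n$ forces both $b_{i^*}$ and $b'_{i^*}$ to carry the common sign $s_{i^*}$ (or vanish), the cross term unfolds into a difference of magnitudes:
\[
|b_{i^*} b_{j^*} - b'_{i^*} b'_{j^*}| = |b_{i^*}| - |b'_{i^*}|\,|b'_{j^*}| \geq |b_{i^*}| - |b'_{i^*}| = |b_{i^*} - b'_{i^*}|,
\]
where the first equality drops the outer absolute value using $|b_{i^*}| \geq |b'_{i^*}| \geq |b'_{i^*}|\,|b'_{j^*}|$, and the last equality uses that two reals of the same sign differ by the difference of their magnitudes.

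There is no serious obstacle, but the conceptual content worth noting is that both domain restrictions are pulling their weight in the reverse bound: without the sign-pattern restriction, $\varphi_s$ would identify $b$ with $-b$ and fail to be injective at all, while the normalization $\|b\|_\infty = 1$ is precisely what guarantees the probing coordinate $j^*$ that converts linear differences in $b$ into differences of entries of $\oprodsym{b}$ without any loss of constant. The forward bound, by contrast, uses only $|b_k|, |b'_k| \leq 1$ and would hold verbatim on the full $\infty$-ball.
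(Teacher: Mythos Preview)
Your proof is correct and follows essentially the same strategy as the paper's. The forward bound is identical in spirit (the paper writes it via the polarization identity \(2(\oprodsym{x}-\oprodsym{y}) = \oprod{(x-y)}{(x+y)} + \oprod{(x+y)}{(x-y)}\), which is just your telescoping rearranged). For the reverse bound, both arguments exploit an ``anchor'' coordinate where one of the two vectors attains \(\pm 1\); the paper fixes anchors \(k,\ell\) for \emph{both} vectors up front and then runs a four-case analysis on the signs and relative sizes of \(x_j,y_j\) for each \(j\), whereas you first locate the worst coordinate \(i^*\), swap \(b,b'\) so that \(|b_{i^*}|\geq|b'_{i^*}|\), and then pick the anchor \(j^*\) from the vector with the larger magnitude. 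Your adaptive choice collapses the paper's case analysis into a single line, which is a mild streamlining but not a different idea.
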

\begin{proof}
  To see that \(\varphi_s\) is Lipschitz continuous with Lipschitz
  constant~2, let \(x,y \in \Reals_s^n \cap \bd(\Ball_{\infty})\) and
  note that
  \begin{equation*}
    \pnorm[\infty]{2\matvec(\oprodsym{x}-\oprodsym{y})}
    =
    \pnorm[\infty]{
      \matvec\sqbrac{\oprod{(x-y)}{(x+y)} + \oprod{(x+y)}{(x-y)}
      }
    }
    \leq
    2 \pnorm[\infty]{x+y}\pnorm[\infty]{x-y}
    \leq
    4 \pnorm[\infty]{x-y}.
  \end{equation*}
  The proof that \(\varphi_s^{-1}\) is Lipschitz continuous with
  Lipschitz constant~1 is also simple but it involves case analysis.
  Set \(A \coloneqq \oprodsym{x}-\oprodsym{y}\).  Let \(k \in [n]\)
  such that \(\abs{x_k} = 1\), so \(x_k = s_k\).  Similarly, let
  \(\ell \in [n]\) such that \(\abs{y_{\ell}} = 1\), so \(y_{\ell} =
  s_{\ell}\).  Let \(j \in [n]\).  We shall make use of the following
  facts:
  \begin{gather*}
    \alpha_k
    \coloneqq
    \frac{y_k}{s_k} \in [0,1],
    \qquad
    \beta_{\ell}
    \coloneqq
    \frac{x_{\ell}}{s_{\ell}} \in [0,1],
    \qquad
    \abs{A_{kj}}
    =
    \abs*{x_j - \alpha_k y_j},
    \qquad
    \abs{A_{\ell j}}
    =
    \abs*{\beta_{\ell} x_j - y_j}.
  \end{gather*}
  We consider 4 cases, according to which of \(x_j\) or \(y_j\) is
  largest, and according to their signs; note that both \(x_j\) and
  \(y_j\) have the same sign.

  We have
  \begin{align*}
    x_j \geq y_j \geq 0
    & \implies
    0 \leq \abs{x_j-y_j} = x_j - y_j \leq x_j - \alpha_k y_j = \abs{A_{kj}};
    \\
    y_j \geq x_j \geq 0
    & \implies
    0 \leq \abs{x_j-y_j} = y_j - x_j \leq y_j - \beta_{\ell} x_j = \abs{A_{\ell j}};
    \\
    0 \geq x_j \geq y_j
    & \implies
    0 \leq \abs{x_j-y_j} = x_j - y_j \leq \beta_{\ell} x_j - y_j = \abs{A_{\ell j}};
    \\
    0 \geq y_j \geq x_j
    & \implies
    0 \leq \abs{x_j-y_j} = y_j - x_j \leq \alpha_k y_j - x_j = \abs{A_{kj}}.
  \end{align*}
  Hence, \(\pnorm[\infty]{x-y} \leq
  \pnorm[\infty]{\matvec(\oprodsym{x}-\oprodsym{y})}\).
\end{proof}

Note that restricting the domain of~\(\varphi_s\) in
\Cref{thm:bi-Lipschitz-tensor} to chambers of~\(\Reals^n\) is
necessary.  Indeed, consider \(x \coloneqq (1,-1,\eps)^{\transp}\) and
\(y \coloneqq (-1,1,0)^{\transp}\), for an arbitrary \(\eps \in
(0,1)\).  Then \(\pnorm[\infty]{x-y} = 2\) but
\(\pnorm[\infty]{\matvec(\oprodsym{x}-\oprodsym{y})} = \eps\).

Next we relate the description for~\(\Omega_M\) to the vectors that
appear in the rank-one tensors:
\begin{proposition}
  \label{prop:omega}
  For \(n \geq 3\), we have
  \begin{equation}
    \label{eq:42}
    \Omega_M
    =
    \setst*{
      -\oprodsym{b}
    }{
      b \in \Reals^n
      \text{ and }
      \begin{array}[!h]{l}
        \text{%
          either
          \(b = e_i - \alpha e_j\) for some distinct \(i,j \in [n]\)
          and \(\alpha \in [0,1]\),
        }
        \\
        \text{or }
        (b \perp \ones \text{ and }
        \card{\supp(b)} \geq 3 \text{ and }
        \pnorm[\infty]{b} = 1)
      \end{array}
    }
  \end{equation}
\end{proposition}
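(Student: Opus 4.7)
The plan is to prove both inclusions by leveraging the unique decomposition from~\Cref{le:unique} combined with the boundary characterization in~\eqref{eq:bd-normal-vx}. Writing $s \coloneqq \iprodt{\ones}{b}$, the only way to represent $C \coloneqq -\oprodsym{b}$ as $\Diag(y) - Y$ with $Y \succeq 0$ and $Y \ones = 0$ is to take $y = -sb$ and $Y = \oprodsym{b} - s \Diag(b)$.  Consequently $-\oprodsym{b} \in \Omega_M$ if and only if $\pnorm[\infty]{b} = 1$, $Y \succeq 0$, and $\rank(Y) \leq n - 2$.  The conditions $C \preceq 0$ and $\rank(C) = 1$ are automatic when $b \neq 0$, and the normalization $\pnorm[\infty]{\matvec(-\oprodsym{b})} = 1$ is equivalent to $\pnorm[\infty]{b} = 1$.

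For the forward inclusion ($\supseteq$), I would substitute each alternative into the formula for $Y$.  If $b = e_i - \alpha e_j$ with $\alpha \in [0, 1]$, then $s = 1 - \alpha$, and a short entry-wise computation collapses $Y$ to $\alpha \oprodsym{(e_i - e_j)}$, which is PSD of rank at most one.  If $b \perp \ones$ with $\card{\supp(b)} \geq 3$ and $\pnorm[\infty]{b} = 1$, then $s = 0$, so $Y = \oprodsym{b}$ is immediately PSD of rank one with $Y \ones = (\iprodt{\ones}{b}) b = 0$.  In both cases $\rank(Y) \leq 1 \leq n - 2$ since $n \geq 3$, and all the remaining defining conditions of $\Omega_M$ are trivial.

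For the reverse inclusion ($\subseteq$), given $C \in \Omega_M$, write $C = -\oprodsym{b}$ with $\pnorm[\infty]{b} = 1$ and flip the sign of $b$ if needed so that $s \geq 0$.  In the $s = 0$ branch we have $b \perp \ones$: $\card{\supp(b)} = 1$ is incompatible with $b \perp \ones$, $\card{\supp(b)} = 2$ together with $b \perp \ones$ and $\pnorm[\infty]{b} = 1$ forces $b = \pm(e_i - e_j)$ (the first alternative with $\alpha = 1$), and $\card{\supp(b)} \geq 3$ is exactly the second alternative.  In the $s > 0$ branch, the diagonal condition $Y_{ii} = b_i(b_i - s) \geq 0$ forces each $b_i \in (-\infty, 0] \cup [s, +\infty)$; the $2 \times 2$ principal minor on any pair $\{i,j\}$ with $b_i, b_j \geq s$ has determinant $s\, b_i b_j (s - b_i - b_j) < 0$, ruling out two ``large'' coordinates, so exactly one index $i_0$ satisfies $b_{i_0} \geq s$.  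Normalization combined with $b_j \leq 0$ for $j \neq i_0$ then forces $b_{i_0} = 1$.

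The step I expect to be the main obstacle is the remainder of the $s > 0$ branch: concluding that $b$ has at most one strictly negative coordinate, so that $b = e_{i_0} - \alpha e_j$ for some $j \neq i_0$ and some $\alpha \in [0, 1)$.  The $2 \times 2$ minor test is already exhausted once $\card{P} = 1$, so I would attack this by factoring the quadratic form $h \mapsto h^{\transp} Y h$ via Cauchy--Schwarz to describe $\Null(Y)$ explicitly, use it to deduce $\rank(Y) = \card{\supp(b)} - 1$, and then combine the rank cap $\rank(Y) \leq n - 2$ with additional constraints coming from complementary slackness against vertices of $\Elliptope{n}$ other than $\oprodsym{\ones}$ to eliminate the $\card{\supp(b)} \geq 3$ possibilities.
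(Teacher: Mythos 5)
Your reduction of membership in \(\Omega_M\) to the three conditions ``\(\pnorm[\infty]{b}=1\), \(Y\coloneqq\oprodsym{b}-s\Diag(b)\succeq 0\), and \(\rank(Y)\leq n-2\)'' (via \Cref{le:unique} and~\eqref{eq:bd-normal-vx}) is correct, and it is in fact more careful than the paper's own proof, which works only with \(Y\succeq0\) and never invokes the boundary/rank condition. Your forward inclusion, your \(s=0\) branch, and your rank formula \(\rank(Y)=\card{\supp(b)}-1\) (valid for \(s>0\)) are all fine. The gap is exactly where you predicted, and the tools you propose cannot close it. The rank cap \(\rank(Y)\leq n-2\) only yields \(\card{\supp(b)}\leq n-1\), which for \(n\geq4\) leaves the cases \(3\leq\card{\supp(b)}\leq n-1\) untouched; and ``complementary slackness against vertices other than \(\oprodsym{\ones}\)'' is simply not available, since the definition of \(\Omega_M\) imposes optimality only at the single vertex \(\oprodsym{\ones}\) and gives you no condition at any other vertex.

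More seriously, that remaining case cannot be closed, because the claimed equality~\eqref{eq:42} fails there once \(n\geq4\). Take \(n=4\) and \(b=(1,-\tfrac12,-\tfrac14,0)^{\transp}\), so \(s=\tfrac14\). Then \(Y=\oprodsym{b}-s\Diag(b)\) has zero row sums, its nonzero eigenvalues are \(\tfrac{5\pm\sqrt{19}}{8}>0\), and \(\rank(Y)=2=n-2\); hence \(-\oprodsym{b}=\Diag(-sb)-Y\) lies in \(\bd(\Normal{\Elliptope{4}}{\oprodsym{\ones}})\) by~\eqref{eq:bd-normal-vx}, is negative semidefinite of rank one, and is normalized, so \(-\oprodsym{b}\in\Omega_M\) --- yet \(b\) is neither of the form \(\pm(e_i-\alpha e_j)\) nor orthogonal to \(\ones\). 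The paper's proof of the inclusion `\(\subseteq\)' disposes of this case by asserting that nonnegativity of the \(2\times2\) principal minor of \(\oprodsym{b}-\beta\Diag(b)\) on \(\set{1,i}\) yields \(b_1+b_i\leq\beta\); but that minor equals \(\beta b_1b_i\paren{\beta-b_1-b_i}\), and with \(b_1>0>b_i\) and \(\beta>0\) its nonnegativity gives the reverse inequality \(\beta\leq b_1+b_i\), which is consistent with all the other constraints (as the example above shows). So the obstacle you flagged reflects a genuine problem with the statement for \(n\geq4\), not a missing trick. Your strategy does, however, complete a correct proof for \(n=3\): there the rank cap reads \(\card{\supp(b)}-1\leq1\) whenever \(s>0\), which forces the \(e_i-\alpha e_j\) form directly.
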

\begin{proof}
  We first prove the inclusion `\(\supseteq\)'.  If \(b \perp \ones\)
  and \(\pnorm[\infty]{b} = 1\), it follows from~\eqref{eq:normal}
  that \(-\oprodsym{b} \in \Omega_M\).  Suppose that \(b = e_i -
  \alpha e_j\) for distinct \(i,j \in [n]\) and \(\alpha \in [0,1]\).
  Set \(\beta \coloneqq 1 - \alpha \in [0,1]\) and \(y \coloneqq
  -\beta b\).  It is easy to verify that \(S \coloneqq \Diag(y) +
  \oprodsym{b} \succeq 0\) and \(S\ones = 0\); now \(-\oprodsym{b} =
  \Diag(y) - S \in \Omega_M\) follows from~\eqref{eq:normal}.  In both
  cases, we rely on \(n \geq 3\) to ensure that \(-\oprodsym{b}\) lies
  in the boundary.

  Now we prove the inclusion `\(\subseteq\)'.  Let \(b \in \Reals^n\)
  such that \(-\oprodsym{b} \eqqcolon C \in \Omega_M\).  Clearly
  \(\pnorm[\infty]{b} = 1\).  We may assume that \(\beta \coloneqq
  \iprodt{\ones}{b} \geq 0\) and that \(b_1 > 0\).
  Use~\eqref{eq:normal} to write \(C = \Diag(y) - S\) for some \(y \in
  \Reals^n\) and \(S \in \Psd{n}\) such that \(S\ones = 0\).  Then
  \(-\beta b = -\oprodsym{b}\ones = C\ones = y - S\ones = y\), so
  \begin{equation}
    \label{eq:43}
    0 \preceq S = \Diag(y) + \oprodsym{b} = -\beta \Diag(b) + \oprodsym{b}.
  \end{equation}
  We claim that
  \begin{equation}
    \label{eq:44}
    b_i < 0
    \qquad
    \forall i \in \supp(b) \setminus \set{1}.
  \end{equation}
  Indeed, by restricting~\eqref{eq:43} to a principal submatrix we get
  \begin{equation}
    \begin{bmatrix}
      b_1^2   & b_1 b_i \\
      b_1 b_i & b_i^2   \\
    \end{bmatrix}
    \succeq
    \beta
    \begin{bmatrix}
      b_1 & 0   \\
      0   & b_i \\
    \end{bmatrix}.
  \end{equation}
  If \(b_i > 0\), then the RHS is positive definite, whereas the LHS
  is singular.  This proves~\eqref{eq:44}.

  Suppose first that \(\card{\supp(b)} \leq 2\).  Then \(b = e_1 -
  \alpha e_j\) for some \(j \in \supp(b) \setminus \set{1}\) and
  \(\alpha \in [-1,1]\).  By~\eqref{eq:44}, we have \(\alpha \in
  [0,1]\), and so \(-\oprodsym{b}\) lies in the RHS of~\eqref{eq:42}.

  Suppose next that \(\card{\supp(b)} \geq 3\).  We must prove that
  \begin{equation}
    \label{eq:45}
    b \perp \ones.
  \end{equation}
  Suppose for the sake of contradiction that \(\beta > 0\).  Next let
  \(i,j \in \supp(b) \setminus \set{1}\) be distinct.  Again
  by~\eqref{eq:43} we get that the determinant of
  \begin{equation}
    \begin{bmatrix}
      b_1(b_1-\beta) & b_1 b_i        \\
      b_1 b_i        & b_i(b_i-\beta) \\
    \end{bmatrix}
    \succeq 0,
  \end{equation}
  is nonnegative.  This yields \(b_1 + b_i \leq \beta\) using \(\beta
  > 0\).  But
  \eqref{eq:44} implies that \(\beta \leq b_1 + b_i + b_j < b_1 +
  b_i\), contradiction.  This concludes the proof of~\eqref{eq:45},
  and hence \(-\oprodsym{b}\) lies in the RHS of~\eqref{eq:42}.
\end{proof}

Finally, we need to relate \(\Gscr_M\) with the vectors that appear in
the rank-one tensors.  A vector \(b \in \Reals^n\) is \emph{strictly
  balanced} if \(\abs{b_i} < \sum_{j \in [n]\setminus\set{i}}
\abs{b_j}\) for every \(i \in [n]\).  It is easy to verify that,
\begin{equation}
  \label{eq:46}
  \text{%
    if \(b \in \Reals^n\) and \(i \in [n]\) is such that \(\abs{b_i} =
    \pnorm[\infty]{b}\), then \(b\) is strictly balanced \(\iff
    \abs{b_i} < \textstyle\sum_{j \in [n] \setminus \set{i}} \abs{b_j}\).
  }
\end{equation}
We shall rely on yet another result by Laurent and Poljak:
\begin{theorem}[{\cite[Theorem~2.6]{LaurentP96a}}]
  \label{thm:strictly-balanced}
  Let \(b \in \Reals^n\) such that \(b \perp \ones\) and \(\supp(b) =
  [n]\).  Then there exists \(X \in \Elliptope{n}\) such that
  \(\Null(X) = \linspan\set{b}\) if and only if \(b\) is strictly
  balanced.
\end{theorem}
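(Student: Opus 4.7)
Both directions hinge on factoring an element $X \in \Elliptope{n}$ as $X = U^{\transp}U$ with $U \in \Reals^{k \times n}$ having unit-norm columns $u_1, \dotsc, u_n$; such a factorization exists because $X \succeq 0$ and $\diag(X) = \ones$. In this form, $Xb = 0$ is equivalent to $Ub = 0$, i.e., to $\sum_i b_i u_i = 0$, and $\rank(X) = \rank(U)$.

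For the forward direction, assume $X \in \Elliptope{n}$ satisfies $\Null(X) = \linspan\set{b}$, and factor as above. For each $i \in [n]$, isolating $u_i$ and applying the triangle inequality to $\sum_j b_j u_j = 0$ yields $\abs{b_i} \leq \sum_{j \neq i}\abs{b_j}$. The work is to rule out equality. Were equality to hold for some $i$, then all $b_j u_j$ with $j \neq i$ would be nonnegative multiples of a common unit vector $v$; since $b_j \neq 0$ by $\supp(b) = [n]$, this forces $u_j \in \set{\pm v}$ for every $j \neq i$. Substituting back into the relation and using $\norm{u_i} = 1$ then forces $u_i \in \set{\pm v}$ as well, so $\rank(U) = 1$ and hence $\rank(X) = 1$, contradicting $\rank(X) = n - 1 \geq 2$ (the statement is vacuous or degenerate for $n = 2$). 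Hence $b$ is strictly balanced.

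For the backward direction, assuming $b$ is strictly balanced, the plan is to build unit vectors $u_1, \dotsc, u_n \in \Reals^{n-1}$ with $\sum_i b_i u_i = 0$ and $\rank[u_1 \mid \dotsb \mid u_n] = n - 1$, and then take $X \coloneqq U^{\transp}U$. Setting $\sigma_i \coloneqq \sign(b_i)$ and $c_i \coloneqq \abs{b_i}$, strict balance is precisely the strict polygon inequality $c_i < \sum_{j \neq i} c_j$ for the lengths $c_i$, so a closed planar $n$-gon with these side lengths exists, providing unit vectors $w_1, \dotsc, w_n \in \Reals^2$ with $\sum_i c_i w_i = 0$. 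Setting $u_i^{(0)} \coloneqq \sigma_i w_i$ yields an initial configuration of rank $2$ satisfying $\sum_i b_i u_i^{(0)} = 0$. Embedding $\Reals^2 \subseteq \Reals^{n-1}$, I would then boost the rank by perturbing within the real algebraic variety $\mathcal{M} \coloneqq \setst{(u_1, \dotsc, u_n) \in (\Reals^{n-1})^n}{\norm{u_i} = 1\,\forall i\text{ and }\sum_i b_i u_i = 0}$, whose expected dimension is $n(n-2) - (n-1) = n^2 - 3n + 1$, providing ample tangent directions at $u^{(0)}$ to deform to a full-column-rank configuration. The resulting Gram matrix $X = U^{\transp}U$ lies in $\Elliptope{n}$ with $\Null(X) = \Null(U) = \linspan\set{b}$.

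The main obstacle is justifying the rank-boosting step rigorously: one must verify that $\mathcal{M}$ is smooth near $u^{(0)}$ and that the rank function attains $n-1$ on an open dense subset of $\mathcal{M}$. An alternative fully explicit route is induction on $n$, starting from a direct construction for $n = 4$ (four unit vectors placed as vertices of a suitable configuration on the two-sphere) and, at each induction step, splitting $b$ into two parts with disjoint supports whose polygon inequalities remain strict, solving each smaller problem in orthogonal subspaces, and gluing; the delicate point here is arranging such a good split, which requires a continuity argument leveraging strict balance.
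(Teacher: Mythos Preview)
The paper does not supply a proof of this statement; it is quoted verbatim from \cite[Theorem~2.6]{LaurentP96a} and invoked as a black box in the proof of Proposition~\ref{prop:sc-iff-sb}. So there is no in-paper argument to compare against.

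On the merits of your attempt: the forward direction is complete and correct. For the backward direction, both of your sketches point the right way, and you honestly flag the gaps. The perturbation route can be made rigorous, but the work you still owe is not small: you must verify that $\mathcal{M}$ is smooth at the planar configuration $u^{(0)}$ (i.e., that the differentials of the $n+(n-1)$ defining equations are independent there, which uses nondegeneracy of the polygon) and then argue that the rank-deficient locus is a proper closed subvariety of the component of $\mathcal{M}$ through $u^{(0)}$. Your inductive route closes faster, and the ``good split'' you worry about is not needed. Order $c_i \coloneqq \abs{b_i}$ so that $c_1 \geq \cdots \geq c_n$, and replace the two smallest lengths by a single $c'$ with
\[
\max\Bigl\{\,c_{n-1}-c_n,\ c_1 - \sum_{i=2}^{n-2} c_i\,\Bigr\}
< c' <
\min\Bigl\{\,c_{n-1}+c_n,\ \sum_{i=1}^{n-2} c_i\,\Bigr\};
\]
strict balance of $b$ together with $n \geq 4$ makes this interval nonempty, and one checks directly that both the reduced tuple $(c_1,\dotsc,c_{n-2},c')$ and the triple $(c_{n-1},c_n,c')$ are strictly balanced. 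Induction yields spanning unit vectors $u_1,\dotsc,u_{n-2},u' \in \Reals^{n-2}$; after embedding in $\Reals^{n-1}$, split $c'u'$ as $c_{n-1}u_{n-1}+c_n u_n$ via a nondegenerate triangle in the plane $\linspan\set{u',e_{n-1}}$, thereby picking up the extra dimension. The base case $n=3$ is the planar triangle you already described.
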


\begin{proposition}
  \label{prop:sc-iff-sb}
  Let \(b \in \Reals^n\) such that \(b \perp \ones\) and \(\supp(b) =
  [n]\).  Then strict complementarity holds for~\eqref{eq:maxcut-sdp}
  with \(C = -\oprodsym{b}\) if and only if \(b\) is strictly
  balanced.
\end{proposition}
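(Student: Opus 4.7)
The plan is to reduce the question to an application of \Cref{thm:strictly-balanced} by pinning down the unique dual optimal solution and extracting the nullspace constraint that strict complementarity places on any primal optimizer.

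First, I would determine the optimal face on both sides for $C = -\oprodsym{b}$. Since $b \perp \ones$, for every $X \in \Elliptope{n}$ we have $\iprod{C}{X} = -\qform{X}{b} \leq 0$, with equality at $X = \oprodsym{\ones}$; hence the primal optimal value is $0$. On the dual side, taking $y^* \coloneqq 0$ yields $S^* \coloneqq \Diag(y^*) - C = \oprodsym{b} \succeq 0$ with objective $\iprodt{\ones}{y^*} = 0$, matching the primal value. By~\Cref{cor:unique-maxcut}, $\dualpair{y^*}{S^*}$ is the unique dual optimal solution.

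Next I would translate strict complementarity into a nullspace condition on the primal. By definition, strict complementarity holds if and only if there exists an optimal primal solution $X$ with $X + S^* \succ 0$, equivalently $\Null(X) \cap \Null(S^*) = \set{0}$. Since $\Null(S^*) = \set{b}^{\perp}$ and complementary slackness forces $\Image(S^*) = \linspan\set{b} \subseteq \Null(X)$ for every primal optimum $X$, we can decompose $\Null(X) = \linspan\set{b} \oplus (\Null(X) \cap \set{b}^{\perp})$; the strict-complementarity condition thus collapses to $\Null(X) = \linspan\set{b}$. Conversely, any $X \in \Elliptope{n}$ with $\Null(X) = \linspan\set{b}$ automatically satisfies $XS^* = 0$ (hence is primal optimal) and $\Null(X) \cap \Null(S^*) = \linspan\set{b} \cap \set{b}^{\perp} = \set{0}$ (since $b \neq 0$ from $\supp(b) = [n]$).

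Therefore strict complementarity holds for~\eqref{eq:maxcut-sdp} with $C = -\oprodsym{b}$ if and only if some $X \in \Elliptope{n}$ satisfies $\Null(X) = \linspan\set{b}$. Invoking~\Cref{thm:strictly-balanced} (whose hypotheses $b \perp \ones$ and $\supp(b) = [n]$ are exactly those given) closes the equivalence with $b$ being strictly balanced. There is no real obstacle here: the only subtle point is verifying that complementary slackness does force $\linspan\set{b} \subseteq \Null(X)$ for every primal optimum, which is immediate from $XS^* = 0$ and $\Image(S^*) = \linspan\set{b}$.
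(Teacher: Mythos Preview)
Your proof is correct and reaches the same endpoint (\Cref{thm:strictly-balanced}) as the paper, but by a more elementary route. The paper invokes the abstract characterization \Cref{prop:sc-ri}, writes \(-\oprodsym{b} \in \ri(\Normal{\Elliptope{n}}{X})\) as \(\Diag(y) - S\) with \(S \in \ri(\Psd{n} \cap \set{X}^{\perp})\) via~\eqref{eq:5}, and then argues (using \(y = S\ones\) and optimality of \(X\)) that \(y = 0\); this forces \(S = \oprodsym{b}\) and hence \(\Image(\oprodsym{b}) = \Null(X)\). You instead bypass the normal-cone formulation entirely: you identify the unique dual optimum \((0,\oprodsym{b})\) directly via \Cref{cor:unique-maxcut}, and then read off the strict-complementarity condition \(\Null(X) \cap \set{b}^{\perp} = \set{0}\) together with \(b \in \Null(X)\) from complementary slackness, obtaining \(\Null(X) = \linspan\set{b}\) in one step. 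Your argument is shorter and self-contained (it does not need \Cref{prop:sc-ri} or the normal-cone formulas~\eqref{eq:5}); the paper's approach, on the other hand, illustrates how the general machinery of \Cref{prop:sc-ri} specializes here, which fits the narrative of the surrounding section.
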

\begin{proof}
  Note that \(\oprodsym{\ones}\) is an optimal solution
  for~\eqref{eq:maxcut-sdp} if \(C = -\oprodsym{b}\).  By Proposition~\ref{prop:sc-ri}, we must
  show that existence of \(X \in \Elliptope{n}\) such that
  \(-\oprodsym{b} \in \ri(\Normal{\Elliptope{n}}{X})\) is equivalent
  to strict balancedness of~\(b\).  We will show that, for each \(X
  \in \Elliptope{n}\),
  \begin{equation}
    \label{eq:reduction-balancedness}
    -\oprodsym{b} \in \ri(\Normal{\Elliptope{n}}{X})
    \iff
    \oprodsym{b} \in \setst{Z \in \Psd{n}}{\Image(Z)=\Null(X)}.
  \end{equation}
  Since existence of \(X \in \Elliptope{n}\) such that the RHS
  of~\eqref{eq:reduction-balancedness} holds is equivalent to \(b\)
  being strictly balanced by Theorem~\ref{thm:strictly-balanced}, the
  result will follow.

  The proof of sufficiency in~\eqref{eq:reduction-balancedness}
  follows from~\eqref{eq:5} and \(\ri(\Psd{n} \cap \set{X}^{\perp}) =
  \setst{Z \in \Psd{n}}{\Image(Z) = \Null(X)}\).  For the proof of
  necessity, recall~\eqref{eq:5} and suppose that there exists \(X \in
  \Elliptope{n}\) such that \(-\oprodsym{b} = \Diag(y) - S\) for some
  \(y \in \Reals^n\) and \(S \in \ri(\Psd{n} \cap \set{X}^{\perp})\).
  Then \(0 = -\oprodsym{b}\ones = (\Diag(y)-S)\ones=y-S\ones\) shows
  that
  \begin{equation}
    \label{eq:47}
    y=S\ones.
  \end{equation}
  Since \(X\) and \(\oprodsym{\ones}\) are optimal solutions
  for~\eqref{eq:maxcut-sdp}, we find that \(0 =
  \Tr(-\oprodsym{b}\oprodsym{\ones}) = \Tr(-\oprodsym{b}X) =
  \iprodt{y}{\diag(X)}-\Tr(SX)\) so \(\iprodt{\ones}{y} = \Tr(SX) =
  0\).  By~\eqref{eq:47}, \(\qform{S}{\ones} = \iprodt{\ones}{y} =
  0\), so \(\ones \in \Null(S)\) and \(y = 0\).
\end{proof}

We are now in position to present the main result of this section:
\begin{theorem}
  \label{thm:rank-one}
  Let \(n \geq 4\) be an integer.  Equip \(\Sym{n}\) with the norm
  \(\pnorm[\infty]{\matvec(\cdot)}\).  Set
  \begin{gather*}
    \Omega_M
    \coloneqq
    \setst{
      C \in \bd(\Normal{\Elliptope{n}}{\oprodsym{\ones}})
    }{
      C \preceq 0,\,
      \rank(C) = 1,\,
      \pnorm[\infty]{\matvec(C)} = 1
    }
    \subseteq \Sym{n},
    \\
    \Gscr_M
    \coloneqq
    \setst{
      C \in \Omega_M
    }{
      \text{strict complementarity holds for~\eqref{eq:maxcut-sdp} with~\(C\)}
    },
    \\
    d \coloneqq \dim_H(\Omega_M).
  \end{gather*}
  Let \(\Sigma_d\) be the \(\sigma\)-algebra of \(H_d\)-measurable
  subsets of~\(\Sym{n}\) and set \(\Sigma_M \coloneqq \setst{\Ascr_M
    \in \Sigma_d}{\Ascr_M \subseteq \Omega_M}\).  Then
  \begin{enumerate}[(i)]
    \setlength{\itemsep}{4pt}
  \item \(\Omega_M \in \Sigma_d\) and \(\Gscr_M \in \Sigma_M\),
  \item \(H_{n-2}(\Omega_M) \in (0,\infty)\), so \(d = n-2\),
  \item \(H_d(\Gscr_M) > 0\) and \(H_d(\overline{\Gscr_M}) > 0\),
    where \(\overline{\Gscr_M} \coloneqq \Omega_M \setminus \Gscr_M\).
  \end{enumerate}
  In~particular, if we set
  \begin{equation}
    \label{eq:48}
    \Prob_M(\Ascr_M)
    \coloneqq
    \frac{H_d(\Ascr_M)}{H_d(\Omega_M)}
    \qquad
    \forall \Ascr_M \in \Sigma_M,
  \end{equation}
  then \((\Omega_M,\Sigma_M,\Prob_M)\) is a probability space and the
  event \(\Gscr_M\) satisfies \(\Prob_M(\Gscr_M) \in (0,1)\).
\end{theorem}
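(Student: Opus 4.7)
The plan is to transfer the problem from the matrix space $\Sym{n}$ to the vector space $\Reals^n$ via the bi-Lipschitz parametrizations $\varphi_s$ of Theorem~\ref{thm:bi-Lipschitz-tensor}, and then use Proposition~\ref{prop:omega} and Proposition~\ref{prop:sc-iff-sb} to translate statements about $\Omega_M$ and $\Gscr_M$ into statements about explicit subsets of $\Reals^n$. Measurability in~(i) then follows once we observe that both $\Omega_M$ and $\Gscr_M$ are Borel in $\Sym{n}$: the former is cut out by closed and rank constraints, and the latter, after pullback through the $\varphi_s$, is cut out by the strict-inequality condition of strict balancedness together with a support condition; since $H_d$ is a metric outer measure, every Borel set lies in~$\Sigma_d$.

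For~(ii), use Proposition~\ref{prop:omega} to decompose $\Omega_M = \Omega_M^{\mathrm{edge}} \cup \Omega_M^{\perp}$. The edge part is a Lipschitz image of finitely many line segments, so $\dim_H(\Omega_M^{\mathrm{edge}}) \leq 1$ and $H_{n-2}(\Omega_M^{\mathrm{edge}}) = 0$ since $n - 2 \geq 2$. The hyperplane part~$\Omega_M^{\perp}$ is, orthant by orthant, the $\varphi_s$-image of
\[
\mathcal{B} \coloneqq \setst{b \in \bd(\Ball_\infty)}{b \perp \ones,\ \abs{\supp(b)} \geq 3} \subseteq \Reals^n.
\]
Writing $H \coloneqq \set{\ones}^{\perp}$, the set $H \cap \Ball_\infty$ is an $(n-1)$-dimensional compact convex body, so its relative boundary $H \cap \bd(\Ball_\infty)$ has $H_{n-2}$-measure in $(0,\infty)$ by Theorem~\ref{thm:dimH-bd-compact}. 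The stratum $\abs{\supp(b)} < 3$ lies in a finite union of at most $2$-dimensional subspaces, hence is $H_{n-2}$-negligible (again using $n - 2 \geq 2$), so $H_{n-2}(\mathcal{B}) \in (0,\infty)$. Corollary~\ref{cor:bi-Lipschitz} applied to each $\varphi_s$ transfers this back to~$\Omega_M^{\perp}$, and thus to $\Omega_M$, giving $d = n - 2$ by~\eqref{eq:18}.

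For~(iii), by Proposition~\ref{prop:sc-iff-sb}, modulo the $H_{n-2}$-null subset of $\mathcal{B}$ where some coordinate vanishes, $\Gscr_M$ corresponds under the $\varphi_s$ to the strictly balanced vectors in~$\mathcal{B}$, and $\overline{\Gscr_M}$ to the non-strictly-balanced ones. Two explicit witnesses of positive measure then suffice. In the orthant $b_1 = 1$, $b_j \leq 0$ for $j \geq 2$, the slice $\set{b \perp \ones}$ is the $(n-2)$-dimensional simplex $\setst{b}{b_1 = 1,\ b_j \leq 0,\ \sum_{j \geq 2} b_j = -1}$; on its relative interior $\sum_{j \geq 2}\abs{b_j} = 1 = \abs{b_1}$, so~\eqref{eq:46} shows $b$ fails strict balancedness throughout, yielding a positive-measure subset of~$\overline{\Gscr_M}$. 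Conversely, the vector $b^{\ast} \coloneqq e_1 + e_2 - \tfrac{2}{n-2}(e_3 + \dotsb + e_n)$ lies in~$\mathcal{B}$ and satisfies $1 = \abs{b^{\ast}_1} < \sum_{j \neq 1}\abs{b^{\ast}_j} = 3$, hence is strictly balanced by~\eqref{eq:46}; since strict balancedness is an open condition on~$\mathcal{B}$, a relative neighborhood of~$b^{\ast}$ provides the required positive-measure subset of~$\Gscr_M$. Transporting back through the bi-Lipschitz $\varphi_s$ and invoking Corollary~\ref{cor:bi-Lipschitz} gives $H_d(\Gscr_M), H_d(\overline{\Gscr_M}) > 0$, and the probability-space statement is then a formality.

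The main delicate point is checking that all of the ``exceptional strata'' — the edge part of~$\Omega_M$, the low-support locus in~$\mathcal{B}$, and the chamber boundaries where distinct $\varphi_s$'s overlap — carry zero $H_{n-2}$-measure, so that the generic $(n-2)$-dimensional pieces actually control the measures of~$\Omega_M$, $\Gscr_M$, and~$\overline{\Gscr_M}$. The hypothesis $n \geq 4$ enters precisely to guarantee $n - 2 \geq 2$, strictly larger than the dimensions of all these excluded strata.
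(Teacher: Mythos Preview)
Your approach is essentially the same as the paper's: both transfer the problem to \(\Reals^n\) via the bi-Lipschitz maps \(\varphi_s\), invoke Proposition~\ref{prop:omega} and Proposition~\ref{prop:sc-iff-sb}, dispose of the low-support and edge strata as \(H_{n-2}\)-null, and exhibit explicit witnesses on each side (the paper uses \(\xcirc = 1 \oplus \tfrac{1}{n-1} \oplus \tfrac{-n}{(n-1)(n-2)}\ones\) and \(\ycirc = 1 \oplus -\tfrac{1}{n-1}\ones\) with explicit \(\infty\)-balls, where your \(b^*\) and simplex serve the same role). One small imprecision: you assert \(\Gscr_M\) is Borel, but Proposition~\ref{prop:sc-iff-sb} only determines \(\Gscr_M\) on the full-support locus, so what you actually establish is that \(\Gscr_M\) agrees with a Borel set modulo an \(H_d\)-null set---this is exactly what the paper does (writing \(\Gscr_M = \Nscr \cup F_\sigma\)) and it suffices for \(\Gscr_M \in \Sigma_M\) by completeness of \(H_d\), but it does not literally give Borelness.
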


\begin{proof}
  \newcommand*{\Bbal}{\Bscr_{\textrm{bal}}}
  \newcommand*{\BbalmU}{\Bscr_{\textrm{bal},m,U}}
  \newcommand*{\GM}{\Gscr_M}
  \newcommand*{\GV}{\Gscr_V}
  \newcommand*{\Zi}{\Zscr_i}
  \newcommand*{\Znot}{\Zscr_{\emptyset}}
  We start by proving that
  \begin{equation}
    \label{eq:49}
    \Omega_M \in \Sigma_d.
  \end{equation}
  By standard Hausdorff measure theory, \(\Sigma_d\) contains every
  Borel set of~\(\Sym{n}\); see, e.g., \cite[Theorem~27]{Rogers98a}.
  Recall that the Borel sets of~\(\Sym{n}\) are the elements of the
  smallest \(\sigma\)-algebra on~\(\Sym{n}\) that contains all the
  open subsets of~\(\Sym{n}\).  For distinct \(i,j \in [n]\), set
  \(\Bscr_{ij} \coloneqq e_i - [0,1]e_j\).  For each \(S \in
  \tbinom{[n]}{3}\) and \(m \in \Naturals \setminus \set{0}\), define
  \begin{equation*}
    \Bscr_{S,m}
    \coloneqq
    \setst{
      b \in \Reals^n
    }{
      b \perp \ones,\,
      \pnorm[\infty]{b} = 1,\,
      \abs{b_i} \geq \tfrac{1}{m}\,\forall i \in S
    }.
  \end{equation*}
  Clearly, each \(\Bscr_{ij}\) and \(\Bscr_{S,m}\) is compact.  Let
  \(\varphi \colon b \in \Reals^n \mapsto -\oprodsym{b} \in \Sym{n}\).
  By \Cref{prop:omega},
  \begin{equation}
    \label{eq:50}
    \Omega_M
    =
    \bigcup_{i \in [n]}
    \bigcup_{j \in [n]\setminus\set{i}}
    \varphi(\Bscr_{ij})
    \cup
    \bigcup_{m=1}^{\infty}
    \bigcup_{S \in \tbinom{[n]}{3}}
    \varphi(\Bscr_{S,m}).
  \end{equation}
  Since each \(\varphi(\Bscr_{ij})\) and each \(\varphi(\Bscr_{S,m})\)
  is compact, \eqref{eq:50} shows that \(\Omega_M\) is an
  \(F_{\sigma}\), i.e.,~a~countable union of closed sets, and hence a
  Borel set.  This proves~\eqref{eq:49}.

  Next we prove that
  \begin{equation}
    \label{eq:51}
    H_{n-2}(\Omega_M) \in (0,\infty)
  \end{equation}
  from which it will follow via~\eqref{eq:18} that
  \begin{equation}
    \label{eq:52}
    d = n-2.
  \end{equation}
  Again we shall use \Cref{prop:omega}.  By \Cref{cor:bi-Lipschitz}
  and \Cref{thm:bi-Lipschitz-tensor},
  \begin{equation}
    \label{eq:53}
    H_1\paren[\Big]{
      \bigcup_{i \in [n]}
      \bigcup_{j \in [n]\setminus\set{i}}
      \varphi(\Bscr_{ij})
    }
    \in (0,\infty).
  \end{equation}
  Moreover,
  \begin{equation*}
    \Omega_M
    \supseteq
    \setst{
      -\oprodsym{b}
    }{
      b = -1 \oplus c,\,
      c \in \Reals_+^{n-1},\,
      \iprodt{\ones}{c} = 1
    }
    \implies
    H_{n-2}(\Omega_M) > 0.
  \end{equation*}
  For each \(s \in \set{\pm1}^n\) and \(i \in [n]\), the polytope
  \(\Bscr_{s,i} \coloneqq \setst{b \in \Reals_s^n}{b \perp
    \ones,\, -\ones\leq b\leq\ones,\, b_i = s_i}\) has dimension less
  than or equal to \(n-2\).  Since
  \begin{equation*}
    \Omega_M
    \subseteq
    \Nscr \cup
    \bigcup_{s \in \set{\pm1}^n} \bigcup_{i \in [n]} \varphi(\Bscr_{s,i})
  \end{equation*}
  for some set \(\Nscr\) of zero \(d\)-dimensional Hausdorff measure,
  and each \(\varphi(\Bscr_{s,i})\) has finite \(d\)-dimensional
  Hausdorff measure by \Cref{cor:bi-Lipschitz} and
  \Cref{thm:bi-Lipschitz-tensor}, the proof of~\eqref{eq:51} is
  complete.

  In the remainder of the proof we shall use subsets of~\(\Reals^n\)
  with constraints on the coordinates that are zero:
  \begin{equation*}
    \Zi
    \coloneqq
    \setst{b \in \Reals^n}{b_i = 0}
    \quad
    \forall i \in [n],
    \qquad\text{and}\qquad
    \Znot
    \coloneqq
    \Reals^n \setminus \bigcup_{i \in [n]} \Zi
    =
    \setst{b \in \Reals^n}{\supp(b) = [n]}.
  \end{equation*}
  Define also
  \begin{gather*}
    \Omega_V
    \coloneqq
    \setst{
      b \in \Reals^n
    }{
      b \perp \ones,\,
      \card{\supp(b)} \geq 3,\,
      \pnorm[\infty]{b} = 1
    },
    \\
    \GV
    \coloneqq
    \setst{
      b \in \Omega_V
    }{
      -\oprodsym{b} \in \Gscr_M
    },
    \\
    \overline{\GV}
    \coloneqq
    \Omega_V \setminus \GV,
    \\
    \Bbal
    \coloneqq
    \setst{
      b \in \Omega_V
    }{
      b \text{ is strictly balanced}
    },
    \\
    \overline{\Bbal}
    \coloneqq
    \Omega_V \setminus \Bbal.
  \end{gather*}
  \Cref{prop:sc-iff-sb} implies that
  \begin{gather}
    \label{eq:54}
    \GV \cap \Znot = \Bbal \cap \Znot,
    \\
    \overline{\GV} \cap \Znot = \overline{\Bbal} \cap \Znot.
  \end{gather}
  For each \(i \in [n]\), we have \(\GV \cap \Zi \subseteq
  \Omega_V \cap \Zi\) and the set on the RHS has zero
  \(d\)-dimensional Hausdorff measure.  Hence,
  \begin{equation}
    \label{eq:55}
    H_d(\GV \cap \Zi) = 0
    \qquad
    \forall i \in [n].
  \end{equation}
  Define \(\varphi_s\) as in~\eqref{eq:41} for each \(s \in
  \set{\pm1}^n\).  By putting together \eqref{eq:53}, \eqref{eq:55},
  and~\eqref{eq:54}, we find that
  \begin{equation}
    \label{eq:56}
    \GM
    =
    \Nscr \cup \bigcup_{s \in \set{\pm1}^n} \varphi_s(\GV \cap \Znot \cap \Reals_s^n)
    =
    \Nscr \cup \bigcup_{s \in \set{\pm1}^n} \varphi_s(\Bbal \cap \Znot \cap \Reals_s^n)
  \end{equation}
  for some subset \(\Nscr \subseteq \Omega_M\) such that \(H_d(\Nscr) = 0\).

  Let us prove that
  \begin{equation}
    \label{eq:57}
    \GM \in \Sigma_M.
  \end{equation}
  For each \(m \in \Naturals \setminus \set{0}\) and each \(U \in \tbinom{[n]}{3}\),
  define
  \begin{equation*}
    \BbalmU
    \coloneqq
    \setst[\Big]{
      b \in \Reals^n
    }{
      b \perp \ones,\,
      \pnorm[\infty]{b} = 1,\,
      \abs{b_i} \geq \tfrac{1}{m}\,\forall i \in U,\,
      \abs{b_i} + \tfrac{1}{m} \leq \sum_{j \in [n]\setminus\set{i}} \abs{b_j}
      \,\forall i \in [n]
    }.
  \end{equation*}
  Clearly, \(\Bbal = \bigcup_{m=1}^{\infty} \bigcup_{U \in \tbinom{[n]}{3}} \BbalmU\).  Hence,
  by~\eqref{eq:56},
  \begin{equation}
    \label{eq:58}
    \GM = \Nscr \cup \bigcup_{m=1}^{\infty} \bigcup_{U \in
      \tbinom{[n]}{3}} \bigcup_{s \in \set{\pm1}^n}
    \varphi_s(\BbalmU \cap \Znot \cap \Reals_s^n).
  \end{equation}
  Since each \(\varphi_s(\BbalmU \cap \Znot \cap \Reals_s^n)\) is
  compact, it follows that \(\GM\) is the union of a null set with
  an~\(F_{\sigma}\), and hence \(\GM \in \Sigma_d\).  This
  proves~\eqref{eq:57}.

  Set
  \begin{equation*}
    \xcirc \coloneqq 1 \oplus \frac{1}{n-1} \oplus \frac{-n}{(n-1)(n-2)} \ones
    \in \Reals^n,
    \quad
    \eps \coloneqq \frac{3}{4(n-1)(n-2)},
    \quad
    \text{and}
    \quad
    s(x) \coloneqq 1 \oplus 1 \oplus -\ones \in \set{\pm1}^n.
  \end{equation*}
  It is not hard to verify that
  \begin{equation}
    \label{eq:59}
    \xcirc + \eps(\Ball_{\infty} \cap \set{e_1,\ones}^{\perp})
    \subseteq
    \Bbal \cap \Znot \cap \Reals_{s(x)}^n.
  \end{equation}
  Since the set in the LHS of~\eqref{eq:59} has positive
  \(d\)-dimensional measure, so does the set in the RHS of~\eqref{eq:59},
  whence
  \begin{equation}
    \label{eq:60}
    H_d(\GM) > 0
  \end{equation}
  by \Cref{cor:bi-Lipschitz}, \Cref{thm:bi-Lipschitz-tensor},
  and~\eqref{eq:56}.

  Set
  \begin{equation*}
    \ycirc \coloneqq 1 \oplus -\frac{1}{n-1}\ones \in \Reals^n,
    \quad
    \delta \coloneqq \frac{1}{2(n-1)},
    \quad
    \text{and}
    \quad
    s(y) \coloneqq 1 \oplus -\ones \in \set{\pm1}^n.
  \end{equation*}
  It is not hard to verify that
  \begin{equation}
    \label{eq:61}
    \ycirc + \delta(\Ball_{\infty} \cap \set{e_1,\ones}^{\perp})
    \subseteq
    \overline{\Bbal} \cap \Znot \cap \Reals_{s(y)}^n.
  \end{equation}
  Hence,
  \begin{equation*}
    \overline{\GM}
    \supseteq
    \varphi(\overline{\GV} \cap \Znot)
    =
    \varphi(\overline{\Bbal} \cap \Znot)
    \supseteq
    \varphi_{s(y)}\paren{
      \overline{\Bbal} \cap \Znot \cap \Reals_{s(y)}^n
    }
    \supseteq
    \varphi_{s(y)}\paren{
      \ycirc + \delta(\Ball_{\infty} \cap \set{e_1,\ones}^{\perp})
    }.
  \end{equation*}
  Thus,
  \begin{equation*}
    H_d(\overline{\GM}) > 0
  \end{equation*}
  by \Cref{cor:bi-Lipschitz} and \Cref{thm:bi-Lipschitz-tensor}.
\end{proof}

\section{Conclusion}

We proved in \Cref{sec:generic-sc-failure} that the MaxCut
SDP~\eqref{eq:maxcut-sdp} has the worst possible behavior with respect
to strict complementarity when the objective function is in the
boundary of the normal cone of the elliptope at any of its vertices.
At a first glance, this may seem surprising since the MaxCut SDP is so
elementary and has so many favorable
properties.  However, as we explain next, from a properly chosen
viewpoint this bad behavior is not so surprising.

Consider, for instance, the convex set \(\Cscr \subseteq \Reals^2\) in
\Cref{fig:football}.  For concreteness, an explicit description
of~\(\Cscr\) is given by
\begin{equation}
  \label{eq:62}
  \Cscr
  \coloneqq
  \setst{x \in \Reals^2}{\norm{x}+\abs{x_1}\leq 1}
  =
  \setst[\big]{
    x \in \Reals^2
  }{
    \abs{x_1}\leq1/2,\,\abs{x_2} \leq \sqrt{1-2\abs{x_1}}\,
  },
\end{equation}
and it is not hard to show that \(\Cscr\) is the projection of the
feasible region of an SDP.  It is intuitive and simple to verify that
\(\ones\) lies in (the boundary of) the normal cone of~\(\Cscr\) at
its vertex~\(e_2\), but \(\ones\) is not in the relative interior of
any normal cone of~\(\Cscr\).  We can trace this phenomenon to the
smooth, nonpolyhedral boundary of~\(\Cscr\) around~\(e_2\).  It is
straightforward to extend this example to~\(\Reals^3\) by considering
the solid of revolution obtained by rotating~\(\Cscr\) around the
\(e_2\) axis, i.e., an American football.

\begin{figure}
  \centering
  \begin{tikzpicture}

    \draw[fill=gray!30!white]
    plot [scale=1,domain=-0.5:0,smooth,samples=200,variable=\x] ({\x},{sqrt(1+2*\x)}) --
    plot [scale=1,domain=0:-0.5,smooth,samples=200,variable=\x,rotate=180] ({\x},{-sqrt(1+2*\x)}) --
    plot [scale=1,domain=-0.5:0,smooth,samples=200,variable=\x,rotate=180] ({\x},{sqrt(1+2*\x)}) --
    plot [scale=1,domain=0:-0.5,smooth,samples=200,variable=\x] ({\x},{-sqrt(1+2*\x)});

    \shade[bottom color=gray!30, top color=white] (0,1) -- (1.2,2.2) -- (-1.2,2.2) -- cycle;

    \draw[->, help lines, dashed, font=\scriptsize] (-1.2,0) -- (1.2,0) node[right] {$x_1$};
    \draw[->, help lines, dashed, font=\scriptsize] (0,-1.2) -- (0,2.2) node[above] {$x_2$};
    \foreach \x in {-1,1} {
      \draw[help lines,font=\scriptsize] (\x,2pt)--(\x,-2pt) node[below] {$\x$};
    }

    \draw[scale=1,domain=-0.5:0,smooth,samples=200,variable=\x] plot ({\x},{sqrt(1+2*\x)});
    \draw[scale=1,domain=-0.5:0,smooth,samples=200,variable=\x,rotate=180] plot ({\x},{sqrt(1+2*\x)});
    \draw[scale=1,domain=-0.5:0,smooth,samples=200,variable=\x] plot ({\x},{-sqrt(1+2*\x)});
    \draw[scale=1,domain=-0.5:0,smooth,samples=200,variable=\x,rotate=180] plot ({\x},{-sqrt(1+2*\x)});

    \draw[->, dashed] (0,1) -- (1.2,2.2);
    \draw[->, dashed] (0,1) -- (-1.2,2.2);

    \draw[->, thick] (0,1) -- (0.5,1.5) node[below right] {\(\mathbbm{1}\)};

    \node at (0,-0.5) {\(\mathscr{C}\)};
    \node[font=\scriptsize] at (0,1.8) {\(\mathcal{N}(\mathscr{C};e_2)\)};

  \end{tikzpicture}
  \caption{The set \(\Cscr\) defined in~\eqref{eq:62} and its normal cone \(\Ncal(\Cscr;e_2)\) at \(e_2\).}
  \label{fig:football}
\end{figure}

The elliptope looks somewhat similar to~\(\Cscr\) in the following
sense.  Let us consider the projection \(\Elliptope{n}' \subseteq
\Reals^{\tbinom{n}{2}}\) of the elliptope \(\Elliptope{n}\) into its
off-diagonal entries.  For \(n \geq 3\), the set \(\Elliptope{n}'\) is
a compact nonpolyhedral convex set with \(2^{n-1}\) vertices by
\Cref{thm:vertices-elliptope}.  Intuitively, \(\Elliptope{n}'\) can be
thought of as being obtained from the polytope which is the convex
hull of these \(2^{n-1}\) vertices by inflating it like a balloon,
while preserving the vertices fixed.  (In~fact, by
\cite[Proposition~2.9]{LaurentP95a}, the line segments between the
\(2^{n-1}\) vertices are also kept fixed.)  In~this~way,
\(\Elliptope{n}'\) is a round, plump convex set, whose boundary is
smooth almost everywhere, and the neighborhood of \(\Elliptope{n}'\)
around any vertex looks like (a generalization of) what is depicted by
the set~\(\Cscr\) from the previous paragraph.  Thus, when one
considers that the elliptope around a vertex ``locally'' looks
like~\(\Cscr\) around~\(e_2\), the poor behavior of the MaxCut SDP
described in~\Cref{sec:generic-sc-failure} makes more intuitive sense.
The discussion above indicates a natural direction for future
research.  Namely, to extend \Cref{thm:generic-sc-failure} to more
general SDPs, by requiring the feasible region to be ``locally
nonpolyhedral'' around its vertices.

\end{document}